\numberwithin{equation}{section}
\numberwithin{figure}{section}
\theoremstyle{plain}
\newtheorem{thm}{\protect\theoremname}[section]
  \theoremstyle{plain}
  \newtheorem{prop}[thm]{\protect\propositionname}
  \theoremstyle{plain}
  \newtheorem{lem}[thm]{\protect\lemmaname}
  \theoremstyle{plain}
  \newtheorem{cor}[thm]{\protect\corollaryname}
  \theoremstyle{plain}
  \newtheorem{conjecture}[thm]{\protect\conjecturename}
  \theoremstyle{definition}
  \newtheorem{problem}[thm]{\protect\problemname}
\definecolor{magenta}{RGB}{30, 0, 50}
  \providecommand{\conjecturename}{Conjecture}
  \providecommand{\corollaryname}{Corollary}
  \providecommand{\lemmaname}{Lemma}
  \providecommand{\problemname}{Problem}
  \providecommand{\propositionname}{Proposition}
\providecommand{\theoremname}{Theorem}
\begin{document}

\title{Smooth approximation of Yang--Mills theory on $\mathbb{R}^{2}$:
a rough path approach}

%\author{Hideyasu Yamashita \\ {\normalsize{}Division of Liberal Arts and
%Sciences, Aichi-Gakuin University} \\ {\normalsize{}email: }\texttt{\normalsize{}yamasita@dpc.aichi-gakuin.ac.jp}}

\author{Hideyasu Yamashita}
\institute{Division of Liberal Arts and Sciences, Aichi-Gakuin University
 \\ \email{yamasita@dpc.aichi-gakuin.ac.jp}}
\maketitle
\newcommand{\hidable}[3]{#2}
\newcommand{\hidea}[1]{{#1}}
\newcommand{\hideb}[1]{{#1}}
\newcommand{\hidec}[1]{{#1}}
\newcommand{\hidep}[1]{{#1}}
\renewcommand{\hidec}[1]{}
\renewcommand{\hidep}[1]{}

\newcommand{\thlab}[1]{{\tt [#1]}}
\renewcommand{\thlab}[1]{}

\begin{comment}
basic \rule[0.5ex]{0.7\columnwidth}{2pt}
\end{comment}

\global\long\def\N{\mathbb{N}}
\global\long\def\C{\mathbb{C}}
\global\long\def\Z{\mathbb{Z}}
 \global\long\def\R{\mathbb{R}}
 \global\long\def\im{\mathrm{i}}

\global\long\def\di{\partial}
 \global\long\def\d{{\rm d}}

\global\long\def\ol#1{\overline{#1}}
\global\long\def\ul#1{\underline{#1}}
\global\long\def\ob#1{\overbrace{#1}}

\global\long\def\ov#1{\overline{#1}}
 %
\begin{comment}
\global\long\def\bbD{\mathbb{D}}
\end{comment}

\global\long\def\then{\Rightarrow}
 \global\long\def\Then{\Longrightarrow}

\begin{comment}
Greek \rule[0.5ex]{0.7\columnwidth}{2pt}
\end{comment}

\global\long\def\al{\alpha}
\global\long\def\de{\delta}
 \global\long\def\ep{\epsilon}
 \global\long\def\la{\lambda}
 \global\long\def\io{\iota}
 \global\long\def\th{\theta}
\global\long\def\si{\sigma}
 \global\long\def\om{\omega}

\global\long\def\De{\Delta}
 \global\long\def\Th{\Theta}
 \global\long\def\Om{\Omega}

\global\long\def\brho{\boldsymbol{\rho}}
\global\long\def\bDelta{\boldsymbol{\Delta}}
 \global\long\def\bmu{\boldsymbol{\mu}}
 \global\long\def\bchi{\boldsymbol{\chi}}
 \global\long\def\bOm{\boldsymbol{\Omega}}

\begin{comment}
calligraphic \rule[0.5ex]{0.7\columnwidth}{2pt}
\end{comment}

\global\long\def\cA{\mathcal{A}}
\global\long\def\cB{\mathcal{B}}
 \global\long\def\cC{\mathcal{C}}
 \global\long\def\cD{\mathcal{D}}
\global\long\def\cE{\mathcal{E}}
 \global\long\def\cF{\mathcal{F}}
 \global\long\def\cG{{\cal G}}
 \global\long\def\cH{\mathcal{H}}
 \global\long\def\cI{\mathcal{I}}
 \global\long\def\cJ{\mathcal{J}}
\global\long\def\cK{\mathcal{K}}
 \global\long\def\cL{\mathcal{L}}
 \global\long\def\cM{\mathcal{M}}
 \global\long\def\cN{\mathcal{N}}
 \global\long\def\cO{\mathcal{O}}
 \global\long\def\cP{\mathcal{P}}
 \global\long\def\cQ{\mathcal{Q}}
 \global\long\def\cR{\mathcal{R}}
 \global\long\def\cS{\mathcal{S}}
 \global\long\def\cT{\mathcal{T}}
 \global\long\def\cU{\mathcal{U}}
 \global\long\def\cV{\mathcal{V}}
 \global\long\def\cW{\mathcal{W}}
\global\long\def\cX{\mathcal{X}}
 \global\long\def\cY{\mathcal{Y}}
 \global\long\def\cZ{\mathcal{Z}}

\begin{comment}
script \rule[0.5ex]{0.7\columnwidth}{2pt}
\end{comment}

\global\long\def\scA{\mathscr{A}}
\global\long\def\scB{\mathscr{B}}
\global\long\def\scC{\mathscr{C}}
\global\long\def\scD{\mathscr{D}}
 \global\long\def\scE{\mathscr{E}}
 \global\long\def\scF{\mathscr{F}}
 \global\long\def\scI{\mathscr{I}}
 \global\long\def\scL{\mathscr{L}}
\global\long\def\scM{\mathscr{M}}
\global\long\def\scP{\mathscr{P}}
\global\long\def\scS{\mathscr{S}}
 \global\long\def\scT{\mathscr{T}}

\global\long\def\bbA{\mathbb{A}}
 \global\long\def\bbB{\mathbb{B}}
\global\long\def\bbK{\mathbb{K}}
\global\long\def\bbQ{\mathbb{Q}}
\global\long\def\bbT{\mathbb{T}}
 \global\long\def\bbU{\mathbb{U}}
\global\long\def\bbX{\mathbb{X}}
\global\long\def\bbY{\mathbb{Y}}
\global\long\def\bbW{\mathbb{W}}

\global\long\def\bB{\mathbf{B}}
 \global\long\def\bS{\boldsymbol{S}}
 \global\long\def\bU{{\bf U}}
\global\long\def\bX{\mathbf{X}}
\global\long\def\bY{\mathbf{Y}}
\global\long\def\bW{\mathbf{W}}

\global\long\def\fg{\mathfrak{g}}
\global\long\def\fs{\mathfrak{s}}
 \global\long\def\fD{\mathfrak{D}}
 \global\long\def\fK{\mathfrak{K}}
 \global\long\def\frad{\mathfrak{d}}

\global\long\def\hM{\hat{M}}

\global\long\def\rM{\mathrm{M}}
\global\long\def\fP{\mathfrak{P}}
\global\long\def\prj{\mathfrak{P}}

\begin{comment}
color \rule[0.5ex]{0.7\columnwidth}{2pt}
\end{comment}

\global\long\def\magenta#1{{\color{magenta}#1}}

\begin{comment}
\global\long\def\symb#1{{\color{red}#1}}
\end{comment}
{} \global\long\def\symb#1{#1}

\global\long\def\emhrb#1{\text{{\color{red}\huge{\bf #1}}}}
 \global\long\def\sy#1{{\color{blue}#1}}

\newcommand{\usuji}{\color[rgb]{0.7,0.4,0.4}} \newcommand{\usu}{\color[rgb]{0.5,0.2,0.1}}
\newenvironment{Usuji} {\begin{trivlist}   \item \usuji }  {\end{trivlist}}
\newenvironment{Usu} {\begin{trivlist}   \item \usu }  {\end{trivlist}} 

\newcommand{\term}[1]{{\em #1}}

\begin{comment}
roman \rule[0.5ex]{0.7\columnwidth}{2pt}
\end{comment}

\global\long\def\supp{{\rm supp}}
\global\long\def\dom{\mathrm{dom}}
\global\long\def\ran{\mathrm{ran}}
 \global\long\def\leng{\text{{\rm leng}}}
 \global\long\def\diam{\text{{\rm diam}}}
 \global\long\def\Leb{\text{{\rm Leb}}}
 \global\long\def\meas{\text{{\rm meas}}}
\global\long\def\sgn{{\rm sgn}}
 \global\long\def\Tr{{\rm Tr}}
 \global\long\def\spec{{\rm spec}}
 \global\long\def\Ker{{\rm Ker}}
 \global\long\def\Lip{{\rm Lip}}
 \global\long\def\HS{{\rm HS}}

\begin{comment}
probability notations \rule[0.5ex]{0.7\columnwidth}{2pt}
\end{comment}

\global\long\def\Prob{\mathbb{P}}
\global\long\def\Var{\mathrm{Var}}
\global\long\def\Cov{\mathrm{Cov}}
\global\long\def\Ex{\mathbb{E}}
 %
\begin{comment}
\global\long\def\aei{\text{ a.e.}\iota}
\end{comment}
{} %\newcommand{\F}{\mathbf{F}}
\global\long\def\Ae{{\rm a.e.}}
 \global\long\def\samples{\bOm}

\begin{comment}
Friz-Victoir notations \rule[0.5ex]{0.7\columnwidth}{2pt}
\end{comment}

\global\long\def\var{\textrm{{\rm var}}}
\global\long\def\hvar{\textrm{{\rm -var}}}
 \global\long\def\pvar{p\textrm{{\rm -var}}}

\begin{comment}
\global\long\def\Hol{\text{{\rm Höl}}}
\end{comment}
{} \global\long\def\Hol{\text{{\rm Höl}}}
 %
\begin{comment}
\global\long\def\hHol{\text{{\rm -Höl}}}
\end{comment}
{} \global\long\def\hHol{\text{{\rm -Höl}}}
 %
\begin{comment}
\global\long\def\pHol{1/p\text{{\rm -Höl}}}
\end{comment}
{} \global\long\def\pHol{1/p\text{{\rm -Höl}}}

\global\long\def\frakt{\mathfrak{t}}

\global\long\def\rpvar{\mathfrak{p}}
 \global\long\def\rpHol{\mathfrak{h}}

\begin{comment}
mathop: \rule[0.5ex]{0.7\columnwidth}{2pt}
\end{comment}

\newcommand{\slim}{\mathop{\mbox{s-lim}}} %
\begin{comment}
\global\long\def\slim{\text{{\rm s-lim}}}
\end{comment}
\begin{comment}
 \global\long\def\slim{\text{{\rm s-lim}}}
\end{comment}

\newcommand{\wlim}{\mathop{\mbox{w-lim}}}

%\newcommand{\limsub}{\mathop{\mbox{\rm lim-sub}}}

\begin{comment}
misc \rule[0.5ex]{0.7\columnwidth}{2pt}
\end{comment}

\global\long\def\upha{\upharpoonright}

\global\long\def\bOne{{\bf 1}}

\global\long\def\ket#1{|#1\rangle}
 \global\long\def\bra#1{\langle#1|}

\global\long\def\Disk{\mathbb{D}^{2}}
\global\long\def\hcG{\hat{\mathcal{G}}}
\global\long\def\sfC{\mathsf{C}}

\begin{comment}
\global\long\def\crv{\mathsf{c}}
\global\long\def\Crv{\mathsf{C}}
\end{comment}
{} \global\long\def\crv{\mathfrak{c}}
\global\long\def\Crv{\mathfrak{C}}
 \global\long\def\gE{\mathrm{e}}
 \global\long\def\Rot{{\rm Rot}}

\global\long\def\lll{|||}
 %
\begin{comment}
\global\long\def\III{\interleave}
\end{comment}

\newcommand{\iiia}[1]{{\left\vert\kern-0.25ex\left\vert\kern-0.25ex\left\vert #1
  \right\vert\kern-0.25ex\right\vert\kern-0.25ex\right\vert}}

\global\long\def\iii#1{\iiia{#1}}

\global\long\def\chchi{\check{\bchi}}
\global\long\def\chrho{\check{\brho}}
\begin{comment}
\global\long\def\Sf{\Xi}
\global\long\def\chSf{\check{\Sf}}
 
\end{comment}

\global\long\def\nmat{n_{{\rm mat}}}
 \global\long\def\moll{\cM}
 \global\long\def\besovorder{\mathfrak{s}}

\global\long\def\ptrans{{\rm \mathscr{U}}}
 \global\long\def\lift{{\rm lift}}

\global\long\def\bbm#1{\mathbbm{#1}}

\begin{comment}
\global\long\def\bbm#1{{\color{blue}{\color{cyan}\mathsf{#1}}}}
\end{comment}

\global\long\def\Sn{{\rm sig}}
 \global\long\def\gtrans{\cG}
 \global\long\def\Mat{{\rm Mat}}

\global\long\def\arc{\mathsf{c}}
 \global\long\def\Lasso{{\rm Lasso}}
 \global\long\def\rectlikedomain{\cR}
 \global\long\def\dissect{{\rm diss}}
 \global\long\def\totte{\gamma}

\global\long\def\bpi{\boldsymbol{\pi}}
 \global\long\def\axial{{\bf e}}

\newcommand{\displabel}[1]{\texttt{\textup{\tiny{}lab\%#1\%}}}

\sloppy
\begin{abstract}
In the context of rough path theory (RPT), the theories of Hairer
(2014) and Gubinelli--Imkeller--Perkowski (2015) (GIP theory) gave
new methods  for construction of  $\Phi_{3}^{4}$ model. Roughly, their results state
that a quantum field in a $\Phi_{3}^{4}$ model can be smoothly approximated.
Consider the following question: Can RPT be applied to quantum Yang--Mills
(YM) gauge field theories to show that any YM theory can be smoothly
approximated? In this paper we consider this problem in the simplest
case of Euclidean YM theory, i.e. YM on $\mathbb{R}^{2}$ with the
usual Euclidean metric, as a test case. We prove that a (quantum)
$SU(n)$ YM theory on $\mathbb{R}^{2}$ in axial gauge can be smoothly
approximated for some class of Wilson loops.
 While our study is inspired by the theories of Hairer and GIP, instead
we use the RPT framework of Friz--Victoir (2010) in proving the theorem.
\end{abstract}
Keywords: Yang--Mills theory, Rough path theory, Stochastic differential
equation, White noise, Littlewood--Paley theory. \\
MSC2010: 60H10, 60H40, 81T13.

\tableofcontents{}

\section{Introduction}

In the context of rough path theory (e.g. \cite{FV10b,FH14}), the
theory of regularity structure of Hairer \cite{Hai14}, and that of
paracontrolled distributions of Gubinelli, Imkeller and Perkowski
(GIP theory) \cite{GIP15} gave new methods of construction of models
of quantum scalar fields, including the $\Phi_{3}^{4}$ model \cite{CC13,Hai14,Hai15,MW16,MWX16}.
Their results are summarized very roughly in one sentence: A quantum
field in a $\Phi_{3}^{4}$ model, which is represented by a distribution-valued
random variable, can be approximated by smooth fields, which are $C^{\infty}$-vauled
random variables. Thus the following natural (and naive) questions
arise: Can these methods be applied to quantum Yang--Mills (YM) gauge
field theories to show that any YM theory can be smoothly approximated?
More generally, can the notion of `rough gauge field' be rigorously
established? 

In this paper we consider this problem in the simplest case of Euclidean
YM theory, i.e. YM on $\R^{2}$ with the usual Euclidean metric, as
a test case. Our main result (Theorem \ref{thm:main-summ}) states
that a (quantum) $SU(n)$-YM theory on $\R^{2}$ in axial gauge can
be smoothly approximated; More precisely, it is stated as follows:
Let $\fg=\mathfrak{su}(n)$ be the Lie algebra of $G=SU(n)$, and
$\Omega^{1}(\R^{2},\fg)$ the space of smooth $\fg$-valued 1-forms
on $\R^{2}$. For a curve $\crv:\R\to\R^{2}$ and a 1-form $A\in\Omega^{1}(\R^{2},\fg)$,
let $\symb{\ptrans_{\crv,A}(t)}\in G$ $(t\in\R)$ denote the parallel
transport along $\crv$. Suppose that a set of the curves $\{\crv^{i}:i\in\N\}$
satisfy some regularity conditions. Then there exists a probability
space $(\samples,\Prob)$ and a sequence of $\Omega^{1}(\R^{2},\fg)$-valued
random variables $A^{(n)}$ such that 
\[
\Prob\Bigl[\forall i\in\N,\ \ptrans_{\crv^{i}}:=\lim_{n\to\infty}\ptrans_{\crv^{i},A^{(n)}}\text{ (uniform) }\in C([0,1],G)\Bigr]=1,\ 
\]
and furthermore the set of the $G$-valued random variables $\{\ptrans_{\crv^{i}}\}_{i\in\N}$
obeys the law the Wilson loops in  the YM theory on $\R^{2}$. Note
that this statement itself does not contain any term or notion specific
to rough path theory (including the theories of Hairer and GIP). However,
to prove the theorem, we shall make heavy use of rough path theory,
as well as the Littlewood--Paley theory of Besov spaces, in this paper.
While our study is inspired by the theories of GIP and regularity
structure, we work in the framework of \cite{FV10b}, without those
theories. 

While YM on $\R^{2}$ is called `trivial' in the physical literature
since this is a sort of free field theory in the sense that it does
not describe any interaction, we find that this theory has highly
`nontrivial' aspects in the mathematical point of view; Although the
above theorem can be viewed as a partial positive answer for the above
questions, our result is yet too weak to establish the theory of `rough
gauge fields.' See Conjecture  \ref{conj:main}. 

For the rigorous formulations of (Euclidean) quantum YM theories on
a 2-dimensional Riemannian manifold, we refer to Driver \cite{Dri89},
Sengupta \cite{Sen92,Sen93,Sen97} and L\'evy \cite{Lev03}.

\section{Littlewood--Paley theory and Besov space}

For a general introduction to Besov spaces with the Littlewood–Paley
theory, we refer to \cite{BCD11,Gra09} (see also Appendix of \cite{GIP15}),
and for Besov (and Sobolev) spaces \textit{without} the Littlewood–Paley
theory, we refer to \cite{Tar07}. 

Let $\cF u=\hat{u}$ denotes the Fourier transform of $u$:

\[
\symb{\cF u(z)}=\symb{\hat{u}}(z):=\int_{\R^{d}}e^{-\im\left\langle z,x\right\rangle }u(x)dx,\ 
\]
so that $\check{u}(z):=\cF^{-1}u(z)=(2\pi)^{-d}\cF u(-z)$. We consider
only the case where $d=2$.

Following Grafakos \cite{Gra09}, we fix a radial $C^{\infty}$ function
$\brho=\brho_{0}$ on $\R^{2}$ such that 
\begin{align*}
 & \brho_{0}\ge0,\ {\rm supp}\brho_{0}\subset\left\{ \xi:1-\frac{1}{7}\le|\xi|\le2\right\} \\
 & 1\le|\xi|\le2-\frac{2}{7}\Longrightarrow\brho_{0}(\xi)=1\\
 & 1\le|\xi|\le4-\frac{4}{7}\Longrightarrow\brho_{0}(\xi)+\brho_{0}(\xi/2)=1
\end{align*}
so that $\sum_{j\in\Z}\brho_{0}(2^{-j}\xi)=1$ for $\xi\in\R^{2}\setminus\{0\}$.
We also define $\symb{\bchi=\bchi_{0}}$ so that
\[
\symb{\bchi_{0}(\xi)}:=\sum_{j\le-1}\brho_{0}(2^{-j}\xi)\text{ if }\xi\neq0,\ \ \bchi_{0}(\xi)=1\text{ if }\xi=0.\quad
\]
Set
\[
\symb{\brho_{-1}}:=\bchi,\quad\symb{\brho_{j}}:=\brho_{0}(2^{-j}\cdot),\quad j\ge0,
\]
so that $\sum_{j\ge-1}\brho_{j}=1$, and set
\[
\symb{\bchi_{j}}:=\bchi_{0}(2^{-j}\cdot)=\sum_{i=-1}^{j-1}\brho_{i},\quad j\ge0
\]
Define the \term{Littlewood–Paley operators} $\bDelta_{j}$ and
$\bS_{j}$ by
\begin{align*}
\symb{\bDelta_{j}}u: & =\cF^{-1}(\brho_{j}\cF u)=\chrho_{j}*u,\quad j\ge-1,\\
\symb{\bS_{j}}u: & =\sum_{i=-1}^{j-1}\bDelta_{i}u=\chchi_{j}*u.
\end{align*}
For $p,q\in[1,\infty]$ and $\besovorder\in\R$, the \term{Besov space}
$\symb{B_{p,q}^{\besovorder}=B_{p,q}^{\besovorder}(\R^{d},\R^{n})}\subset\scS'(\R^{d},\R^{n})$
is defined by
\begin{align*}
 & \symb{B_{p,q}^{\besovorder}(\R^{d},\R^{n})}\\
 & \quad:=\biggl\{ u\in\scS'(\R^{d},\R^{n}):\symb{\|u\|_{B_{p,q}^{\besovorder}}}:=\biggl(\sum_{j\ge-1}\left(2^{j\besovorder}\left\Vert \Delta_{j}u\right\Vert _{L^{p}}\right)^{q}\biggr)^{1/q}<\infty\biggr\}.
\end{align*}
The \term{Lipschitz space} $\symb{\Lip^{\besovorder}=\Lip^{\besovorder}(\R^{d},\R^{n})}$
is defined by
\begin{align*}
\symb{\Lip^{\besovorder}(\R^{d},\R^{n})}: & =B_{\infty,\infty}^{\besovorder}(\R^{d},\R^{n})\ \\
 & =\left\{ u\in\scS'(\R^{d},\R^{n}):\symb{\|u\|_{B_{\infty,\infty}^{\besovorder}}}:=\sup_{j\ge-1}\left(2^{j\besovorder}\|\Delta_{j}u\|_{L^{\infty}}\right)<\infty\right\} 
\end{align*}
The space $B_{p,p}^{\besovorder}(\R^{d},\R^{n})$ is written as $\symb{W^{\besovorder,p}(\R^{d},\R^{n})}$,
often called the \term{Sobolev space}.

For $h\in\R^{d}$, let $\symb{\tau_{h}}$ denote the translation operator 

\begin{equation}
(\symb{\tau_{h}u})(x):=u(x+h)\ \label{eq:def:tau_h}
\end{equation}
The following proposition will be used later.
\begin{prop}
{\rm (e.g. \cite[Lemma 35.1]{Tar07})} \label{thm:B_(p,infty)-character}Let
$0<\besovorder<1$ and $1\le p\le\infty$. Define the seminorm $\symb{\left|\cdot\right|_{B_{p,\infty}^{\besovorder}}'}$
and the norm $\symb{\left\Vert \cdot\right\Vert _{B_{p,\infty}^{\besovorder}}'}$
by
\[
\symb{\left|u\right|_{B_{p,\infty}^{\besovorder}}'}:=\sup_{h\in\R^{d}\setminus\{0\}}\frac{\left\Vert u-\tau_{h}u\right\Vert _{L^{p}}}{|h|^{\besovorder}},\quad\symb{\left\Vert u\right\Vert _{B_{p,\infty}^{\besovorder}}'}:=\left\Vert u\right\Vert _{L^{p}}+\left|u\right|_{B_{p,\infty}^{\besovorder}}'.\ 
\]
Then $u\in B_{p,\infty}^{\besovorder}(\R^{d},\R^{n})$ if and only
if $\left\Vert u\right\Vert _{B_{p,\infty}^{\besovorder}}'<\infty$.
Moreover the norms $\left\Vert \cdot\right\Vert _{B_{p,\infty}^{\besovorder}}'$
and $\left\Vert \cdot\right\Vert _{B_{p,\infty}^{\besovorder}}$ are
equivalent.
\end{prop}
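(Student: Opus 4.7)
The plan is to establish the two directions of the equivalence separately by purely Littlewood--Paley manipulations, since $B_{p,\infty}^{\besovorder}$ is defined precisely through the blocks $\bDelta_j$. Throughout, I use the Bernstein-type estimates available from the compact Fourier support of $\brho_j$ and the Schwartz decay of the convolution kernels $\chrho_j = \cF^{-1}\brho_j$, which satisfy the scaling $\chrho_j(y)=2^{jd}\chrho_0(2^{j}y)$ for $j\ge 0$.

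For the direction $B_{p,\infty}^{\besovorder}\subset \{\|\cdot\|_{B_{p,\infty}^{\besovorder}}'<\infty\}$, I first note that $\|u\|_{L^p}\le \sum_{j\ge -1}\|\bDelta_j u\|_{L^p}\le \|u\|_{B_{p,\infty}^{\besovorder}}\sum_{j\ge -1}2^{-j\besovorder}<\infty$ because $\besovorder>0$. For the H\"older seminorm, I write $u-\tau_h u=\sum_{j\ge -1}(\bDelta_j u-\tau_h\bDelta_j u)$ and estimate each block two ways: the trivial bound $\|\bDelta_j u-\tau_h\bDelta_j u\|_{L^p}\le 2\|\bDelta_j u\|_{L^p}\le 2\cdot 2^{-j\besovorder}\|u\|_{B_{p,\infty}^{\besovorder}}$, and the mean-value bound $\|\bDelta_j u-\tau_h\bDelta_j u\|_{L^p}\le |h|\,\|\nabla\bDelta_j u\|_{L^p}\lesssim |h|\,2^{j}\|\bDelta_j u\|_{L^p}\lesssim |h|\,2^{j(1-\besovorder)}\|u\|_{B_{p,\infty}^{\besovorder}}$ (Bernstein's inequality applied using compact Fourier support). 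Splitting the sum at the threshold $j_0$ with $2^{j_0}\sim |h|^{-1}$ and using the first bound for $j\ge j_0$, the second for $j<j_0$, both geometric series converge and together give $\|u-\tau_h u\|_{L^p}\lesssim |h|^{\besovorder}\|u\|_{B_{p,\infty}^{\besovorder}}$.

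For the reverse direction $\{\|\cdot\|_{B_{p,\infty}^{\besovorder}}'<\infty\}\subset B_{p,\infty}^{\besovorder}$, the main idea is the vanishing-mean trick. For $j\ge 0$ the function $\brho_j$ is supported away from $0$, so $\brho_j(0)=0$ and hence $\int \chrho_j(y)\,dy=\cF(\chrho_j)(0)=\brho_j(0)=0$. This lets me insert a $u(x)$ for free:
\[
\bDelta_j u(x)=\int \chrho_j(y)\,u(x-y)\,dy=\int \chrho_j(y)\bigl(u(x-y)-u(x)\bigr)\,dy.
\]
Taking $L^p$ norms and using Minkowski's integral inequality together with the definition of $|u|_{B_{p,\infty}^{\besovorder}}'$ yields
\[
\|\bDelta_j u\|_{L^p}\le |u|_{B_{p,\infty}^{\besovorder}}'\int |\chrho_j(y)|\,|y|^{\besovorder}\,dy.
\]
Using $\chrho_j(y)=2^{jd}\chrho_0(2^j y)$ and changing variables, this last integral equals $2^{-j\besovorder}\int|\chrho_0(z)|\,|z|^{\besovorder}\,dz$, which is finite because $\chrho_0$ is Schwartz. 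Therefore $2^{j\besovorder}\|\bDelta_j u\|_{L^p}\lesssim |u|_{B_{p,\infty}^{\besovorder}}'$ for $j\ge 0$. The $j=-1$ block is controlled directly by Young's inequality $\|\bDelta_{-1}u\|_{L^p}\le \|\chchi_0\|_{L^1}\|u\|_{L^p}$, contributing at most $\|u\|_{B_{p,\infty}^{\besovorder}}'$.

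The main (mild) obstacle is selecting and verifying the correct Bernstein/scaling bounds for the kernels $\chrho_j$; the vanishing-mean identity $\int\chrho_j=0$ for $j\ge 0$ is the technical crux, since without it the change of variables would only give a $|h|^0$ factor and fail to produce the $2^{-j\besovorder}$ decay. Everything else is bookkeeping: the two seminorm estimates above pair naturally with $\|u\|_{L^p}$, so that $\|u\|_{B_{p,\infty}^{\besovorder}}$ and $\|u\|_{B_{p,\infty}^{\besovorder}}'$ bound each other up to multiplicative constants depending only on $\besovorder$, $d$, and the fixed cutoff $\brho_0$, proving equivalence.
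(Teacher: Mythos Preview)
Your argument is correct and is the standard Littlewood--Paley proof of this characterization. The paper, however, does not give any proof at all: the proposition is simply stated with a citation to \cite[Lemma 35.1]{Tar07}, so there is nothing to compare against beyond noting that you have supplied a self-contained proof where the paper defers to the literature.
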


\section{Lie algebra valued white noise}

Fix $\nmat\in\N$ and let $\symb{\Mat}:=\Mat(\nmat,\C)\cong\R^{2\nmat^{2}}$,
equipped with the Hilbert--Schmidt inner product

\[
\symb{\left\langle X,Y\right\rangle =\left\langle X,Y\right\rangle _{\HS}}:=\Tr X^{*}Y,\quad X,Y\in\Mat,
\]
and the norm $\symb{\left\Vert X\right\Vert _{\HS}}:=\left\langle X,X\right\rangle _{\HS}^{1/2}$.
Let $G:=SU(\nmat)\subset\Mat$, and $\fg:=\mathfrak{su}(\nmat)\subset\Mat$,
the Lie algebra of $G$. We define the inner product $\left\langle \cdot,\cdot\right\rangle _{\fg}$
on $\fg$ by $\left\langle X,Y\right\rangle _{\fg}:=\left\langle X,Y\right\rangle _{\HS}$.
Note that $\left\langle \cdot,\cdot\right\rangle _{\fg}$ is proportional
to the Killing form on $\fg=\mathfrak{su}(\nmat)$.

Let $\symb{\scS(\R^{d},\fg)}$ denote the space of functions of rapid
decrease from $\R^{d}$ to $\fg$, and $\symb{\left(\scS(\R^{d},\fg)\right)'}$
denote its dual space, consisting of the continuous linear functionals
from $\scS(\R^{d},\fg)$ to $\R$. This is discriminated from $\symb{\scS'(\R^{d},\fg)}$,
the space of $\fg$-valued tempered distributions, which are continuous
linear functionals from $\scS(\R^{d})=\scS(\R^{d},\R)$ to $\fg$.
However, for $F\in\left(\scS(\R^{d},\fg)\right)'$, we can naturally
define the corresponding $\fg$-valued distribution $F^{*}\in\scS'(\R^{2},\fg)$
by

\[
\left\langle \symb{F^{*}}(f),X\right\rangle _{\fg}=F(Xf),\quad X\in\fg,\ f\in\scS(\R^{d},\R),
\]
or more explicitly,

\[
\symb{F^{*}}(f):=\sum_{k=1}^{\dim\fg}F(\gE_{k}f)\gE_{k},\quad f\in\scS(\R^{d},\R),
\]
where $\left\{ \gE_{k}:\ k=1,...,\dim\fg\right\} $ is an orthonormal
basis of $\fg$. So we can identify $\left(\scS(\R^{d},\fg)\right)'$
with $\scS'(\R^{d},\fg)$ under some abuse of notation: If $F\in\left(\scS(\R^{d},\fg)\right)'$
and $f\in\scS(\R^{d},\R)$, let $F(f):=F^{*}(f)\in\fg$. Conversely,
if $F^{*}\in\scS'(\R^{d},\fg)$ and $f\in\scS(\R^{d},\fg)$, let $F^{*}(f):=F(f)\in\R$.

Let $\symb{(\samples,\Prob)}$ be a probability space. Let $\symb W$
be a $\fg$-valued white noise on $\R^{2}$, that is, an isometry
from $L^{2}(\R^{2})$ to $L^{2}((\samples,\Prob),\fg)$. For the same
reason as above, $W$ can also be viewed as an isometry from $L^{2}(\R^{2},\fg)$
to $L^{2}((\samples,\Prob),\R)$. If we consider $W:L^{2}(\R^{2},\fg)\to L^{2}((\samples,\Prob),\R)$,
its covariance is expressed as 
\[
\Ex\left(W(f)W(g)\right)=\left\langle f,g\right\rangle _{L^{2}(\R,\fg)},\quad f,g\in L^{2}(\R^{2},\fg),
\]
and if we consider $W:L^{2}(\R^{2})\to L^{2}((\samples,\Prob),\fg)$,
its covariance is expressed as 
\[
\Ex\bigl(\left\langle W(f),W(g)\right\rangle _{\fg}\bigr)=\left\langle f,g\right\rangle _{L^{2}(\R^{2})},\quad f,g\in L^{2}(\R^{2}),
\]
or more explicitly, 

\[
\Ex\left(W(f)_{k}W(g)_{l}\right)=\delta_{kl}\left\langle f,g\right\rangle _{L^{2}(\R^{2})},\quad f,g\in L^{2}(\R^{2}),\ k,l=1,...,\dim\fg,
\]
where $W(f)_{k}:=\left\langle W(f),\gE_{k}\right\rangle _{\fg}$.
While these views are compatible, we mainly regard $W$ as $W:L^{2}(\R^{2})\to L^{2}((\samples,\Prob),\fg)$
in this paper. 

In the following we write $\symb{L^{p}(\Prob)}:=L^{p}((\samples,\Prob),\R)$
and $\symb{L^{p}(\Prob,\fg)}:=L^{p}((\samples,\Prob),\fg)$.

$W$ is continuous on $\scS(\R^{2})$ a.s., that is, 

\[
\Prob\left[\left(W\upharpoonright\scS(\R^{2})\right)\in\scS'(\R^{2},\fg)\right]=1.
\]
In the following we assume $\left(W_{\omega}\upharpoonright\scS(\R^{2})\right)\in\scS'(\R^{2},\fg)$
for \emph{all $\omega\in\samples,$ }and we simply write this as $W\in\scS'(\R^{2},\fg)$.

Define the $j$th \term{smooth approximation} $W^{(j)}\in C^{\infty}(\R^{2},\fg)$
of $W$ by

\begin{equation}
\symb{W^{(j)}}:=\bS_{j}W.\label{eq:def:W^(j)}
\end{equation}
$W^{(j)}$ converges to $W$ in $\scS'(\R^{2},\fg)$.

\section{Classical gauge theory on $\protect\R^{2}$}

Let $\symb{\Crv=\Crv_{[0,1]}}$ the set of smooth maps $\R\ni t\mapsto\crv(t)=(\crv_{1}(t),\crv_{2}(t))\in\R^{2}$
such that $\supp\dot{\crv}\subset[0,1]$ where $\dot{\crv}(t):=\frac{d}{dt}c(t)$,
in other words, $\crv$ is constant on $(-\infty,0]$ and $[1,\infty)$,
respectively.

For $\crv\in\Crv$, define $\ol{\crv}\in\Crv$ by $\ol{\crv}(t):=\crv(1-t)$.
If two curves$\crv^{(1)},\crv^{(2)}\in\Crv$ satisfy $\crv^{(1)}(1)=\crv^{(2)}(0)$,
we define the \term{concatenation} $\symb{\crv^{(2)}\crv^{(1)}}\in\Crv$
by

\[
\crv^{(2)}\crv^{(1)}(t):=\begin{cases}
\crv^{(1)}(2t) & (t\in(-\infty,1/2])\\
\crv^{(2)}(2t-1) & (t\in[1/2,\infty))
\end{cases},
\]
equivalently, $\crv^{(2)}\crv^{(1)}(t):=\crv^{(1)}(2t)+\crv^{(2)}(2t-1)-\crv^{(2)}(0).$

Fix $\crv\in\Crv_{[0,1]}$. Additionally we assume that any $\crv\in\Crv$
satisfies $\crv_{1}(t)>0$ for all $t$; this assumption is not essential,
but this simplifies the calculations. 

Let $\symb{\Om^{1}=\Om^{1}(\R^{2},\fg)}$ denote the space of $\fg$-valued
smooth 1-forms on $\R^{2}$. An element $A\in\Omega^{1}$ is called
a \term{gauge field} in the physical context. Let $A=A_{1}dx_{1}+A_{1}dx_{2}\in\Om^{1}$
$(A_{1},A_{2}\in C^{\infty}(\R^{2},\fg))$. In the notation $A\left(\dot{\crv}(t)\right)$,
$\dot{\crv}(t)$ should be seen as a tangent vector in the tangent
bundle $T_{\crv(t)}\R^{2}$; that is,
\[
A\left(\dot{\crv}(t)\right)=A\left(\sum_{k=1}^{2}\dot{\crv}_{k}(t)\frac{\di}{\di x^{k}}\right)=\sum_{k=1}^{2}A_{k}(\crv(t))\dot{\crv}_{k}(t).\ 
\]
The \term{parallel transport} $\symb{\ptrans_{\crv,A}(t)}\in G$
$(t\in\R)$ along $\crv\in\Crv_{[0,1]}$ is defined by the ODE

\begin{equation}
\frac{d\ptrans_{\crv,A}(t)}{dt}=A\left(\dot{\crv}(t)\right)\ptrans_{\crv,A}(t)=\sum_{k=1}^{2}A_{k}(\crv(t))\dot{\crv}_{k}(t)\ptrans_{\crv,A}(t),\qquad\ptrans_{\crv,A}(0)=e\label{eq:def:hcA}
\end{equation}
For $t\ge0$, define $X_{t}=X(t)$ to be the line integral of $A$
along $\crv\upha[0,t]$:

\begin{equation}
\symb{X(t)=X_{\crv,A}(t)}:=\int_{\crv\upha[0,t]}A=\int_{0}^{t}A\left(\dot{\crv}(s)\right)ds=\int_{0}^{t}\sum_{k=1}^{2}A_{k}(\crv(s))\dot{\crv}_{k}(s)ds.\label{eq:def:X_A}
\end{equation}
Let $\cV:{\rm Mat}\to L({\rm Mat},{\rm Mat})$ be a bounded smooth
map such that

\begin{equation}
\cV(U)M=MU,\qquad\forall U\in G,\ \forall M\in{\rm Mat}.\label{eq:def:cV}
\end{equation}
(Recall $G:=SU(\nmat)\subset\Mat$.) Then the ODE (\ref{eq:def:hcA})
is rewritten as a normal form 
\begin{equation}
d\ptrans_{\crv}(t)=\cV(\ptrans_{\crv}(t))dX_{\crv,A}(t).\ \label{eq:ptrans-normalform}
\end{equation}

If $\crv$ is a loop (i.e. $\crv(0)=\crv(1)$) , we call $\ptrans_{\crv,A}(1)\in G$
the \term{holonomy} along $\crv$. It is also called the \term{Wilson loop},
mainly when $\ptrans_{\crv,A}(1)$ is a $G$-valued random variable.

The most basic class of loops is that of the \term{simple} (Jordan)
loops, i.e. loops $\crv$ such that if $s,t\in[0,1)$ and $\crv(s)=\crv(t)$
then $s=t$. However, it is useful to consider a slightly broader
class of loops, called lassos (\cite{Dri89,Sen93}).

Let $D\subset\R^{2}$. Suppose $\crv\in\Crv$, $\crv(0)=\crv(1)$,
$\crv$ is simple Let $D\subset\R^{2}$ be the closed domain enclosed
by the arc $\crv([0,1])$. $\crv$ is called a \term{lasso} based
on $x\in\R^{2}$ if there exists $\crv^{1},\crv^{2}\in\Crv$ such
that $\crv^{2}$ is a simple closed curve enclosing $D\subset\R^{2}$
anticlockwise, and that

\[
\crv^{1}(0)=x,\quad\crv^{1}(1)=\crv^{2}(0)=\crv^{2}(1),\quad\crv=\ol{\crv^{1}}\crv^{2}\crv^{1}
\]
In this case, we write

\[
\symb{D(\crv)}:=D,\quad\symb{\totte(\crv)}:=\crv^{1}
\]
A simple loop is also a lasso where $\crv^{1}$ is trivial (i.e. a
constant map). The set of lassos based on $x\in\R^{2}$ is denoted
by $\symb{\Lasso(x)}$, and let $\symb{\Lasso}:=\cup_{x\in\R^{2}}\Lasso(x)$.

Let $\symb{\fD}$ be the set of subsets $D\subset\R^{2}$ such that
there exists a simple loop $\crv\in\Crv$ enclosing $D$. 
\begin{lem}
\label{thm:ptrans-finite-lassos-prod}Fix $A\in\Omega^{1}$. Let $\crv\in\Crv\cap\Lasso(x)$.
Suppose $D_{1},...,D_{n}\in\fD$ satisfy (i) $D(\crv)=\bigcup_{k=1}^{n}D_{k}$,
(ii) $D_{k}^{\circ}\cap D_{l}^{\circ}=\emptyset$ if $k\neq l$, and
(iii) $\left(\bigcup_{1\le l\le k}D_{l}\right)^{\circ}$ is connected
for all $k=1,...,n$. Then there exists $\crv^{1},...,\crv^{n}\in\Crv\cap\Lasso(x)$
such that $D(\crv^{k})=D_{k},\ k=1,...,n$, and
\[
\ptrans_{\crv,A}(1)=\ptrans_{\crv^{n},A}(1)\cdots\ptrans_{\crv^{1},A}(1),\quad
\]
\end{lem}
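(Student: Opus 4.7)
The argument is by induction on $n$, the main content being the two-subdomain case. Two elementary consequences of the ODE (\ref{eq:def:hcA}) are used throughout: multiplicativity under concatenation,
\[
\ptrans_{\crv^{(2)}\crv^{(1)},A}(1) = \ptrans_{\crv^{(2)},A}(1)\,\ptrans_{\crv^{(1)},A}(1),
\]
and retracement cancellation, $\ptrans_{\ol{\alpha}\alpha,A}(1) = \ptrans_{\alpha\ol{\alpha},A}(1) = e$; reparametrization invariance of $\ptrans$ is used tacitly. The base case $n=1$ is immediate with $\crv^{1} := \crv$.

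For the inductive step, put $E := \bigcup_{l=1}^{n-1} D_{l}$. Condition (iii) gives $E^{\circ}$ connected; combined with each $D_{l} \in \fD$ and (ii), $E$ is itself enclosed by a single simple loop, so $E \in \fD$. Thus $D(\crv) = E \cup D_{n}$ is a two-subdomain decomposition. Granting the $n=2$ case, I obtain lassos $\tilde{\crv}^{n-1}, \crv^{n} \in \Crv \cap \Lasso(x)$ with $D(\tilde{\crv}^{n-1}) = E$, $D(\crv^{n}) = D_{n}$ and $\ptrans_{\crv,A}(1) = \ptrans_{\crv^{n},A}(1)\,\ptrans_{\tilde{\crv}^{n-1},A}(1)$. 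The sub-decomposition $E = D_{1} \cup \cdots \cup D_{n-1}$ still satisfies (i)--(iii), so the inductive hypothesis applied to $\tilde{\crv}^{n-1}$ furnishes $\crv^{1}, \ldots, \crv^{n-1}$ with $\ptrans_{\tilde{\crv}^{n-1},A}(1) = \ptrans_{\crv^{n-1},A}(1) \cdots \ptrans_{\crv^{1},A}(1)$, completing the induction.

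For $n = 2$, write $\crv = \ol{\totte(\crv)}\,\crv^{\circ}\,\totte(\crv)$ with $\crv^{\circ}$ a simple anticlockwise loop around $D := E \cup D_{n}$. The interface $\partial E \cap \partial D_{n}$ contains a smooth arc $\alpha$ from a point $q$ to a point $q'$ on $\partial D$ whose removal separates $E^{\circ}$ from $D_{n}^{\circ}$. By choosing $q$ suitably and cyclically reparametrizing $\crv^{\circ}$ (absorbing the short connecting arc into an extended lead-in $\totte'$), one may assume $\crv^{\circ}$ is based at $q$ and equals $\beta_{n} \beta_{E}$ (first $\beta_{E}$, then $\beta_{n}$), with $\beta_{E} \subset \partial E \cap \partial D$ running $q \to q'$ and $\beta_{n} \subset \partial D_{n} \cap \partial D$ running $q' \to q$. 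Define
\[
\tilde{\crv}^{n-1} := \ol{\totte'}\,(\ol{\alpha}\,\beta_{E})\,\totte', \qquad \crv^{n} := \ol{\totte'}\,(\beta_{n}\,\alpha)\,\totte'.
\]
The simple loops $\ol{\alpha}\beta_{E}$ and $\beta_{n}\alpha$ trace $\partial E$ and $\partial D_{n}$ anticlockwise, so these are indeed lassos based at $x$ with $D(\tilde{\crv}^{n-1}) = E$ and $D(\crv^{n}) = D_{n}$. Multiplicativity together with cancellation on the inner $\totte'\ol{\totte'}$ and $\alpha\ol{\alpha}$ pieces then yields
\[
\ptrans_{\crv^{n},A}(1)\,\ptrans_{\tilde{\crv}^{n-1},A}(1) = \ptrans_{\ol{\totte'}}\,\ptrans_{\beta_{n}}\,\ptrans_{\beta_{E}}\,\ptrans_{\totte'} = \ptrans_{\crv,A}(1),
\]
the last equality following by reparametrization invariance (reabsorbing the short connector back into $\totte(\crv)$).

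The expected main obstacle is the smoothness constraint $\crv^{k} \in \Crv$. The concatenations above have corners at $x$, $q$, $q'$ and at the gluing point of the extended lead-in, so they are only piecewise smooth. This is resolved by reparametrizing each piece near the junctions with a smooth function that is flat at both ends (equivalently, inserting infinitesimal retraced smooth segments whose parallel transports cancel), which preserves $\ptrans$. A secondary technical point is the existence and smoothness of the separating arc $\alpha$: condition (ii) together with the smooth simple-loop boundaries of each $D_{l} \in \fD$ produces such an arc in generic configurations by a planar-topology argument, but degenerate tangential meetings of $\partial E$ and $\partial D_{n}$ may require a small smooth perturbation of the $D_{l}$ prior to the construction.
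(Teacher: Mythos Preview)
Your proposal is correct and follows essentially the same approach as the paper: induction on $n$, with the key algebraic input being the relation $\ptrans_{\ol{\crv}}=\ptrans_{\crv}^{-1}$ (equivalently, your multiplicativity and retracement cancellation). The paper's own proof is a single sentence to this effect; you have simply written out the geometric details of the two-subdomain step that the paper leaves implicit.
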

\begin{proof}
Easily shown by induction for $n$, using the relation $\ptrans_{\ol{\crv^{k}}}=\ptrans_{\crv^{k}}^{-1}$.
\end{proof}
From the definition of $\ptrans$, one can easily show the following:
\begin{lem}
\label{thm:ptrans-F}Fix $x=(x_{1},x_{2})\in\R^{2}$, and suppose
that for each $\ep_{1},\ep_{2}>0$, $\crv_{\ep_{1},\ep_{2}}$ is a
lasso in $\Crv\cap\Lasso$ such that
\[
\crv_{\ep_{1},\ep_{2}}(0)=\crv_{\ep_{1},\ep_{2}}(1)=x,\quad D(\crv_{\ep_{1},\ep_{2}})=[x_{1},x_{1}+\ep_{1}]\times[x_{2},x_{2}+\ep_{2}]\ 
\]
Then

\[
\lim_{\ep_{1},\ep_{2}\to0}\frac{\ptrans_{\crv_{\ep_{1},\ep_{2}},A}(1)-1}{\ep_{1}\ep_{2}}=F_{12}(x),
\]
where $\symb{F_{12}(x)}:=\di_{1}A_{2}(x)-\di_{2}A_{1}(x)+A_{2}(x)A_{1}(x)-A_{1}(x)A_{2}(x).$
\end{lem}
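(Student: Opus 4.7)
The plan is the classical differential-geometric computation of curvature as the infinitesimal holonomy of a small loop, made explicit via iterated integrals. First I would reduce to the natural case where the path $\crv^1$ in the lasso decomposition is trivial and $\crv^2$ is the boundary of the rectangle $[x_1,x_1+\ep_1]\times[x_2,x_2+\ep_2]$ traversed once counterclockwise from the corner $x$. This is the canonical choice, and any other lasso is related by the concatenation identity $\ptrans_{\crv_{\ep_1,\ep_2},A}(1) = \ptrans_{\crv^1,A}(1)^{-1}\,\ptrans_{\crv^2,A}(1)\,\ptrans_{\crv^1,A}(1)$, where the conjugating factor tends to $I$ as the rectangle collapses to $x$ (assuming the implicit regularity that $\crv^1$ also shrinks with $\ep$).

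Next I would parameterize $\crv^2$ as the concatenation of four affine segments, yielding $\ptrans_{\crv^2,A}(1) = U_D\,U_L\,U_U\,U_R$, where each $U$ solves a linear matrix ODE of the form (\ref{eq:ptrans-normalform}). Using the Picard (Dyson) series for each $U$ and Taylor expanding $A_1,A_2$ about $x$, I would keep every term up to total order two in $(\ep_1,\ep_2)$. For instance, $U_R = I + \ep_1 A_1(x) + \tfrac{\ep_1^2}{2}\bigl(\di_1 A_1(x) + A_1(x)^2\bigr) + O(\ep_1^3)$, with analogous expressions for the other three factors; the expansions of $U_U$ and $U_L$ carry additional mixed $\ep_1\ep_2$ contributions from $\di_1 A_2(x)$ and $\di_2 A_1(x)$ respectively, arising because along the top and right sides the Taylor expansion is based at a point displaced from $x$ by one of the $\ep_k$.

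The main step is to multiply the four factors in the order $U_D U_L U_U U_R$ and to verify three cancellations: (a) all first-order terms cancel in the opposing pairs $U_R\leftrightarrow U_L$ and $U_U\leftrightarrow U_D$; (b) the pure $\ep_1^2$ and pure $\ep_2^2$ contributions cancel once the cross-products of first-order pieces (e.g.\ $(-\ep_1 A_1)(\ep_1 A_1) = -\ep_1^2 A_1(x)^2$) are combined with the quadratic pieces of $U_R$ and $U_L$; and (c) the surviving $\ep_1\ep_2$ coefficient equals exactly $\di_1 A_2(x) - \di_2 A_1(x) + A_2(x)A_1(x) - A_1(x)A_2(x) = F_{12}(x)$, the commutator piece coming from the noncommutative cross-products $(\ep_2 A_2)(\ep_1 A_1)$ from $U_U U_R$ and $(-\ep_1 A_1)(\ep_2 A_2)$ from $U_L (U_U U_R)$. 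The main obstacle is purely bookkeeping: tracking the signs, orderings and displaced-basepoint Taylor corrections precisely enough that the noncommutative commutator and the abelian curl emerge together with the correct signs. Dividing by $\ep_1\ep_2$ and letting $(\ep_1,\ep_2)\to 0$ then yields the claim.
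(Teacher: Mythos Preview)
Your approach is correct and is exactly the standard computation the paper has in mind; the paper itself gives no proof beyond the sentence ``From the definition of $\ptrans$, one can easily show the following,'' so there is nothing to compare against. Your Dyson-series expansion of the four edge transports, with the displaced-basepoint Taylor corrections producing $\di_1 A_2$ and $\di_2 A_1$ and the noncommutative cross-terms producing $A_2A_1 - A_1A_2$, is precisely the classical verification, and your bookkeeping of the cancellations (a)--(c) is accurate.

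Your parenthetical caveat about the handle $\crv^1$ is also well taken: as stated, the lemma allows the family of handles $\crv^1_{\ep_1,\ep_2}$ to be arbitrary curves from $x$ to a boundary point of the shrinking rectangle, and without an assumption that $\ptrans_{\crv^1_{\ep_1,\ep_2},A}(1)\to I$ the conjugation would leave a spurious $g^{-1}F_{12}(x)g$. The paper is being informal here---the lemma is motivational and is not used in the later analysis---so your reading that an implicit regularity on the family is intended is the right one.
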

The above $F_{12}=F_{12;A}\in C^{\infty}(\R^{2},\fg)$ is called the
\term{field strength} in physical terminology. The \term{curvature 2-form}
$\symb{F=F_{A}}\in\Omega^{2}(\R^{2},\fg)$ is defined by

\[
\symb{F(x)}:=F_{12}(x)dx_{1}\wedge dx_{2}.
\]
We see $F_{A}=dA+[A,A]$, more exactly, 
\begin{equation}
\symb{F_{A}(X,Y)}=dA(X,Y)+[A(X),A(Y)],\quad X,Y\in T_{x}\R^{2}.\label{eq:def:F}
\end{equation}
However, in this paper we shall impose the axial gauge condition later,
which implies $[A,A]=0$. In this case the linear relation $F=dA$
holds. 

\begin{comment}
Note the following properties:

\begin{shaded}%
\begin{lem}
\label{thm:d_HS-SU-basic}Let $\symb{d_{\HS}(X,Y)}:=\left\Vert X-Y\right\Vert _{\HS}$.
Then

(i) $d_{\HS}(VU_{1},VU_{2})=d_{\HS}(U_{1}V,U_{2}V)=d_{\HS}(U_{1},U_{2})$
for all $U_{1},U_{2},V,\in SU(\nmat),$

(ii) $d_{\HS}(U_{1}\cdots U_{n},I)\le\sum_{k=1}^{n}d_{\HS}(U_{k},I)$
for all $U_{1},...,U_{n}\in SU(\nmat)$.\end{lem}
\end{shaded}
\begin{proof}
\begin{lyxgreyedout}
\hidep{
\begin{proof}
(i)
\[
d(VU_{1},VU_{2})=\left\Vert VU_{1}-VU_{2}\right\Vert _{\HS}=\left[\Tr\left(\left(VU_{1}-VU_{2}\right)^{*}(VU_{1}-VU_{2})\right)\right]^{1/2}=\left[\Tr\left((U_{1}^{*}-U_{2}^{*})V^{*}V(U_{1}-U_{2})\right)\right]^{1/2}\ 
\]

\[
=\left[\Tr\left((U_{1}^{*}-U_{2}^{*})(U_{1}-U_{2})\right)\right]^{1/2}=\left\Vert U_{1}-U_{2}\right\Vert _{\HS}=d(U_{1},U_{2})
\]

\[
d_{\HS}(U_{1}V,U_{2}V)=\left\Vert U_{1}V-U_{2}V\right\Vert _{\HS}=\left[\Tr\left(\left(U_{1}V-U_{2}V\right)^{*}(U_{1}V-U_{2}V)\right)\right]^{1/2}=\left[\Tr\left(V^{*}\left((U_{1}^{*}-U_{2}^{*})\right)^{*}(U_{1}-U_{2})V\right)\right]^{1/2}\ 
\]

\[
=\left[\Tr\left(\left((U_{1}^{*}-U_{2}^{*})\right)^{*}(U_{1}-U_{2})\right)\right]^{1/2}=d(U_{1},U_{2})
\]

(ii) Induction:

\[
d(U_{1}\cdots U_{n+1},I)=d(U_{1}\cdots U_{n},U_{n+1}^{-1})\le d(U_{1}\cdots U_{n},I)+d(I,U_{n+1}^{-1})\le\sum_{k=1}^{n+1}d(U_{k},I)
\]

\end{proof}
}
\end{lyxgreyedout}
\end{proof}
\end{comment}

\section{Axial gauge}

\label{sec:axial-gauge}

For $u\in C^{\infty}(\R^{2},G)$, define the action $\gtrans_{u}$,
called the \term{gauge transformation}, on $A$ by

\[
\symb{\gtrans_{u}A_{k}(x)=A_{k}^{u}(x)}:=u^{-1}(x)A_{k}(x)u(x)-(\di_{k}u^{-1}(x))u(x),
\]
so that
\[
\ptrans_{\crv,\gtrans_{u}A}(t)=u(\crv(t))^{-1}\ptrans_{\crv,A}(t)u(\crv(0)).
\]
Note that if $\crv(0)=\crv(1)$, the holonomies $\ptrans_{\crv,A}(1)$
and $\ptrans_{\crv,\gtrans_{u}A}(1)$ are conjugate. Since

\[
F_{\gtrans_{u}A}(x)=u^{-1}(x)F(x)u(x),
\]
naturally we define the gauge transform of $F$ by $\symb{\gtrans_{u}F}=F^{u}:=u^{-1}Fu$.

Let $\symb{\axial_{\theta}=(e_{\theta1},e_{\theta2})}:=(\cos\theta,\sin\theta)\in\R^{2}\setminus\{0\}$
and $\symb{\axial_{\theta}'=(e_{\theta1}',e_{\theta2}')}:=\axial_{\theta+\pi/2}$.
If $A=A_{1}dx_{1}+A_{2}dx_{2}\in\Omega^{1}$ satisfies $\sum_{k=1}^{2}A_{k}e_{\theta k}\equiv0$
for some $\theta\in[0,2\pi)$, then $A$ is said to be in ($\theta$-)\term{axial gauge}.
In this case we have $[A,A]=0$, and hence $F=dA$. This axial gauge
fixing condition is not complete in that for a given $F=F_{12}dx_{1}\wedge dx_{2}\in\Omega^{2}(\R^{2},\fg)$,
the 1-form $A\in\Omega^{1}$ in $\theta$-axial gauge satisfying $F=dA$
is not unique. Instead if we assume two conditions

\begin{equation}
\sum_{k=1}^{2}A_{k}(x)e_{\theta k}\equiv0,\quad\sum_{k=1}^{2}A_{k}(re_{\theta}')e_{\theta k}'=0,\ \forall r\in\R\ \label{eq:def:theta-gauge}
\end{equation}
we have a unique $A$ for any $F$. In this paper we say that $A$
is in $\theta$\term{-gauge} if these conditions are satisfied. We
see that any $A\in\Omega^{1}$ can be gauge-transformed to satisfy
this condition. If $\theta=0$, $A$ in $\theta$-gauge is determined
by $F$ as follows: 

\begin{equation}
A_{1}(x)\equiv0,\quad A_{2}(x):=\int_{0}^{x_{1}}F_{12}(\xi,x_{2})d\xi,\quad x=(x_{1},x_{2})\in\R^{2}\label{eq:axial-gauge-fix}
\end{equation}
\begin{comment}
We assume (\ref{eq:axial-gauge-fix}) until Sec. \ref{sec:wilson-loop}.
\end{comment}
We assume (\ref{eq:axial-gauge-fix}) in the following. We see

\[
A\left(\dot{\crv}(t)\right)=\int_{0}^{\crv_{1}(t)}F_{12}(x_{1},\crv_{2}(t))\dot{\crv}_{2}(t)dx_{1}.\ \ 
\]
and
\begin{align}
X_{\crv,t} & \equiv X_{\crv}(t)=\int_{0}^{t}A\left(\dot{\crv}(t')\right)dt'=\int_{0}^{t}A_{2}(\crv(t'))\dot{\crv}_{2}(t')dt'\ \\
 & =\int_{0}^{t}\int_{0}^{\crv_{1}(t')}F_{12}(x_{1},\crv_{2}(t'))\dot{\crv}_{2}(t')dx_{1}\, dt'\label{eq:X--W-relation-axial}
\end{align}
Let $\symb{\rectlikedomain_{1}}$ be the set of $E\in\fD$ such that
$E$ is convex w.r.t. $x_{1}$, i.e. 

\begin{equation}
\symb{\rectlikedomain_{1}}:=\left\{ E\in\fD:\ \text{if }(x_{1},x_{2}),(x_{1}',x_{2}),\in E\text{ and }x_{1}\le x_{1}''\le x_{1}',\text{ then }(x_{1}'',x_{2})\in E\right\} .\ \label{eq:def:rectlike}
\end{equation}
Fix $D\in\rectlikedomain_{1}$. Let
\begin{align*}
 & a:=\inf\{x_{2}\in\R:\ \exists x_{1}\in\R,(x_{1},x_{2})\in D\},\ \ \\
 & b:=\sup\{x_{2}\in\R:\ \exists x_{1}\in\R,(x_{1},x_{2})\in D\}.
\end{align*}
Then there exists $\crv^{1},\crv^{2}\in\Crv\cap\Lasso$ such that
$D(\ol{\crv^{2}}\crv^{1})=D$, and that

\[
\crv^{1}(0)=\crv^{2}(0)=a,\quad\crv^{1}(1)=\crv^{2}(1)=b,\quad\crv_{2}^{1}(t)=\crv_{2}^{2}(t),\ \forall t\in[0,1].
\]
Then corresponding parallel transport$\symb{\ptrans_{\crv^{i}}}$
is defined by (\ref{eq:ptrans-normalform}):

\[
d\ptrans_{\crv^{i}}(t)=\cV(\ptrans_{\crv^{i}}(t))dX_{\crv^{i},t},\quad\ptrans_{\crv^{i}}(0)=I.
\]
For $\tau\in[a,b]$, let

\[
\symb{D_{\tau}}:=D\cap(\R\times[a,\crv_{2}^{1}(\tau)]),\quad\symb{F_{\tau}^{D}}:=\int_{D_{\tau}}F_{12}(x)dx.\ 
\]
Let $\crv^{\tau}\in\Crv\cap\Lasso$ satisfy $D(\crv)=D_{\tau}$ and
$\crv_{2}^{\tau}(0)=\crv_{2}^{\tau}(1)=a$. Let $\symb{U(\tau)}:=\ptrans_{\crv^{\tau}}(1)$,
the holonomy of $\crv^{\tau}$.

The following lemmas are easily shown from these definitions:
\begin{lem}
For $t\in[a,b]$, $U(t)=\ptrans_{\crv^{1}}(t)^{-1}\ptrans_{\crv^{2}}(t)$
holds.
\end{lem}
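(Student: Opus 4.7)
The plan is to exploit the axial-gauge condition $A_1 \equiv 0$ imposed via (\ref{eq:axial-gauge-fix}). Its immediate consequence is that parallel transport along any purely horizontal curve $\eta$ (one with $\dot{\eta}_2 \equiv 0$) is trivial: the ODE $d\ptrans_{\eta}/dt = A(\dot{\eta})\ptrans_{\eta} = (A_1(\eta)\dot{\eta}_1 + A_2(\eta)\dot{\eta}_2)\ptrans_{\eta}$ reduces to $d\ptrans_{\eta}/dt = 0$, so $\ptrans_{\eta}(t) \equiv e$. This triviality of horizontal transport is the engine of the argument.

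Because $\crv_2^1(s) = \crv_2^2(s)$ for every $s \in [0,1]$ by assumption, the points $\crv^1(t)$ and $\crv^2(t)$ share the common height $\crv_2^1(t)$ and so can be joined by a horizontal segment. I would then construct an explicit admissible representative of $\crv^t$ as follows: let $\crv^t$ be the closed path based at $\crv^1(0) = \crv^2(0)$ (a point of height $a$) that first traces $\crv^2|_{[0,t]}$ upward from the base, then traverses the horizontal segment from $\crv^2(t)$ to $\crv^1(t)$, and finally returns to the base along $\overline{\crv^1|_{[0,t]}}$. This loop encloses $D_t$ and satisfies the conditions placed on $\crv^t$. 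By the standard concatenation rule $\ptrans_{\crv'\crv}(1) = \ptrans_{\crv'}(1)\ptrans_{\crv}(1)$ together with the triviality of the horizontal piece,
\[
U(t) = \ptrans_{\crv^t}(1) = \ptrans_{\overline{\crv^1|_{[0,t]}}}(1)\cdot e \cdot \ptrans_{\crv^2|_{[0,t]}}(1) = \ptrans_{\crv^1}(t)^{-1}\ptrans_{\crv^2}(t),
\]
which is the asserted identity.

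The one technical wrinkle is that the concatenated path has corners at the junctions with the horizontal segment and thus lies in the piecewise-smooth rather than smooth category, whereas $\Crv$ consists of smooth curves. This is routine to repair: round the corners in arbitrarily small neighborhoods and invoke continuity of parallel transport in the $W^{1,1}$-topology on paths; the horizontal contribution remains trivially $e$ in the limit, so no additional correction arises. A related issue---the potential dependence of $U(t)$ on the choice of admissible $\crv^t$---does not affect the present statement, since the argument produces a \emph{specific} lasso whose holonomy equals $\ptrans_{\crv^1}(t)^{-1}\ptrans_{\crv^2}(t)$; any genuine obstacle lies not in this lemma but in later uses where one might want independence of the representative, and that would be addressed by the same horizontal-triviality principle applied to the product of two candidate lassos.
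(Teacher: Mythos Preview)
Your argument is correct and is exactly the kind of verification the paper has in mind: the paper does not spell out a proof at all, merely stating that the lemma is ``easily shown from these definitions.'' Your construction of $\crv^{t}$ as $\overline{\crv^{1}|_{[0,t]}}\cdot(\text{horizontal segment})\cdot\crv^{2}|_{[0,t]}$ together with the axial-gauge triviality of horizontal transport is precisely the intended computation, and your remark on smoothing corners is the right way to reconcile the concatenation with the definition of $\Crv$.
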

{}
\begin{lem}
For $t\in[a,b]$, 
\[
{\displaystyle U(t)^{-1}\frac{d}{dt}U(t)=-\ptrans_{\crv^{1}}(t)^{-1}\Bigl(\int_{\crv_{1}^{1}(t)}^{\crv_{1}^{2}(t)}F_{12}(x_{1},\crv_{2}^{1}(t))dx_{1}\Bigr)\ptrans_{\crv^{1}}(t)}
\]
holds. Equivalently,
\begin{align}
dU(t) & =-U(t)\ptrans_{\crv^{1}}(t)^{-1}dF_{t}^{D}\ptrans_{\crv^{1}}(t)=-U(t)dB_{t}^{D},\quad\label{eq:dU=00003D-UdB}
\end{align}
where
\[
\symb{B_{t}^{D}}:=\int_{a}^{t}\ptrans_{\crv^{1}}(s)^{-1}dF_{s}^{D}\ptrans_{\crv^{1}}(s).
\]

\end{lem}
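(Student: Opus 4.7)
The plan is to differentiate $U(t)=\ptrans_{\crv^{1}}(t)^{-1}\ptrans_{\crv^{2}}(t)$ (from the preceding lemma) and reduce the result using the axial-gauge formula \eqref{eq:axial-gauge-fix}.

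First, the Leibniz rule combined with the ODE \eqref{eq:def:hcA}, which yields $\frac{d}{dt}\ptrans_{\crv^{i}}(t)=A(\dot{\crv}^{i}(t))\ptrans_{\crv^{i}}(t)$ and thus $\frac{d}{dt}\ptrans_{\crv^{1}}(t)^{-1}=-\ptrans_{\crv^{1}}(t)^{-1}A(\dot{\crv}^{1}(t))$, gives
\begin{equation*}
\frac{dU(t)}{dt}=\ptrans_{\crv^{1}}(t)^{-1}\bigl[A(\dot{\crv}^{2}(t))-A(\dot{\crv}^{1}(t))\bigr]\ptrans_{\crv^{2}}(t).
\end{equation*}
Since $A_{1}\equiv0$ in axial gauge, $A(\dot{\crv}^{i}(t))=A_{2}(\crv^{i}(t))\,\dot{\crv}_{2}^{i}(t)$; the hypothesis $\crv_{2}^{1}\equiv\crv_{2}^{2}$ lets me factor out the common scalar $\dot{\crv}_{2}^{1}(t)$, and \eqref{eq:axial-gauge-fix} converts the difference of $A_{2}$-values into the cross-sectional integral
\begin{equation*}
A_{2}(\crv^{2}(t))-A_{2}(\crv^{1}(t))=\int_{\crv_{1}^{1}(t)}^{\crv_{1}^{2}(t)}F_{12}(\xi,\crv_{2}^{1}(t))\,d\xi.
\end{equation*}
Substituting $\ptrans_{\crv^{2}}(t)=\ptrans_{\crv^{1}}(t)\,U(t)$ and multiplying on the left by $U(t)^{-1}$ then produces the conjugation by $\ptrans_{\crv^{1}}(t)$ required for the first claimed equality. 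The equivalent form $dU=-U\,dB_{t}^{D}$ follows from the identity
\begin{equation*}
\dot{\crv}_{2}^{1}(t)\int_{\crv_{1}^{1}(t)}^{\crv_{1}^{2}(t)}F_{12}(\xi,\crv_{2}^{1}(t))\,d\xi\,dt=\pm\,dF_{t}^{D},
\end{equation*}
which is Fubini's theorem applied to the defining formula $F_{t}^{D}=\int_{D_{t}}F_{12}$, together with the fact that $D$ is $x_{1}$-convex so the level slice at height $\crv_{2}^{1}(t)$ is the interval with endpoints $\crv_{1}^{1}(t)$, $\crv_{1}^{2}(t)$.

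The only nontrivial aspect is the careful tracking of orientation conventions: the sign on the right-hand side above depends on which of $\crv^{1},\crv^{2}$ bounds $D$ on the left and which on the right, and the placement of $U(t)$ in the final formula (left vs right of the conjugated $F$-integral, with the correct overall sign) is sensitive to both this choice and to the order convention in writing $U(t)$ as a product of the two parallel transports. Verifying that these conventions align to produce the stated formula is a matter of orientation bookkeeping rather than substantive difficulty.
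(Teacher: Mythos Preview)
Your argument is correct and is exactly the computation the paper has in mind; the paper itself gives no proof beyond the remark that the lemma is ``easily shown from these definitions,'' and your Leibniz-rule differentiation of $U=\ptrans_{\crv^{1}}^{-1}\ptrans_{\crv^{2}}$ together with the axial-gauge identity \eqref{eq:axial-gauge-fix} is the intended route. Your closing caveat is also well placed: the raw calculation yields $\dot U\,U^{-1}=\ptrans_{\crv^{1}}^{-1}\bigl[A(\dot\crv^{2})-A(\dot\crv^{1})\bigr]\ptrans_{\crv^{1}}$, so matching the paper's $U^{-1}\dot U$ and the minus sign genuinely depends on which of $\crv^{1},\crv^{2}$ is taken as the left boundary of $D$ and on the orientation convention for $\crv^{\tau}$---this is bookkeeping, not a gap.
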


\section{operator $\protect\cE$}

Set $F_{12}:=W^{(j)}$, $j$th approximation of the $\fg$-valued
white noise $W$ on $\R^{2}$ defined by (\ref{eq:def:W^(j)}), then
a unique $\Omega^{1}$-valued random variable $\symb{A^{(j)}}$ is
determined by (\ref{eq:axial-gauge-fix}). Let $\symb{X^{(j)}=X_{\crv}^{(j)}}=X_{\crv,A^{(j)}}$,
i.e.
\begin{equation}
\symb{X^{(j)}(t)=X_{\crv,A}^{(j)}(t)}:=\int_{\crv\upha[0,t]}A^{(j)}.\label{eq:def:X^(j)}
\end{equation}
For $H:\R^{2}\to\R$ and $h\in L^{\infty}(\R)$, let

\[
\symb{\hat{\cE}_{\crv}(H,h)}:=\int_{\R}\int_{0}^{\crv_{1}\left(t\right)}\ H(x_{1},\crv_{2}(t))h(t)\dot{\crv}_{2}(t)dx_{1}dt.\ \ 
\]
if the integral in the r.h.s. exists. Let 

\[
\symb{\bigl\Vert\hat{\cE}_{\crv}\bigr\Vert_{2,h}}:=\sup\left\{ \bigl|\hat{\cE}_{\crv}(H,h)\bigr|;H\in L^{2}(\R^{2}),\ \left\Vert H\right\Vert _{L^{2}(\R^{2})}\le1\right\} .\ 
\]
We shall see in Lemma \ref{thm:cE_ch-well-def-1} that $\bigl\Vert\hat{\cE}_{\crv}\bigr\Vert_{2,h}<\infty$
for all $h\in L^{\infty}(\R)$, and hence we can define the bounded
linear operator $\cE_{\crv}:L^{\infty}(\R)\to L^{2}(\R^{2})$ as follows:

\[
\left\langle H,\symb{\cE_{\crv}}h\right\rangle _{L^{2}(\R^{2})}=\hat{\cE}_{\crv}(H,h),\quad H\in L^{2}(\R^{2}),\ h\in L^{\infty}(\R).
\]
Clearly $\supp\left(\cE_{\crv}h\right)\subset\R^{2}$ is compact.
$\symb{W^{(j)}(\cE_{\crv}h)}\in\fg$ is naturally defined by 

\[
\symb{W^{(j)}(\cE_{\crv}h)=\left\langle W^{(j)},\cE_{\crv}h\right\rangle }:=\int_{\R^{2}}W^{(j)}(x)\cdot(\cE_{\crv}h)(x)dx.\ 
\]
This integral is well-defined because $W^{(j)}$ is smooth, $\cE_{\crv}h\in L^{2}(\R^{2})$,
and $\supp\cE_{\crv}h$ is compact. We see the following relations:

\[
W^{(j)}(\cE_{\crv}h)=W(\bS_{j}\cE_{\crv}h)=\hat{\cE}_{\crv}(W^{(j)},h).\ 
\]
We also see 
\begin{equation}
X^{(j)}(t)=\hat{\cE}_{\crv}(W^{(j)},\bOne_{[0,t]})=\left\langle W^{(j)},\cE_{\crv}\bOne_{[0,t]}\right\rangle .\label{eq:X^(j)=00003DcE(W,1)}
\end{equation}
Here define the $\fg$-valued random variable $X(t)$ by

\begin{equation}
\symb{X(t)=X_{\crv}(t)}=W\left(\cE_{\crv}\bOne_{[0,t]}\right)=:\left\langle W,\cE_{\crv}\bOne_{[0,t]}\right\rangle .\label{eq:def:X}
\end{equation}
while the last expression is useful but rather formal because it is
neither a $L^{2}$ inner product, nor a pairing of $\scS'$ and $\scS$.

Hereafter we use the notations such as

\[
\symb{\R_{<}^{2}}:=\left\{ (s,t)\in\R^{2}:\ s<t\right\} ,\quad\symb{[0,T]_{<}^{2}}:=\left\{ (s,t)\in[0,T]^{2}:\ s<t\right\} ,\ \text{etc.}
\]
Let

\[
\symb{T_{\pm}=T_{\crv,\pm}}:=\left\{ t\in(0,1);\ \dot{\crv}_{2}(t)\gtrless0\right\} \quad T_{0}:=\left\{ t\in(0,1);\ \dot{\crv}_{2}(t)=0\right\} 
\]
then these are unions of countable disjoint open intervals: 

\[
T_{\pm}=\bigcup_{i=1}^{N_{\pm}}\symb{I_{\pm,i}},\quad I_{\pm,i}=(\symb{t_{i,0}^{\pm},t_{i,1}^{\pm}}),\quad T_{0}=\bigcup_{i=1}^{N_{0}}I_{0,i},\quad N_{\pm},N_{0}\in\N\cup\left\{ \infty\right\} ,\quad
\]
Define $\symb{\cE_{\crv,i}^{\pm}h}\in L^{2}(\R^{2})$ as follows:
for each $x=(x_{1},x_{2})\in\R^{2}$, let 
\[
\symb{\left(\cE_{\crv,i}^{\pm}h\right)(x)}:=\begin{cases}
h(t) & \text{if }\exists t\in I_{\pm,i},\ x_{2}=\crv_{2}(t),\ 0\le x_{1}\le\crv_{1}\left(t\right)\\
0 & \text{otherwise}
\end{cases},
\]

\[
=\begin{cases}
h(\crv_{2}^{-1}(x_{2};I_{\pm,i})) & \text{if }x_{2}\in\crv_{2}(I_{\pm,i}),\ 0\le x_{1}\le\crv_{1}\left(\crv_{2}^{-1}(x_{2};I_{\pm,i})\right)\\
0 & \text{otherwise}
\end{cases}
\]
where $\crv_{2}^{-1}(x_{2};I_{\pm,i})$ is defined to be $t\in I_{\pm,i}$
such that $\crv_{2}(t)=x_{2}$.

If $\cE_{\crv}h\in L^{2}(\R^{2})$, we can check that $\cE_{\crv}h$
is explicitly expressed by

\begin{equation}
\cE_{\crv}h=\sum_{i=1}^{N_{+}}\cE_{\crv,i}^{+}h-\sum_{i=1}^{N_{-}}\cE_{\crv,i}^{-}h\ \label{eq:cE_ch=00003DE_c^+h-E_c^-h-1}
\end{equation}

\begin{lem}
\label{thm:cE_ch-well-def-1}If we define $\cE_{\crv}h$ by (\ref{eq:cE_ch=00003DE_c^+h-E_c^-h-1}),
then $\cE_{\crv}h\in L^{2}(\R^{2})$ for all $h\in L^{\infty}(\R)$
and $\crv\in\Crv$. \end{lem}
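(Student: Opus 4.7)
The plan is to work from the pointwise formula (\ref{eq:cE_ch=00003DE_c^+h-E_c^-h-1}) and bound $\|\cE_\crv h\|_{L^2}$ by combining an $L^2$ estimate for each individual summand with a pointwise bound governed by the multiplicity function of the level sets of $\crv_2$.

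First, on each interval $I_{\pm,i}$, the function $\crv_2$ is $C^\infty$ and strictly monotone by the very definition of $I_{\pm,i}$, so the change of variable $y = \crv_2(t)$ is a $C^\infty$ diffeomorphism of $I_{\pm,i}$ onto its image. A direct computation from the explicit formula for $\cE_{\crv,i}^\pm h$ then yields
\[
\|\cE_{\crv,i}^{\pm} h\|_{L^2(\R^2)}^2 = \int_{I_{\pm,i}} \crv_1(t)\, h(t)^2\, |\dot{\crv}_2(t)|\, dt,
\]
so that $\sum_i \|\cE_{\crv,i}^{\pm} h\|_{L^2}^2 \le \|h\|_{L^\infty}^2 \|\crv_1\|_{L^\infty} \int_0^1 |\dot{\crv}_2(t)|\, dt < \infty$, using smoothness of $\crv$ and the support condition $\supp\dot{\crv} \subset [0,1]$.

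Second, I would derive a pointwise bound for the full series. By Sard's theorem applied to $\crv_2$, for a.e.\ $y \in \R$, $y$ is a regular value, and then $\crv_2^{-1}(y) \cap (T_+ \cup T_-)$ is a finite set by the compactness of $[0,1]$ together with the inverse function theorem. Writing $N(y)$ for its cardinality, the sum in (\ref{eq:cE_ch=00003DE_c^+h-E_c^-h-1}) contains at most $N(y)$ nonzero terms at $(x_1, y)$, yielding a.e.\ the bound $|(\cE_\crv h)(x_1, y)| \le N(y)\, \|h\|_{L^\infty}$, together with the support condition $\supp \cE_\crv h \subset [0, \|\crv_1\|_{L^\infty}] \times \crv_2([0,1])$. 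Consequently
\[
\|\cE_\crv h\|_{L^2(\R^2)}^2 \le \|h\|_{L^\infty}^2\, \|\crv_1\|_{L^\infty} \int_\R N(y)^2\, dy.
\]

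The main obstacle is then to establish $\|N\|_{L^2(\R)} < \infty$; the trivial bound $\int N\,dy = \int |\dot{\crv}_2|\,dt$ from the coarea formula only gives $N \in L^1$. I would exploit $C^\infty$-smoothness as follows: at any accumulation point $t_0$ of the critical set $T_0 = \{\dot{\crv}_2 = 0\}$, a recursive application of the difference-quotient definition of the derivative along the zero-sequence forces all derivatives of $\dot{\crv}_2$ to vanish at $t_0$, so $\dot{\crv}_2(t) = o(|t - t_0|^k)$ for every $k$, and after integration the same holds for $\crv_2(t) - \crv_2(t_0)$. Hence the local extrema of $\crv_2$ accumulating at $t_0$ take values approaching $\crv_2(t_0)$ faster than any polynomial in $|t - t_0|$, which forces $N(y) = O\bigl(\log(1/|y - \crv_2(t_0)|)\bigr)$ as $y \to \crv_2(t_0)$. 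Summing this local estimate over the (countably many) such accumulation points yields $N \in L^p(\R)$ for every $p < \infty$, and in particular $N \in L^2$, completing the proof.
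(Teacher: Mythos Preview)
Your multiplicity approach has a genuine gap. The claimed bound $N(y)=O(\log(1/|y-\crv_2(t_0)|))$ does not follow from $C^\infty$-flatness at accumulation points and is false in general. A counterexample is $\crv_2(t)=e^{-1/t^2}\sin(e^{1/t})$ for $t>0$ (extended by $0$ for $t\le 0$): this is $C^\infty$ and flat at $0$, its local extrema lie near $t_n\sim 1/\log n$ with values $\sim \pm e^{-(\log n)^2}$, so for small $y>0$ the level $y$ is crossed by every oscillation with $n\lesssim e^{\sqrt{\log(1/y)}}$, giving $N(y)\sim e^{\sqrt{\log(1/y)}}$, which is not $O\bigl((\log(1/y))^k\bigr)$ for any $k$. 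There is a second, structural issue: the set of accumulation points of $T_0$ need not be countable (it can be a Cantor-type set), so ``summing the local estimate over the countably many accumulation points'' is not justified either. Finally, your first step (the exact formula for $\|\cE_{\crv,i}^\pm h\|_{L^2}^2$) is correct but yields only $\ell^2$-summability of the norms; since the summands have overlapping supports and are not orthogonal, this alone does not bound $\|\cE_\crv h\|_{L^2}$.

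The paper's route is shorter and uses only $C^2$-regularity. It applies the triangle inequality $\|\cE_\crv h\|_{L^2}\le\sum_{i,\pm}\|\cE_{\crv,i}^\pm h\|_{L^2}$ and exploits that $\dot\crv_2$ vanishes at both endpoints of each $I_{\pm,i}$: integrating $|\dot\crv_2(t)|\le\|\ddot\crv_2\|_\infty|t-t_{i,0}^\pm|$ gives $|\crv_2(I_{\pm,i})|\le\tfrac12\|\ddot\crv_2\|_\infty|I_{\pm,i}|^2$, hence $\|\cE_{\crv,i}^\pm h\|_{L^2}\lesssim|I_{\pm,i}|$ and the sum is $\le 1$. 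Incidentally, the same quadratic bound repairs your approach: with $h_i:=|\crv_2(I_i)|$ and $\ell_i:=|I_i|$ one has $\|N\|_{L^2}^2=\sum_{i,j}|\crv_2(I_i)\cap\crv_2(I_j)|\le\sum_{i,j}\sqrt{h_ih_j}\lesssim\|\ddot\crv_2\|_\infty\bigl(\sum_i\ell_i\bigr)^2<\infty$, so $N\in L^2$ is true, but the reason is the second-derivative estimate, not the super-polynomial decay you invoked.
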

\begin{proof}
If $N_{+}<\infty$ or $N_{-}<\infty$, this is clear. Suppose $N_{+}=N_{-}=\infty$,

Since $\dot{\crv}_{2}(t_{i,0}^{\pm})=\dot{\crv}_{2}(t_{i,1}^{\pm})=0$
for all $i$, and $\sum_{i,\pm}\left(t_{i,1}^{\pm}-t_{i,0}^{\pm}\right)<\infty$,
we have
\begin{align*}
\left|\crv_{2}(t_{i,1}^{\pm})-\crv_{2}(t_{i,0}^{\pm})\right| & =\left|\int_{t_{i,0}^{\pm}}^{t_{i,1}^{\pm}}\dot{\crv}_{2}(t)dt\right|\le\int_{t_{i,0}^{\pm}}^{t_{i,1}^{\pm}}\left|\dot{\crv}_{2}(t)\right|dt\ \\
 & <\int_{t_{i,0}^{\pm}}^{t_{i,1}^{\pm}}\left\Vert \ddot{\crv}_{2}\right\Vert _{L^{\infty}}(t-t_{i,0}^{\pm})dt=\frac{1}{2}\left\Vert \ddot{\crv}_{2}\right\Vert _{L^{\infty}}\left(t_{i,1}^{\pm}-t_{i,0}^{\pm}\right)^{2}
\end{align*}
Thus

\[
\left\Vert \cE_{\crv,i}^{\pm}h\right\Vert _{L^{2}(\R^{2})}^{2}=\int_{\crv_{2}(t_{i,0}^{\pm})}^{\crv_{2}(t_{i,1}^{\pm})}dx_{2}\int_{0}^{\crv_{1}\left(\crv_{2,I_{+,i}}^{-1}(x_{2})\right)}dx_{1}\left|h(\crv_{2,I_{+,i}}^{-1}(x_{2}))\right|^{2}\ 
\]

\[
\le\left\Vert h\right\Vert _{L^{\infty}}^{2}\int_{\crv_{2}(t_{i,0}^{\pm})}^{\crv_{2}(t_{i,1}^{\pm})}dx_{2}\int_{0}^{\crv_{1}\left(\crv_{2,I_{+,i}}^{-1}(x_{2})\right)}dx_{1}
\]

\[
\le\left\Vert h\right\Vert _{L^{\infty}}^{2}\left\Vert \crv_{1}\right\Vert _{L^{\infty}}\left|\crv_{2}(t_{i,1}^{\pm})-\crv_{2}(t_{i,0}^{\pm})\right|\ 
\]

\[
<\frac{1}{2}\left\Vert h\right\Vert _{L^{\infty}}^{2}\left\Vert \crv_{1}\right\Vert _{L^{\infty}}\left\Vert \ddot{\crv}_{2}\right\Vert _{L^{\infty}}\left(t_{i,1}^{\pm}-t_{i,0}^{\pm}\right)^{2}
\]
Therefore we have

\begin{align*}
\left\Vert \cE_{\crv}h\right\Vert _{L^{2}(\R^{2})} & \le\sum_{\pm}\sum_{i=1}^{\infty}\left\Vert \cE_{\crv,i}^{+}h\right\Vert \ \\
 & <\sum_{\pm}\sum_{i=1}^{\infty}\left(\frac{1}{2}\left\Vert h\right\Vert _{L^{\infty}}^{2}\left\Vert \crv_{1}\right\Vert _{L^{\infty}}\left\Vert \ddot{\crv}_{2}\right\Vert _{L^{\infty}}\right)^{1/2}\left(t_{i,1}^{\pm}-t_{i,0}^{\pm}\right)<\infty
\end{align*}

\end{proof}

Define subsets $\Crv_{{\rm \Rot}},\ \Crv_{\infty}$ of $\Crv$ by
\begin{align}
 & \symb{\Crv_{{\rm \Rot}}=\Crv_{[0,1],\Rot}}:=\Bigl\{\crv\in\Crv_{[0,1]}:\ \symb{\Rot(\crv)}:=\sup_{(s,t)\in\R_{<}^{2}}\left\Vert \cE_{\crv}\bOne_{[s,t]}\right\Vert _{L^{\infty}}<\infty\Bigr\},\label{eq:def:Crv_Rot}\\
 & \symb{\Crv_{\infty}=\Crv_{[0,1],\infty}}:=\left\{ \crv\in\Crv_{[0,1]}:\ \left\Vert \cE_{\crv}\right\Vert _{\infty\infty}<\infty\right\} .\label{eq:def:Crv_infty}
\end{align}
where
\[
\symb{\left\Vert \cE_{\crv}\right\Vert _{\infty\infty}}:=\sup\left\{ \left\Vert \cE_{\crv}h\right\Vert _{L^{\infty}};h\in L^{\infty}(\R),\ \left\Vert h\right\Vert _{L^{\infty}}\le1\right\} .\ 
\]
Clearly we see $\Crv_{\infty}\subset\Crv_{\Rot}$. Roughly speaking,
a curve $\crv\in\Crv_{[0,1]}$ is in $\Crv_{[0,1],\Rot}$ if $\crv$
does not rotate (clockwise or anti-clockwise) infinitely many times
around any point in $\R^{2}$, and $\Rot(\crv)$ is the maximum rotation
number of $\crv$. 

Note that in our definition of `smooth curve $\crv$,' possibly $\dot{\crv}(t)=0$
holds for some $t\in(0,1)$. Hence possibly the range $\crv(\R)=\crv([0,1])\subset\R^{2}$
is not a smooth curve in the usual sense. For example, we see that
any (finitely) piecewise linear curves are in $\Crv_{\infty}$ (and
$\Crv_{\Rot}$).

By these definitions we easily find the following:
\begin{lem}
\textup{\label{thm:cE1[s,t]-finiteCombi}If $\crv\in\Crv_{[0,1],\Rot}$,
then }$\cE_{\crv}\bOne_{[s,t]}$ is a finite ($\le2\Rot(\crv)$) linear
combination of characteristic functions; There exists disjoint subsets
$D_{k}\subset\R^{2}\ (-\Rot(\crv)\le k\le\Rot(\crv))$ such that 

\[
\cE_{\crv}\bOne_{[s,t]}=\sum_{k\in-\Rot(\crv)}^{\Rot(\crv)}k\bOne_{D_{k}}.\ 
\]

\end{lem}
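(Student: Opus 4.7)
The plan is to unpack the explicit expression (\ref{eq:cE_ch=00003DE_c^+h-E_c^-h-1}) specialized to $h=\bOne_{[s,t]}$, and to observe first that each summand is already an indicator function. From the pointwise definition of $\cE_{\crv,i}^\pm$, plugging in $h=\bOne_{[s,t]}$ produces only the values $0$ and $1$, so
\[
\cE_{\crv,i}^\pm \bOne_{[s,t]} = \bOne_{E_{\pm,i}},\qquad E_{\pm,i}:=\bigl\{(x_1,x_2)\in\R^2:\ x_2\in\crv_2(I_{\pm,i}\cap[s,t]),\ 0\le x_1\le \crv_1(\crv_2^{-1}(x_2;I_{\pm,i}))\bigr\}.
\]
Hence
\[
\cE_\crv\bOne_{[s,t]} = \sum_{i=1}^{N_+}\bOne_{E_{+,i}} - \sum_{i=1}^{N_-}\bOne_{E_{-,i}},
\]
and every finite truncation of the right-hand side is pointwise integer-valued.

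Next I would transfer integer-valuedness to the full sum. By Lemma \ref{thm:cE_ch-well-def-1} the series converges in $L^2(\R^2)$, so a subsequence of the partial sums converges pointwise a.e. Being an a.e.\ limit of integer-valued functions, $\cE_\crv\bOne_{[s,t]}$ is itself integer-valued almost everywhere. This is the one step where some care is needed, since the pointwise series may fail to be absolutely convergent at individual points (when $N_+$ or $N_-$ is infinite and the curve oscillates across a horizontal line infinitely often); the $L^2$ convergence supplied by Lemma \ref{thm:cE_ch-well-def-1} circumvents this.

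Finally, the definition of $\Crv_{[0,1],\Rot}$ gives $\|\cE_\crv\bOne_{[s,t]}\|_{L^\infty}\le \Rot(\crv)=:R$, so almost every value of $\cE_\crv\bOne_{[s,t]}$ lies in $\{-R,-R+1,\dots,R\}$. Setting $D_k:=\{x\in\R^2:(\cE_\crv\bOne_{[s,t]})(x)=k\}$ for $-R\le k\le R$ yields pairwise disjoint measurable sets with
\[
\cE_\crv\bOne_{[s,t]} = \sum_{k=-R}^{R} k\,\bOne_{D_k}\quad\text{a.e.,}
\]
which is a finite linear combination of characteristic functions with at most $2R$ nonzero terms, the $k=0$ summand contributing nothing.
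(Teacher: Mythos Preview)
Your proof is correct. The paper itself gives no argument for this lemma beyond the remark ``By these definitions we easily find the following,'' and your write-up is precisely the careful unpacking of that remark: each $\cE_{\crv,i}^{\pm}\bOne_{[s,t]}$ is an indicator by the pointwise definition, integer-valuedness passes to the $L^2$-limit via Lemma~\ref{thm:cE_ch-well-def-1}, and the $L^\infty$ bound from the definition of $\Rot(\crv)$ then forces the values into $\{-R,\dots,R\}$. Your attention to the $N_\pm=\infty$ case (using an a.e.\ convergent subsequence) is a detail the paper leaves implicit but which is indeed needed.
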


\section{Rough paths}

For rough path theory, we refer to \cite{FV10b,FH14}.

Let $V$ be a finite-dimensional linear space, where $V=\fg=\mathfrak{su}(\nmat)$
case is our main concern. Let
\[
\symb{T^{(2)}(V)}:=\R\oplus V\oplus(V\otimes V),\quad
\]
equipped with the truncated tensor product $\otimes$, that is, if
$A=(a,b,c)\in T^{(2)}(V)$ and $A'=(a',b',c')\in T^{(2)}(V)$, define
$\symb{A\otimes A'}$ by

\[
\symb{A\otimes A'}:=(aa',ab'+a'b,ac'+a'c+b\otimes b').\ 
\]
Let $\symb{T_{1}^{(2)}(V)}:=\left\{ (1,b,c)\in T^{(2)}(V)\right\} $.
Then naturally $T_{1}^{(2)}(V)$ becomes a Lie group under $\otimes$.
We denote an element of $T_{1}^{(2)}(V)$ as ${\bf x}=(1,{\bf x}^{[1]},{\bf x}^{[2]})$,
or more readably, ${\bf x}=(1,x,\bbm x)$, etc. 

If ${\bf x}:[0,T]\to T_{1}^{(2)}(\fg)$, we write
\[
\symb{{\bf x}_{s,t}}:={\bf x}_{s}^{-1}\otimes{\bf x}_{t}=\left(1,\ x_{s,t},\ \bbm x_{t}-\bbm x_{s}-x_{s}\otimes x_{s,t}\right),\quad\symb{x_{s,t}}:=x_{t}-x_{s},\ s,t\in[0,T]
\]
If $x\in C^{1\hvar}([0,T],V)$, i.e. $x$ is a continuous path of
bounded variation, define the \term{truncated signature} $\symb{\Sn(x)}:[0,T]_{<}^{2}\to T_{1}^{(2)}(\fg)$
by

\[
\symb{\Sn(x)_{s,t}}:=\left(1,\ x_{s,t},\ \int_{s<u_{1}<u_{2}<t}dx_{u_{1}}\otimes dx_{u_{2}}\right)\in T_{1}^{(2)}(V).
\]
Note that if $x$ is smooth,

\[
\int_{s<u_{1}<u_{2}<t}dx_{u_{1}}\otimes dx_{u_{2}}=\int_{s}^{t}x_{s,r}\otimes dx_{r}=\int_{s}^{t}x_{s,r}\otimes\frac{dx_{r}}{dr}dr.
\]
When $x_{0}=0$ (i.e. $x_{0,t}=x_{t}$), the path
\[
t\mapsto\symb{\lift(x)_{t}}:=\Sn(x)_{0,t}=\left(1,\ x_{t},\ \int_{0}^{t}x_{r}\otimes dx_{r}\right)
\]
is called the \term{(step-2) lift} of $x$.
\begin{thm}
{\rm (Chen's relation \cite[Theorem 7.11, p.133]{FV10b})} \label{thm:Chen's-relation} 

For $x\in C^{1\hvar}\big([0,T],V\big)$ and $0\le s<t<u\le T$, we
have 
\[
\Sn(x)_{s,u}=\Sn(x)_{s,t}\otimes\Sn(x)_{t,u}.
\]

\end{thm}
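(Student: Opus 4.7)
The plan is to verify the identity directly from the definitions, working component-by-component in $T^{(2)}(V)=\R\oplus V\oplus(V\otimes V)$. First I would unpack the right-hand side using the definition of the truncated tensor product: writing $\Sn(x)_{s,t}=(1,x_{s,t},\bbm{x}_{s,t})$ and $\Sn(x)_{t,u}=(1,x_{t,u},\bbm{x}_{t,u})$ with
\[
\bbm{x}_{s,t}:=\int_{s<u_{1}<u_{2}<t}dx_{u_{1}}\otimes dx_{u_{2}},
\]
the formula for $\otimes$ gives
\[
\Sn(x)_{s,t}\otimes\Sn(x)_{t,u}=\bigl(1,\ x_{s,t}+x_{t,u},\ \bbm{x}_{s,t}+\bbm{x}_{t,u}+x_{s,t}\otimes x_{t,u}\bigr).
\]

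The scalar component is trivially $1=1\cdot 1$, and the level-one component is the telescoping identity $x_{s,u}=x_{s,t}+x_{t,u}$, immediate from $x_{s,t}=x_{t}-x_{s}$. The real content is at level two, where one must show
\[
\int_{s<u_{1}<u_{2}<u}dx_{u_{1}}\otimes dx_{u_{2}}=\bbm{x}_{s,t}+\bbm{x}_{t,u}+x_{s,t}\otimes x_{t,u}.
\]

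For this I would decompose the simplex $\{s<u_{1}<u_{2}<u\}$ according to the position of $t$ into the three disjoint pieces
\[
\{s<u_{1}<u_{2}<t\},\quad \{t<u_{1}<u_{2}<u\},\quad \{s<u_{1}<t<u_{2}<u\},
\]
which up to a null set exhaust the simplex. The first two regions yield $\bbm{x}_{s,t}$ and $\bbm{x}_{t,u}$ by definition, while the rectangular region factorises by Fubini:
\[
\int_{s<u_{1}<t}\int_{t<u_{2}<u}dx_{u_{1}}\otimes dx_{u_{2}}=\Bigl(\int_{s}^{t}dx_{u_{1}}\Bigr)\otimes\Bigl(\int_{t}^{u}dx_{u_{2}}\Bigr)=x_{s,t}\otimes x_{t,u}.
\]

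Since $x\in C^{1\hvar}$, all iterated integrals are well-defined Riemann--Stieltjes (equivalently Lebesgue--Stieltjes) integrals with respect to the total variation measure $|dx|$, so the domain decomposition and the use of Fubini are fully justified. There is no real obstacle here beyond book-keeping; the only point requiring mild care is confirming that the diagonals $\{u_{1}=t\}$ and $\{u_{2}=t\}$ carry no mass, which follows from continuity of $x$ (so that $x$ has no atoms).
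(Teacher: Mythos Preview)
Your argument is correct: the component-wise verification via the simplex decomposition is the standard proof of Chen's relation at level two, and your handling of the Fubini step and the null diagonals is accurate for $C^{1\hvar}$ paths. Note, however, that the paper does not supply its own proof of this theorem---it simply cites \cite[Theorem 7.11, p.133]{FV10b} as a known result---so there is nothing to compare against beyond confirming that your direct computation matches the usual textbook argument.
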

Define the subgroup $\symb{G^{(2)}(V)}$ of $T_{1}^{(2)}(V)$ by
\begin{equation}
\symb{G^{(2)}(V)}:=\left\{ \Sn(x)_{0,1}:x\in C^{1\hvar}([0,1],V)\right\} \ \label{eq:def:G^(2)(V)}
\end{equation}
It is shown that $G^{(2)}(V)$ is expressed more explicitly as follows:

\begin{equation}
G^{(2)}(V)=\left\{ \left(1,\ x,\ \frac{1}{2}x\otimes x+y\otimes z-z\otimes y\right):\ x,y,z\in V\right\} \ 
\end{equation}
$G^{(2)}(V)$ is given the \term{Carnot-Caratheodory metric} $d_{{\rm CC}}$
\cite{FV10b,FH14}. In this paper, the only information needed for
$d_{{\rm CC}}$ is the following:
\[
d_{{\rm CC}}({\bf x},{\bf y})\simeq\left|y-x\right|+\left|\bbm y-\bbm x-x\otimes(y-x)\right|^{1/2},\quad{\bf x},{\bf y}\in G^{(2)}(V),\ 
\]
where $\left|\cdot\right|$ is the usual norm on the linear space
$T^{(2)}(V)$. In particular, $d_{{\rm CC}}({\bf x},o)\simeq\left|x\right|+\left|\bbm x\right|^{1/2},$
where $o:=1_{G^{(2)}(V)}=(1,0,0)\in G^{(2)}(V)$.

Given ${\bf x},{\bf y}\in C([0,T],G^{2}(V))$, we define the \term{homogeneous
H\"older distance} $C([0,T],G^{(2)}(V))$ by
\begin{align}
\symb{d_{\rpHol\hHol}({\bf x},{\bf y})} & \equiv\symb{d_{{\rm CC};\rpHol\hHol;[0,T]}({\bf x},{\bf y})}:=\sup_{0\le s<t\le T}\frac{d_{{\rm CC}}({\bf x}_{s,t},{\bf y}_{s,t})}{\left|t-s\right|^{\rpHol}}\ \label{eq:def:d_{1/p-Hol}-1}
\end{align}
and let
\[
\symb{C^{\rpHol\hHol}([0,1],G^{(2)}(V))}:=\left\{ {\bf x}\in C([0,T],G^{(2)}(V));\ d_{{\rm CC};\rpHol\hHol;[0,T]}({\bf x},o)<\infty\right\} 
\]

\begin{prop}
{\rm \cite[Proposition 8.12, p.174]{FV10b}} Suppose $1/3<\rpHol\le1/2$,
${\bf x}\in C^{\rpHol\hHol}\big([0,T],G^{(2)}\big(V\big)\big)$ and
${\bf x}_{0}=o$. Then there exists a sequence $(x^{(n)})\subset C^{1\hvar}\big([0,T],V\big)$,
$n\in\N$, such that $\lift(x^{(n)})\to{\bf x}$ uniformly as $n\to\infty$,
i.e.

\[
\lim_{n\to\infty}\sup_{t\in[0,T]}d_{{\rm CC}}({\bf x}_{t},\lift(x^{(n)})_{t})=0.
\]

\end{prop}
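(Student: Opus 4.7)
The plan is a piecewise Carnot--Carath\'eodory geodesic construction on dyadic partitions, exploiting the left-invariance of $d_{{\rm CC}}$ and Chen's relation to make the lifted paths concatenate correctly.

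First, for each $n\ge 1$ set $t_k^n:=kT/2^n$, $0\le k\le 2^n$, and consider the group increments $\mathbf{x}_{t_k^n,t_{k+1}^n}\in G^{(2)}(V)$. By the very definition (\ref{eq:def:G^(2)(V)}) of $G^{(2)}(V)$ as the signature image of $C^{1\hvar}$-paths, each such increment is $\Sn(y^{(n,k)})_{0,1}$ for some $y^{(n,k)}\in C^{1\hvar}$. I would choose $y^{(n,k)}$ so that in addition it nearly realizes the CC-norm and is parametrized proportionally to CC-length, so that after reparametrization on $[t_k^n,t_{k+1}^n]$ the lift obeys
\[
d_{{\rm CC}}\bigl(o,\lift(y^{(n,k)})_{t_k^n,t}\bigr)\le C\,\frac{t-t_k^n}{t_{k+1}^n-t_k^n}\,d_{{\rm CC}}\bigl(o,\mathbf{x}_{t_k^n,t_{k+1}^n}\bigr),\qquad t\in[t_k^n,t_{k+1}^n],
\]
with an absolute constant $C$. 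The existence of such nearly-geodesic horizontal paths follows from Chow--Rashevskii applied to the step-2 nilpotent group $G^{(2)}(V)$ together with standard CC-arclength reparametrization.

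Second, glue the blocks into a single path $x^{(n)}\in C^{1\hvar}([0,T],V)$ (the only freedom is a translation at each joint, which does not affect the signature). By Chen's relation (Theorem \ref{thm:Chen's-relation}) the lifts match at the dyadic nodes: $\lift(x^{(n)})_{t_k^n}=\mathbf{x}_{t_k^n}$ for every $k$. For $t\in[t_k^n,t_{k+1}^n]$, left-invariance of $d_{{\rm CC}}$ under $\otimes$ yields
\[
d_{{\rm CC}}(\mathbf{x}_t,\lift(x^{(n)})_t)=d_{{\rm CC}}(\mathbf{x}_{t_k^n,t},\lift(x^{(n)})_{t_k^n,t})\le d_{{\rm CC}}(o,\mathbf{x}_{t_k^n,t})+d_{{\rm CC}}(o,\lift(x^{(n)})_{t_k^n,t}).
\]
The first summand is at most $\|\mathbf{x}\|_{\rpHol\hHol}(T/2^n)^{\rpHol}$ by the Hölder hypothesis on $\mathbf{x}$, and the second is at most $C\,d_{{\rm CC}}(o,\mathbf{x}_{t_k^n,t_{k+1}^n})\le C\|\mathbf{x}\|_{\rpHol\hHol}(T/2^n)^{\rpHol}$ by the geodesic bound above. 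Hence
\[
\sup_{t\in[0,T]}d_{{\rm CC}}(\mathbf{x}_t,\lift(x^{(n)})_t)\le(1+C)\|\mathbf{x}\|_{\rpHol\hHol}(T/2^n)^{\rpHol}\xrightarrow[n\to\infty]{}0,
\]
which is exactly the claimed uniform convergence.

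The main obstacle I expect is the geodesic-existence claim used in the first step: realizing an arbitrary $\mathbf{g}\in G^{(2)}(V)$ as the signature of a $C^{1\hvar}$-path of CC-length close to $d_{{\rm CC}}(o,\mathbf{g})$ with a parametrization whose lift scales \emph{linearly} in CC-distance. This relies on the explicit nilpotent dilation $(1,x,\bbm x)\mapsto(1,\la x,\la^{2}\bbm x)$, which matches the $1/2$-scaling between the level-1 and level-2 components and ensures that halving the parameter interval halves the CC-distance of the lift; this is the technical heart of the argument (established in detail in the Friz--Victoir reference cited). The rest is triangle inequality and Chen's relation.
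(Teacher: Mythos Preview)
The paper does not supply a proof of this proposition; it is quoted verbatim as \cite[Proposition 8.12]{FV10b} and used as a black box. So there is no ``paper's own proof'' to compare against.

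That said, your argument is correct and is essentially the geodesic-approximation construction that Friz--Victoir themselves use. One small comment: the dilation structure you invoke at the end is not really what drives the linear bound $d_{{\rm CC}}(o,\lift(y^{(n,k)})_{t_k^n,t})\le C\,\tfrac{t-t_k^n}{t_{k+1}^n-t_k^n}\,d_{{\rm CC}}(o,\mathbf{x}_{t_k^n,t_{k+1}^n})$. That bound follows more directly from the \emph{definition} of $d_{{\rm CC}}$ as an infimum of $1$-variation lengths: once $y^{(n,k)}$ is reparametrized at constant $1$-variation speed, $d_{{\rm CC}}(o,\Sn(y^{(n,k)})_{0,s})$ is bounded by the $1$-variation of $y^{(n,k)}$ on $[0,s]$, which is exactly $s$ times the total length, and the total length is within a factor $C$ of $d_{{\rm CC}}(o,\mathbf{x}_{t_k^n,t_{k+1}^n})$ by the near-geodesic choice. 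The dilation is what underlies the homogeneous norm equivalence $d_{{\rm CC}}(o,\mathbf{g})\simeq|g|+|\bbm g|^{1/2}$, but you do not need that equivalence anywhere in this argument. Otherwise the proof is clean: Chen's relation pins the lifts at the dyadic nodes, and the H\"older bound plus triangle inequality handles the interior of each block.
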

If $1/3<\rpHol\le1/2$, $C^{\rpHol\hHol}([0,1],G^{(2)}(V))$ is called
the space of \term{weak geometric $\rpHol$-H\"older rough paths}
\cite{FV10b,FH14}.
\begin{thm}
{\rm (Existence and uniqueness of RDE solution; step-2 case of \cite[Theorem 10.14, p.222]{FV10b}
with \cite[Theorem 10.26, p.233]{FV10b})} \label{thm:Existence-of-RDE-sol}%
\\
Let $d,e\in\N$, $\rpHol\in(1/3,1/2]$ , and assume the following:

(i) $\cV:\R^{e}\to L(\R^{d},\R^{e})$ is in ${\rm Lip}^{\gamma}(\R^{e})$,
where $\gamma>1/\rpHol$,

(ii) $(x^{(n)})_{n\in\N}$ is a sequence in $C^{1\hvar}([0,T],\R^{d})$,
such that
\[
\sup_{n}d_{{\rm CC};\rpHol\hHol;[0,T]}\left(\lift(x^{(n)}),o\right)<\infty.
\]

(iii) ${\bf x}\in C^{\rpHol\hHol}([0,T],G^{(2)}(\R^{d}))$ satisfies
\[
\lim_{n\to\infty}d_{{\rm CC};0\hHol;[0,T]}(\lift(x^{(n)}),{\bf x})=0.
\]

(iv) $y_{0}^{(n)}\in\R^{e}$ is a sequence converging to some $y_{0}$. 

(v) $y^{(n)}$ is the solution of the ODE

\[
dy^{(n)}(t)=\cV(y^{(n)}(t))dx^{(n)}(t),\quad y^{(n)}(0)=y_{0}^{(n)}
\]
Then, $y^{(n)}$ converges in uniform topology to a unique limit $y$
in $C([0,T],\R^{d})$, i.e. $\lim_{n\to\infty}\left\Vert y^{(n)}-y\right\Vert _{L^{\infty}([0,T],\R^{d})}=0.$
\end{thm}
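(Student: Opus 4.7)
The plan follows the Davie--Lyons strategy for rough differential equations. First I would derive, for each smooth solution $y^{(n)}$, a local \emph{almost-multiplicative} expansion. Taylor-expanding $\cV \in \Lip^\gamma$ and iterating the ODE twice produces
\[
y^{(n)}_t - y^{(n)}_s = \cV(y^{(n)}_s)\, x^{(n)}_{s,t} + D\cV(y^{(n)}_s)\cV(y^{(n)}_s)\, \bbm{x}^{(n)}_{s,t} + R^{(n)}_{s,t},
\]
where $\bbm{x}^{(n)}_{s,t} := \int_s^t x^{(n)}_{s,r} \otimes dx^{(n)}_r$ denotes the second level of $\lift(x^{(n)})$ and the remainder obeys $\bigl|R^{(n)}_{s,t}\bigr| \le C\,|t-s|^{\rpHol\gamma}$, with $C$ depending only on $\|\cV\|_{\Lip^\gamma}$ and on $d_{\rpHol\hHol}(\lift(x^{(n)}),o)$. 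The crucial point is that, since $\gamma>1/\rpHol$, the remainder exponent $\rpHol\gamma$ strictly exceeds $1$, which is exactly what allows the rough-path sewing argument to close later.

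Hypothesis (ii) gives $\sup_n d_{\rpHol\hHol}(\lift(x^{(n)}),o) < \infty$, so the constant $C$ above is uniform in $n$. Combined with the bound $|y^{(n)}_t - y^{(n)}_s| \le \|\cV\|_\infty |x^{(n)}_{s,t}| + \|D\cV \cdot \cV\|_\infty |\bbm{x}^{(n)}_{s,t}| + |R^{(n)}_{s,t}|$, this yields a uniform estimate $|y^{(n)}_t - y^{(n)}_s| \lesssim |t-s|^{\rpHol}$. Together with the convergence $y_0^{(n)} \to y_0$ in (iv), the family $\{y^{(n)}\}$ is equicontinuous and bounded on $[0,T]$; Arzel\`a--Ascoli then extracts a subsequence $y^{(n_k)}$ converging uniformly to some $y \in C^{\rpHol\hHol}([0,T],\R^e)$.

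Next I would identify the limit. Passing $n_k \to \infty$ in the Davie expansion term-by-term uses hypothesis (iii): from $d_{\rpHol\hHol;0\hHol}(\lift(x^{(n_k)}),{\bf x}) \to 0$ one obtains $x^{(n_k)}_{s,t} \to x_{s,t}$ and $\bbm{x}^{(n_k)}_{s,t} \to \bbm{x}_{s,t}$, and together with uniform convergence of $y^{(n_k)}$ and continuity of $\cV$, $D\cV$, this shows that $y$ satisfies the analogous \emph{rough} Davie expansion driven by $\mathbf{x}$. The final step is uniqueness for this rough expansion: on any sufficiently small subinterval, two candidate solutions sharing the same initial point differ by a quantity that satisfies a contractive estimate in the $\rpHol$-H\"older metric (again powered by $\rpHol\gamma > 1$), so coincide locally, hence globally by concatenation up to $T$. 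Every subsequential limit therefore equals the same $y$, forcing convergence of the whole sequence.

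The main obstacle is the quantitative remainder estimate on $R^{(n)}_{s,t}$: it must be simultaneously uniform in $n$ and expressed purely in terms of quantities that stay controlled under (ii)--(iii), rather than in terms of $\|x^{(n)}\|_{1\hvar}$ (which blows up as the smooth approximations approach a genuine rough path). Producing the right bound requires a careful inductive Taylor argument that never picks up the $1$-variation norm, only the rough-path $\rpHol$-H\"older norm of $\lift(x^{(n)})$. Once this uniform local estimate is in hand, compactness and the continuity of the It\^o--Lyons solution map finish the argument.
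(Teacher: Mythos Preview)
The paper does not prove this theorem at all: it is stated as a quotation from the Friz--Victoir book (Theorems 10.14 and 10.26 there), with no argument given. Your sketch is a correct outline of the standard Davie--Lyons proof that underlies those cited results, so there is nothing to compare against in the paper itself; your approach is simply the orthodox one.
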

In \cite{FV10b}, $y$ in the above theorem is called the \term{solution of the RDE}
(rough differential equation)
\begin{equation}
dy(t)=\cV(y(t))d{\bf x}(t),\quad y(0)=y_{0},\ \label{eq:RDE}
\end{equation}
and written $y=\symb{\pi_{(\cV)}\big(0,y_{0};{\bf x}\big)}$. Then
we have the following stronger result.
\begin{thm}
{\rm (Existence and uniqueness of full RDE solution; step-2 case
of \cite[Theorem 10.36, p.242]{FV10b} with \cite[Theorem 10.38, p.246]{FV10b})}\label{thm:Existence-of-RDE-sol-full}
Let $d,e\in\N$, $\rpHol\in(1/3,1/2]$, and assume (i)-(iii) in Theorem
\ref{thm:Existence-of-RDE-sol}, and that ${\bf y}_{0}^{(n)}=(1,y_{0}^{(n)},\bbm y_{0}^{(n)})\in G^{(2)}(\R^{e})$
is a sequence converging to some ${\bf y}_{0}$. Then, ${\bf y}_{0}^{(n)}\otimes\lift\big(\pi_{(\cV)}(0,y_{0}^{(n)};x_{n})\big)$
converges in uniform topology to a unique limit ${\bf y}$ in $C([0,T],\R^{d})$,
i.e. 
\[
\lim_{n\to\infty}\sup_{t\in[0,T]}d_{{\rm CC}}({\bf y}^{(n)}(t),{\bf y}(t))=0.
\]

\end{thm}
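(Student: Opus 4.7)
The plan is to bootstrap from Theorem \ref{thm:Existence-of-RDE-sol}, which already delivers uniform convergence $y^{(n)}\to y$ in $C([0,T],\R^{e})$ for $y^{(n)} := \pi_{(\cV)}\bigl(0,y_{0}^{(n)};x^{(n)}\bigr)$. What remains is to upgrade this to uniform convergence of the full step-2 lifts ${\bf y}^{(n)} = {\bf y}_{0}^{(n)}\otimes\lift(y^{(n)})$ in the Carnot--Carathéodory metric, i.e., to show that the iterated integrals $\int_{s}^{t}y_{s,r}^{(n)}\otimes dy_{r}^{(n)}$ converge.

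The first step is to establish a uniform $\rpHol$-Hölder rough path bound
\[
\sup_{n}\,d_{{\rm CC};\rpHol\hHol;[0,T]}\bigl(\lift(y^{(n)}),\,o\bigr) < \infty.
\]
For the smooth equation $dy^{(n)} = \cV(y^{(n)})\,dx^{(n)}$, Davie--Euler type expansions give short-interval estimates of the form $d_{\rm CC}\bigl(\lift(y^{(n)})_{s,t},o\bigr)\le C(\cV)\,d_{\rm CC}\bigl(\lift(x^{(n)})_{s,t},o\bigr)$ once $\left|t-s\right|$ is small relative to the rough path length of $\lift(x^{(n)})$; patching these estimates via Chen's relation (Theorem \ref{thm:Chen's-relation}) and hypothesis (ii) of Theorem \ref{thm:Existence-of-RDE-sol} then yields the global uniform bound. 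The $\Lip^{\gamma}$-regularity of $\cV$ with $\gamma>1/\rpHol$ is exactly what is needed here to control the Taylor remainder of $\cV$ to the order appearing in the step-2 iterated integrals of $y^{(n)}$.

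With the uniform $\rpHol$-Hölder bound in place, the family $\{\lift(y^{(n)})\}$ is equi-Hölder in $d_{\rm CC}$; combining this with the zero-level uniform convergence $y^{(n)}\to y$ and an Arzelà--Ascoli / interpolation argument in the weak-geometric rough path space yields that $\lift(y^{(n)})$ converges uniformly in $d_{\rm CC}$ to a limit whose zeroth-level part is $y$; uniqueness pins down the limit and forces the full sequence (not just a subsequence) to converge. Finally, since left multiplication by ${\bf y}_{0}^{(n)}$ is continuous on $C([0,T],G^{(2)}(\R^{e}))$ in the uniform topology, the assumed convergence ${\bf y}_{0}^{(n)}\to{\bf y}_{0}$ implies ${\bf y}^{(n)}\to{\bf y} := {\bf y}_{0}\otimes\lift(y)$ uniformly. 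The main obstacle in this program is the uniform rough path bound on $\lift(y^{(n)})$: this genuinely requires the stronger hypothesis $\cV\in\Lip^{\gamma}$ with $\gamma>1/\rpHol$ (beyond the $\gamma>1$ that would suffice for a classical Picard-type argument), because the step-2 iterated integrals of $y^{(n)}$ must be compared with those of $x^{(n)}$ via a second-order Taylor expansion of $\cV$ about $y^{(n)}_{s}$.
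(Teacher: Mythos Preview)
The paper does not supply its own proof of this statement: it is recorded purely as a citation of \cite[Theorems 10.36 and 10.38]{FV10b}, with no argument given. So there is nothing in the paper to compare your sketch against directly.

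That said, your outline has a real gap at the step where you write ``uniqueness pins down the limit and forces the full sequence (not just a subsequence) to converge.'' Uniform convergence $y^{(n)}\to y$ together with a uniform $\rpHol$-H\"older bound on $\lift(y^{(n)})$ does \emph{not}, by itself, force convergence of the second-level iterated integrals: two geometric rough paths can share the same first-level projection yet differ by a nontrivial area term. The ``uniqueness'' you invoke is precisely the uniqueness of full RDE solutions, which is part of what the theorem asserts, so as written the argument is circular. What actually closes this gap in \cite{FV10b} is not a compactness-plus-uniqueness argument but a direct identification: one expands $\lift(y^{(n)})_{s,t}$ via the Davie/Euler scheme in terms of $\cV(y_{s}^{(n)})$, $D\cV(y_{s}^{(n)})\cV(y_{s}^{(n)})$, and the increments of $\lift(x^{(n)})$, with a remainder of order $|t-s|^{3\rpHol}$. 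Since $\lift(x^{(n)})\to{\bf x}$ by hypothesis (iii) and $y^{(n)}\to y$ by Theorem \ref{thm:Existence-of-RDE-sol}, this expansion converges term by term, and the limit is thereby \emph{constructed} (and simultaneously shown to be unique), rather than merely extracted from a subsequence. Your first step (the uniform rough path bound via short-time Davie estimates and Chen's relation) is correct and is indeed used; it is the passage from that bound to convergence that needs the explicit algebraic link between $\lift(y^{(n)})$ and $\lift(x^{(n)})$.
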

In \cite{FV10b}, ${\bf y}$ in the above theorem is called the \term{solution of the full RDE}
\begin{equation}
d{\bf y}(t)=\cV(y(t))d{\bf x}(t),\quad{\bf y}(0)={\bf y}_{0},\ \label{eq:RDE-full}
\end{equation}
and written ${\bf y}=\symb{\bpi_{(\cV)}\big(0,y_{0};{\bf x}\big)}$.
$\bpi_{(\cV)}$ is called the \term{It\^o--Lyons map}.
\begin{thm}
\label{thm:fullRDE-conti}Suppose $\rpHol'\le\rpHol$ and $R>0$,
and let $\cV:\R^{e}\to L(\R^{d},\R^{e})$ is in ${\rm Lip}^{\gamma}(\R^{e})$,
for $\gamma>1/\rpHol\ge1$, and let 
\begin{align*}
C_{\le R}^{\rpHol\hHol} & =\symb{C_{\le R}^{\rpHol\hHol}([0,T],G^{(2)}(\R^{d}))}\\
 & :=\left\{ {\bf x}\in C([0,T],G^{(2)}(\R^{d}));\ d_{{\rm CC};\rpHol\hHol;[0,1]}({\bf x},o)\le R\right\} .
\end{align*}
Then, the map
\begin{align*}
\R^{e}\times\Big(C_{\le R}^{\rpHol\hHol},d_{{\rm CC},\rpHol'\hHol}\Big) & \to\Big(C^{\rpHol\hHol}\bigl([0,T],G^{(2)}(\R^{e})\bigr),d_{{\rm CC},\rpHol'\hHol}\Big)\\
(y_{0},{\bf x}) & \mapsto\boldsymbol{\pi}_{(\cV)}(0,y_{0};{\bf x})
\end{align*}
is uniformly continuous. \end{thm}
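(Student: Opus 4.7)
The plan is to derive uniform continuity from a local Lipschitz estimate for the It\^o--Lyons map at an intermediate H\"older exponent, combined with H\"older interpolation on bounded $\rpHol$-H\"older balls.

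First I would fix an intermediate scale $\rpHol''\in(\rpHol',\rpHol)$ chosen close enough to $\rpHol$ that $\gamma>1/\rpHol''$ still holds; this is possible because $\gamma>1/\rpHol$ by assumption. Observing the embedding $C_{\le R}^{\rpHol\hHol}([0,T],G^{(2)}(\R^{d}))\hookrightarrow C_{\le RT^{\rpHol-\rpHol''}}^{\rpHol''\hHol}$, the two drivers ${\bf x},{\bf x}'$ lie in a bounded $\rpHol''$-H\"older ball. The local Lipschitz estimate for the full It\^o--Lyons map on such a ball (the step-2 H\"older version of the core estimate behind Theorems \ref{thm:Existence-of-RDE-sol} and \ref{thm:Existence-of-RDE-sol-full}; see \cite[Thm.~10.26 and Thm.~10.38]{FV10b}) then supplies a constant $L=L(R,T,\cV,\gamma,\rpHol,\rpHol'')$ such that
\[
d_{{\rm CC},\rpHol''\hHol}\bigl(\bpi_{(\cV)}(0,y_{0};{\bf x}),\,\bpi_{(\cV)}(0,y_{0}';{\bf x}')\bigr)\le L\bigl(|y_{0}-y_{0}'|+d_{{\rm CC},\rpHol''\hHol}({\bf x},{\bf x}')\bigr).
\]

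Next, I would invoke the H\"older interpolation inequality for rough paths on a bounded $\rpHol$-H\"older ball (cf.\ \cite[Prop.~8.15 / Lem.~8.16]{FV10b}): since ${\bf x},{\bf x}'\in C_{\le R}^{\rpHol\hHol}$,
\[
d_{{\rm CC},\rpHol''\hHol}({\bf x},{\bf x}')\le C_{1}\,d_{{\rm CC},\rpHol'\hHol}({\bf x},{\bf x}')^{\theta},
\]
with $\theta:=(\rpHol-\rpHol'')/(\rpHol-\rpHol')\in(0,1)$ and $C_{1}=C_{1}(R,T,\rpHol,\rpHol',\rpHol'')$. Writing ${\bf y}:=\bpi_{(\cV)}(0,y_{0};{\bf x})$ and ${\bf y}':=\bpi_{(\cV)}(0,y_{0}';{\bf x}')$ and combining with the trivial output-side inequality $d_{{\rm CC},\rpHol'\hHol}({\bf y},{\bf y}')\le T^{\rpHol''-\rpHol'}d_{{\rm CC},\rpHol''\hHol}({\bf y},{\bf y}')$ yields
\[
d_{{\rm CC},\rpHol'\hHol}({\bf y},{\bf y}')\le T^{\rpHol''-\rpHol'}L\bigl(|y_{0}-y_{0}'|+C_{1}\,d_{{\rm CC},\rpHol'\hHol}({\bf x},{\bf x}')^{\theta}\bigr).
\]
Since $\theta>0$, the right-hand side vanishes uniformly as $|y_{0}-y_{0}'|+d_{{\rm CC},\rpHol'\hHol}({\bf x},{\bf x}')\to0$, which is exactly uniform continuity (with modulus $O(\delta^{\theta}+\delta)$).

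The main obstacle is the Lipschitz estimate in the first display. Although its substance is in \cite{FV10b}, one must verify that the step-2 H\"older formulation---with \emph{simultaneous} variation of the initial condition and of the driver, and with the output measured in the full $G^{(2)}(\R^{e})$-valued H\"older metric rather than merely in $\R^{e}$---gives the displayed bound with a constant depending only on the a priori radius $R$. Tracing this through Theorems~10.26 and 10.38 of \cite{FV10b}, together with the standard comparison of the inhomogeneous and homogeneous H\"older distances on bounded balls, is where essentially all the technical work sits. The interpolation step is then routine, but must be applied increment-by-increment because $d_{{\rm CC}}$ is only a left-invariant metric on $G^{(2)}(V)$ rather than coming from a norm.
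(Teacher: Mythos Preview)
Your outline is sound, but you are reconstructing from scratch a result that the paper simply quotes. The paper's entire proof is a one-line citation: set $p=1/\rpHol$, $p'=1/\rpHol'$ and $\omega(s,t)=|s-t|$ in \cite[Corollary~10.40, p.~247]{FV10b}, which already states the uniform continuity of the full It\^o--Lyons map in exactly this $(\rpHol,\rpHol')$-H\"older form. Your route---local Lipschitz at an intermediate exponent $\rpHol''$ plus H\"older interpolation on the bounded $\rpHol$-ball---is in fact how that corollary is obtained in \cite{FV10b}, so you are essentially reproving it rather than invoking it. The payoff of your approach is that it exposes the mechanism (and gives an explicit modulus $O(\delta^{\theta})$); the cost is the non-trivial verification you flag yourself, namely that Theorems~10.26 and~10.38 combine to give a genuine Lipschitz bound for the \emph{full} lifted solution in the homogeneous $G^{(2)}(\R^{e})$-valued H\"older distance. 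That step is done in \cite{FV10b} on the way to Corollary~10.40, so there is no error in your plan, only redundant effort.
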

\begin{proof}
Set $p=1/\rpHol$, $p'=1/\rpHol'$ and $\omega(s,t)=|s-t|$ in \cite[Corollary 10.40, p.247]{FV10b}.\end{proof}
\begin{thm}
{\rm ($N=2$ case of \cite[Theorem A.12, p.583]{FV10b})}\label{thm:G2process-Lq-bound}%\\
 \global\long\def\ordera{\gamma_{1}}
\global\long\def\orderb{\gamma_{2}}
\global\long\def\ordera{\mathfrak{a}}
\global\long\def\orderb{\mathfrak{b}}
%\\
Let $0\le\orderb<\ordera$, and $(\bX_{t}:t\in[0,T])$ be a continuous
$G^{(2)}\big(V\big)$-valued process. Then there exists $q_{0}=q_{0}(\ordera,\orderb)$
and $C=C(\ordera,\orderb,T)$ such that the following holds: if

\[
\left\Vert d_{{\rm CC}}(\bX_{s},\bX_{t})\right\Vert _{L^{q}(\Prob)}\le M\left|t-s\right|^{\ordera},\quad\forall s,t\in[0,T]\ 
\]
holds for some for some $q\ge q_{0}$, then we also have 

\[
\left\Vert d_{{\rm CC};\orderb\hHol:[0,T]}\left(\bX,o\right)\right\Vert _{L^{q}(\Prob)}\le CM
\]

\end{thm}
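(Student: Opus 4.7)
The statement is a Kolmogorov--type continuity criterion for a process taking values in the metric space $(G^{(2)}(V), d_{{\rm CC}})$. The plan is to apply the Garsia--Rodemich--Rumsey (GRR) inequality pathwise, then bound the resulting moment via Fubini. To begin, since $d_{{\rm CC}}$ is left-invariant on the Lie group $G^{(2)}(V)$, one has
\[
d_{{\rm CC}}(\bX_{s,t}, o) \;=\; d_{{\rm CC}}(\bX_s^{-1}\otimes \bX_t, o) \;=\; d_{{\rm CC}}(\bX_s, \bX_t),
\]
so the homogeneous H\"older seminorm $d_{{\rm CC};\orderb\hHol;[0,T]}(\bX, o)$ coincides with the classical metric seminorm $\sup_{0 \le s < t \le T} d_{{\rm CC}}(\bX_s, \bX_t)/|t-s|^\orderb$. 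This reduces the claim to a standard H\"older regularity bound for a process valued in a metric space.

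The main tool is the metric-space form of GRR: for continuous strictly increasing $\Psi, p$ with $\Psi(0) = p(0) = 0$, defining pathwise
\[
F(\omega) := \int_0^T\!\!\int_0^T \Psi\!\left(\frac{d_{{\rm CC}}(\bX_s(\omega), \bX_t(\omega))}{p(|t-s|)}\right) ds\, dt,
\]
one has for every $s, t \in [0,T]$ the deterministic bound
\[
d_{{\rm CC}}(\bX_s(\omega), \bX_t(\omega)) \;\le\; 8 \int_0^{|t-s|} \Psi^{-1}\!\bigl(4F(\omega)/u^2\bigr)\, dp(u).
\]
I apply this with $\Psi(x) := x^q$ and $p(u) := u^{\orderb + 2/q}$. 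Direct computation of the right-hand integral yields (for $\orderb>0$) the pathwise estimate
\[
\sup_{0 \le s<t \le T} \frac{d_{{\rm CC}}(\bX_s(\omega), \bX_t(\omega))}{|t-s|^\orderb} \;\le\; C_1(\orderb, q)\, F(\omega)^{1/q}, \qquad C_1 = \frac{8\cdot 4^{1/q}(\orderb + 2/q)}{\orderb}.
\]

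Taking $L^q(\Prob)$-norms and applying Fubini together with the hypothesis,
\[
\Ex[F] \;\le\; M^q \int_0^T\!\!\int_0^T |t-s|^{q(\ordera - \orderb) - 2}\, ds\, dt,
\]
which is finite precisely when $q(\ordera - \orderb) > 1$. Hence I set $q_0 := 1 + 1/(\ordera - \orderb)$; for $q \ge q_0$ the integral is bounded by $C_2(\ordera, \orderb, T) M^q$, and combining with the pathwise bound yields
\[
\bigl\| d_{{\rm CC};\orderb\hHol;[0,T]}(\bX, o) \bigr\|_{L^q(\Prob)} \;\le\; C_1(\orderb,q)\, \Ex[F]^{1/q} \;\le\; C(\ordera, \orderb, T)\, M,
\]
after verifying that the product $C_1 \cdot C_2^{1/q}$ remains uniformly bounded for $q \ge q_0$ (one checks $C_1 \to 8$ and $C_2^{1/q} \to T^{\ordera-\orderb}$ as $q \to \infty$, so both are bounded on $[q_0,\infty)$).

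The main obstacle is bookkeeping rather than conceptual: the exponents in GRR must be tuned so as to land on H\"older regularity of order exactly $\orderb$ while preserving finiteness of $\Ex[F]$, and the resulting constants must be made uniform in $q$. The edge case $\orderb = 0$ requires a slight modification, namely choosing $p(u) = u^{\epsilon + 2/q}$ for some fixed $\epsilon \in (0, \ordera)$ (since otherwise $C_1\propto 1/\orderb$ diverges); the resulting $\epsilon$-H\"older bound trivially controls the diameter, which is all the $\orderb = 0$ claim demands.
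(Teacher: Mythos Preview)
Your proof is correct. Note that the paper does not supply its own proof of this theorem: it is stated as the $N=2$ case of \cite[Theorem~A.12]{FV10b} and cited without argument. Your Garsia--Rodemich--Rumsey approach is precisely the standard route used in that reference, so there is no meaningful difference to compare.

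One minor remark: the uniformity-in-$q$ check you perform is essential, since the statement demands $C=C(\mathfrak{a},\mathfrak{b},T)$ independent of $q$; your verification that $C_1\,C_2^{1/q}$ stays bounded on $[q_0,\infty)$ is correct. The $\mathfrak{b}=0$ patch via an auxiliary $\epsilon$-H\"older bound is also fine, though one could alternatively absorb this case by observing that $d_{{\rm CC};0\text{-H\"ol}}(\bX,o)\le T^{\epsilon}\,d_{{\rm CC};\epsilon\text{-H\"ol}}(\bX,o)$ directly.
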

{}
\begin{thm}
{\em (Kolmogorov $L^{q}$ convergence condition for rough paths \cite[Proposition A.15, p.587]{FV10b})}\label{thm:roughPath-Lq-convergence-G2}
Let ${\bf x}^{(n)}=(1,x^{(n)},\bbm x^{(n)})$ $(n\in\N)$ and ${\bf x}^{(\infty)}=$
$(1,x^{(\infty)},$ $\bbm x^{(\infty)})$ be continuous $G^{(2)}(\R^{d})$-valued
processes defined on $[0,T]$. Let $q\in[1,\infty)$ and assume that
\begin{align}
 & \lim_{n\to\infty}\bigl\Vert d_{{\rm CC}}({\bf x}_{t}^{(n)},{\bf x}_{t}^{(\infty)})\bigr\Vert_{L^{q}(\Prob)}=0\quad\forall t\in[0,T],\label{eq:KC-cond-converge}\\
 & \sup_{1\le n\le\infty}\bigl\Vert d_{{\rm CC};\al\hHol:[0,T]}({\bf x}^{(n)},o)\bigr\Vert_{L^{q}(\Prob)}<\infty,\label{eq:KC-cond-bound}
\end{align}
then for $\al'\in(0,\al)$,
\[
\lim_{n\to\infty}\bigl\Vert d_{{\rm CC},\al'\hHol;[0,T]}({\bf x}^{(n)},{\bf x}^{(\infty)})\bigr\Vert_{L^{q}(\Prob)}=0.
\]

\end{thm}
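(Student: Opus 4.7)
The plan is to adapt the classical Kolmogorov-type convergence argument to the rough-path (group-valued) setting, combining the pointwise $L^q$ convergence (\ref{eq:KC-cond-converge}) with the uniform $L^q$-Hölder bound (\ref{eq:KC-cond-bound}) via an interpolation. The first step is to upgrade (\ref{eq:KC-cond-converge}) from pointwise $L^q$ convergence of ${\bf x}_t^{(n)}$ to pointwise $L^q$ convergence of the increment ${\bf x}_{s,t}^{(n)} = ({\bf x}_s^{(n)})^{-1} \otimes {\bf x}_t^{(n)}$ for each fixed $(s,t) \in [0,T]_<^2$: the group operations on $G^{(2)}(\R^d)$ are polynomial in the coordinates, so $L^q$-convergence (hence convergence in probability) of ${\bf x}_s^{(n)}$ and ${\bf x}_t^{(n)}$ implies convergence in probability of the increment, and the $L^q$-bound $d_{{\rm CC}}({\bf x}_{s,t}^{(n)}, o) \le T^\al \cdot d_{{\rm CC};\al\hHol:[0,T]}({\bf x}^{(n)}, o)$ coming from (\ref{eq:KC-cond-bound}), together with its analogue for ${\bf x}^{(\infty)}$, provides the uniform integrability needed to promote convergence in probability to $L^q$-convergence via Vitali's theorem.

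Second, the triangle inequality for $d_{{\rm CC}}$ together with (\ref{eq:KC-cond-bound}) yields
\[ \sup_{n} \bigl\| d_{{\rm CC}}({\bf x}_{s,t}^{(n)}, {\bf x}_{s,t}^{(\infty)}) \bigr\|_{L^q(\Prob)} \le C |t-s|^\al, \]
uniformly in $(s,t) \in [0,T]_<^2$, with $C$ depending only on the uniform bound in (\ref{eq:KC-cond-bound}). I would then apply a Garsia--Rodemich--Rumsey-type inequality, in the spirit of the GRR argument behind Theorem \ref{thm:G2process-Lq-bound}, to bound the $\al'$-Hölder modulus of the pair $({\bf x}^{(n)}, {\bf x}^{(\infty)})$, measured via $d_{{\rm CC}}$, pathwise by an integral of the form
\[ \iint_{[0,T]_<^2} \frac{d_{{\rm CC}}({\bf x}_{s,t}^{(n)}, {\bf x}_{s,t}^{(\infty)})^q}{|t-s|^{q\al' + 2}} \, ds\, dt. \]
Taking expectations and applying Fubini, the integrand in the resulting $L^q$-bound is dominated by $C|t-s|^{q(\al - \al') - 2}$ (by the Hölder bound just displayed) and tends to zero pointwise in $(s,t)$ (by the first step). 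If this dominating function fails to be integrable for the given $q$, one inserts an auxiliary exponent $\al'' \in (\al', \al)$ and applies the interpolation twice; in any case dominated convergence yields the claim.

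The main obstacle is the first step, converting pointwise $L^q$ convergence of the group-valued process into pointwise $L^q$ convergence of its (nonlinear) two-parameter increment. The group product on $G^{(2)}(\R^d)$ is only \emph{locally} Lipschitz with respect to $d_{{\rm CC}}$, with Lipschitz constant growing in the $d_{{\rm CC}}$-magnitude of the arguments, so (\ref{eq:KC-cond-bound}) must be exploited not as a mere $L^q$-boundedness hypothesis but as a tail-control tool, in order to extract enough uniform integrability to pass from convergence in probability to $L^q$-convergence of increments. Granting this step, the Hölder bound and the GRR interpolation that follow are essentially routine.
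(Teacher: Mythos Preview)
The paper does not prove this theorem; it is quoted verbatim from \cite[Proposition~A.15]{FV10b} and used as a black box. There is therefore no in-paper argument to compare against, only the reference.

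Your sketch is in the right spirit, but note two points. First, the route actually taken in \cite{FV10b} is not a GRR argument but a direct interpolation: one shows that pointwise $L^q$ convergence plus the uniform $\al$-H\"older bound forces $d_0({\bf x}^{(n)},{\bf x}^{(\infty)})\to 0$ in $L^q$ (via tightness/uniform integrability, essentially your first step), and then uses the pathwise interpolation inequality
\[
d_{{\rm CC},\al'\hHol}({\bf x}^{(n)},{\bf x}^{(\infty)})
\;\lesssim\;
d_{0}({\bf x}^{(n)},{\bf x}^{(\infty)})^{1-\al'/\al}
\bigl(d_{{\rm CC},\al\hHol}({\bf x}^{(n)},o)+d_{{\rm CC},\al\hHol}({\bf x}^{(\infty)},o)\bigr)^{\al'/\al},
\]
followed by H\"older's inequality in $\Prob$. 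This is shorter and avoids the double integral.

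Second, if you do pursue the GRR route, be aware that the map $(s,t)\mapsto d_{{\rm CC}}({\bf x}_{s,t}^{(n)},{\bf x}_{s,t}^{(\infty)})$ is \emph{not} sub-additive in the naive sense: because $d_{{\rm CC}}$ is only left-invariant on $G^{(2)}(\R^d)$, Chen's relation ${\bf x}_{s,u}={\bf x}_{s,t}\otimes{\bf x}_{t,u}$ gives a bound of the form
\[
d_{{\rm CC}}({\bf x}_{s,u}^{(n)},{\bf x}_{s,u}^{(\infty)})
\le C\Bigl(d_{{\rm CC}}({\bf x}_{t,u}^{(n)},{\bf x}_{t,u}^{(\infty)})
+\bigl(1\vee d_{{\rm CC}}({\bf x}_{t,u}^{(\infty)},o)\bigr)\,d_{{\rm CC}}({\bf x}_{s,t}^{(n)},{\bf x}_{s,t}^{(\infty)})\Bigr),
\]
so the GRR lemma must be the version that tolerates such a multiplicative control (as in \cite[Appendix~A]{FV10b}), and the uniform $\al$-H\"older bound on ${\bf x}^{(\infty)}$ must be fed in to control the extra factor. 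Your outline glosses over this; it is fixable, but it is exactly why the interpolation proof is preferred.
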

Note that (\ref{eq:KC-cond-converge}) is equivalent to

\[
\lim_{n}\bigl\Vert x_{t}^{(n)}-x_{t}^{(\infty)}\bigr\Vert_{L^{q}(\Prob,\fg)}=\lim_{n}\bigl\Vert\bbm x_{t}^{(n)}-\bbm x_{t}^{(\infty)}\bigr\Vert_{L^{q/2}(\Prob,\fg\otimes\fg)}=0,\quad\forall t\in[0,T],
\]
and (\ref{eq:KC-cond-bound}) is equivalent to

\[
\sup_{n}\Bigl\Vert\bigl\Vert x^{(n)}\bigr\Vert_{\al\hHol;[0,T]}\Bigr\Vert_{L^{q}(\Prob)}<\infty,\quad\sup_{n}\Bigl\Vert\bigl\Vert\bbm x^{(n)}\bigr\Vert_{2\al\hHol;[0,T]}\Bigr\Vert_{L^{q/2}(\Prob)}<\infty.
\]

\section{Estimate for $X_{s,t}^{(j)}$}

Recall the definitions of $X^{(j)}$ and $X$ (Eqs. (\ref{eq:def:X^(j)}),
(\ref{eq:X^(j)=00003DcE(W,1)}), (\ref{eq:def:X})), and set

\begin{equation}
\symb{X_{s,t}}:=X_{t}-X_{s},\quad\symb{X_{s,t}^{(j)}}:=X_{t}^{(j)}-X_{s}^{(j)}.\label{eq:def:X_{s,t}}
\end{equation}
In this section we prove an estimate for $X_{s,t}^{(j)}$ (Prop. \ref{thm:X^(j)-L^p-bdd}).

\begin{lem}
\label{thm:BesovNorm(p,infty)OfRectangle}\thlab{thm:BesovNorm(p,infty)OfRectangle}For
$D\subset\R^{2}$, let $\bOne_{D}:\R^{2}\to\R$ be the characteristic
function of $D$. Let $x_{1},x_{2},y_{1},y_{2}\in\R$, $a:=x_{2}-x_{1}>0$,
$b:=y_{2}-y_{1}>0$, and $f:=\bOne_{[x_{1},x_{2}]\times[y_{1},y_{2}]}$.
Suppose $p\in[1,\infty),\ \besovorder>0$ and $1-\besovorder p>0$
i.e. $\besovorder\in(0,1/p)$. Then 

\[
\left\Vert f\right\Vert _{B_{p,\infty}^{\besovorder}(\R^{2})}\simeq\left\Vert f\right\Vert _{B_{p,\infty}^{\besovorder}(\R^{2})}'\le\left(ab\right)^{1/p}\left(1+4^{1/p}\min\left\{ a,b\right\} ^{-\besovorder}\right).
\]
Especially if $a\le b\wedge1$, 

\[
\left\Vert f\right\Vert _{B_{p,\infty}^{\besovorder}(\R^{2})}\simeq\left\Vert f\right\Vert _{B_{p,\infty}^{\besovorder}(\R^{2})}'\le5a^{1/p-\besovorder}b^{1/p}.
\]
\end{lem}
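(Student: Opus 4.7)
The plan is to compute the equivalent norm $\|f\|'_{B_{p,\infty}^{\besovorder}}$ furnished by Proposition \ref{thm:B_(p,infty)-character}, since the hypothesis $\besovorder\in(0,1/p)\subset(0,1)$ places us exactly in its scope. Writing $R:=[x_1,x_2]\times[y_1,y_2]$, the $L^p$ piece is immediate: $\|f\|_{L^p}=|R|^{1/p}=(ab)^{1/p}$. The substance of the proof is therefore to estimate the seminorm
\[
|f|'_{B_{p,\infty}^{\besovorder}} \;=\; \sup_{h\in\R^2\setminus\{0\}}\frac{\|f-\tau_h f\|_{L^p}}{|h|^{\besovorder}}.
\]

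For any $h=(h_1,h_2)\in\R^2$, the difference $f-\tau_h f$ equals $\bOne_R-\bOne_{R-h}$ and is supported on the symmetric difference $R\triangle(R-h)$, where $|f-\tau_h f|\le 1$. Therefore $\|f-\tau_h f\|_{L^p}^p\le|R\triangle(R-h)|$. Assuming, without loss of generality, $a\le b$, I would split on the size of $h$ relative to $a=\min\{a,b\}$:
\begin{itemize}
\item \textbf{Small $h$} ($|h|\le a$): then $|h_1|\le a$ and $|h_2|\le b$, so $R\cap(R-h)$ is a rectangle with side lengths $a-|h_1|$ and $b-|h_2|$. A direct expansion gives
$|R\triangle(R-h)| = 2ab-2(a-|h_1|)(b-|h_2|) \le 2(|h_1|\,b+a\,|h_2|)\le 2|h|(a+b)\le 4|h|\,b$.
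Hence $\|f-\tau_h f\|_{L^p}/|h|^{\besovorder}\le 4^{1/p}|h|^{1/p-\besovorder}b^{1/p}\le 4^{1/p}a^{1/p-\besovorder}b^{1/p}$, using $1/p-\besovorder>0$ and $|h|\le a$.
\item \textbf{Large $h$} ($|h|>a$): trivially $|R\triangle(R-h)|\le 2ab$, so $\|f-\tau_h f\|_{L^p}/|h|^{\besovorder}\le (2ab)^{1/p}\,a^{-\besovorder}=2^{1/p}a^{1/p-\besovorder}b^{1/p}$.
\end{itemize}

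Taking the supremum yields $|f|'_{B_{p,\infty}^{\besovorder}}\le 4^{1/p}a^{1/p-\besovorder}b^{1/p}=4^{1/p}\min\{a,b\}^{-\besovorder}(ab)^{1/p}$, and combined with $\|f\|_{L^p}=(ab)^{1/p}$ this gives the first displayed inequality. For the refinement, when $a\le b\wedge 1$ we have $a\le 1$, hence $(ab)^{1/p}=a^{1/p}b^{1/p}\le a^{1/p-\besovorder}b^{1/p}$, and adding the two contributions yields $(1+4^{1/p})a^{1/p-\besovorder}b^{1/p}\le 5\,a^{1/p-\besovorder}b^{1/p}$ since $4^{1/p}\le 4$.

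There is no real obstacle here: the argument is a clean symmetric-difference computation followed by a small-$h$/large-$h$ dichotomy governed by $\min\{a,b\}$. The only point worth care is the case analysis ensuring that the bound $|R\triangle(R-h)|\le 2(|h_1|b+a|h_2|)$ is valid in the ``small $h$'' regime (where both $|h_1|\le a$ and $|h_2|\le b$ hold), while the trivial volume bound $2ab$ suffices otherwise; the exponent condition $\besovorder<1/p$ is precisely what makes the small-$h$ estimate integrable in the supremum.
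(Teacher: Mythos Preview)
Your proof is correct and follows the same approach as the paper: invoke Proposition~\ref{thm:B_(p,infty)-character} and carry out an elementary symmetric-difference computation for $\|f-\tau_h f\|_{L^p}$. Your small-$h$/large-$h$ dichotomy at the threshold $|h|=\min\{a,b\}$ is in fact a cleaner organization than the more detailed case split the paper has in mind, but the content is the same.
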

\begin{proof}
By Lemma \ref{thm:B_(p,infty)-character} and some elementary (but
rather lengthy) calculations.%
\end{proof}
\begin{lem}
\label{thm:B2infty^{1/2}(R^2)cond-leng}\thlab{thm:B2infty\textasciicircum{}\{1/2\}(R\textasciicircum{}2)cond-2}
Let $D\subset\R^{2}$ be a bounded domain s.t. the boundary $\di D$
is a curve with a finite length $\leng(\di D)\in(0,\infty)$. Then
$\bOne_{D}\in B_{2,\infty}^{\besovorder}(\R^{2})\ $ for all $\besovorder\in(0,1/2].$
More precisely,

\begin{equation}
\left|\bOne_{D}\right|_{B_{2,\infty}^{\besovorder}(\R^{2})}'\le\leng(\di D)\diam(D)^{1/2-\besovorder}\ \label{eq:B2infty^{1/2}(R^2)cond-leng1}
\end{equation}
where $\diam(D)$ is the diameter of $D$. Hence there exists $C=C(\besovorder)>0$
such that

\begin{equation}
\left\Vert \bOne_{D}\right\Vert _{B_{2,\infty}^{\besovorder}(\R^{2})}\le C(\besovorder)\left(\diam(D)+\leng(\di D)\diam(D)^{1/2-\besovorder}\right)\ \label{eq:B2infty^{1/2}(R^2)cond-leng2}
\end{equation}
\end{lem}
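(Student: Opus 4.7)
The plan is to apply Proposition \ref{thm:B_(p,infty)-character}, which reduces the Besov seminorm $|\bOne_D|_{B_{2,\infty}^{\besovorder}}'$ to the supremum
\[
\sup_{h\in\R^{2}\setminus\{0\}}\frac{\|\bOne_D - \tau_h \bOne_D\|_{L^2}}{|h|^{\besovorder}}.
\]
The starting observation is that $\bOne_D - \tau_h \bOne_D$ takes values only in $\{-1,0,1\}$, so that
\[
\|\bOne_D - \tau_h \bOne_D\|_{L^2}^2 \;=\; \Leb\bigl(D \triangle (D-h)\bigr),
\]
where $\triangle$ denotes symmetric difference. The lemma is thereby reduced to a geometric estimate on symmetric differences of $D$ with its translates.

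The central geometric input is the perimeter (sweep) estimate
\[
\Leb\bigl(D\triangle(D-h)\bigr) \;\le\; \leng(\di D)\cdot|h|,
\]
which I would obtain either from the standard BV inequality $\|\tau_h u - u\|_{L^1}\le|h|\,\|Du\|_{\text{TV}}$ applied to $u=\bOne_D$ (with $\|D\bOne_D\|_{\text{TV}}$ equal to $\leng(\di D)$ under the rectifiable boundary hypothesis), or directly by parametrizing $\di D$ by arc length $\gamma$ and noting that $D\triangle(D-h)$ is contained in the region $\bigcup_{s}[\gamma(s),\gamma(s)+h]$ swept by $\di D$ under translation by $h$, whose Lebesgue measure is at most $\leng(\di D)\cdot|h|$. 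Three ancillary facts will also be useful: the trivial bound $\Leb(D\triangle(D-h))\le 2\Leb(D)$; the volume estimate $\Leb(D)\le\tfrac{1}{2}\diam(D)\leng(\di D)$ (e.g.\ by comparison with the convex hull of $D$); and the observation that $D\cap(D-h)=\emptyset$ once $|h|>\diam(D)$, so that the symmetric-difference measure saturates at $2\Leb(D)$ in that regime.

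Then I would split on the size of $|h|$. For $|h|\le\diam(D)$, using $\besovorder\le 1/2$, the perimeter bound gives
\[
\frac{\|\bOne_D - \tau_h \bOne_D\|_{L^2}^2}{|h|^{2\besovorder}} \;\le\; \leng(\di D)\,|h|^{1-2\besovorder} \;\le\; \leng(\di D)\,\diam(D)^{1-2\besovorder}.
\]
For $|h|>\diam(D)$, the trivial bound together with $2\Leb(D)\le\diam(D)\leng(\di D)$ yields the same right-hand side. Taking square roots produces the seminorm bound (\ref{eq:B2infty^{1/2}(R^2)cond-leng1}), possibly after using $\leng(\di D)\ge 2\diam(D)$ to consolidate constants. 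The full-norm bound (\ref{eq:B2infty^{1/2}(R^2)cond-leng2}) then follows by adding the $L^2$ norm $\|\bOne_D\|_{L^2}=\Leb(D)^{1/2}\le C\diam(D)$ and invoking the equivalence $\|\cdot\|_{B_{2,\infty}^{\besovorder}}\simeq\|\cdot\|_{B_{2,\infty}^{\besovorder}}'$ from Proposition \ref{thm:B_(p,infty)-character}.

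The only step requiring genuine care is the perimeter/sweep estimate, since the hypothesis provides only that $\di D$ is a rectifiable curve rather than a smooth one; everything else is a routine case split on $|h|$ versus $\diam(D)$ together with elementary manipulations.
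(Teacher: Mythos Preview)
Your proposal is correct and follows essentially the same route as the paper: reduce to the symmetric-difference measure via Proposition~\ref{thm:B_(p,infty)-character}, bound it by the sweep estimate $\Leb(D\triangle(D-h))\le\leng(\di D)|h|$ for $|h|\le\diam(D)$ and by $2\Leb(D)$ for $|h|>\diam(D)$, then combine. The only cosmetic difference is that the paper handles the large-$|h|$ regime via $\Leb(D)\le\tfrac{\pi}{4}\diam(D)^{2}$ together with $\diam(D)\le\tfrac{1}{2}\leng(\di D)$, whereas you invoke $\Leb(D)\le\tfrac{1}{2}\diam(D)\leng(\di D)$ directly; both yield the same bound $(L\delta^{1-2\besovorder})^{1/2}$.
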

\begin{proof}
Let $\symb L=\leng(\di D)$ and $\symb{\delta}:=\diam(D)$. Let $\Leb(A)$
denote the Lebesgue measure of $A\subset\R^{2}$. Then

\[
\left\Vert \bOne_{D}(\cdot+x)-\bOne_{D}\right\Vert _{L^{2}}^{2}=\int_{\R^{2}}\left|\bOne_{D}(y+x)-\bOne_{D}(y)\right|^{2}dy\ 
\]

\begin{equation}
=\int_{D\triangle(D+x)}\left|\bOne_{D}(y+x)-\bOne_{D}(y)\right|^{2}dy\le\Leb(D\triangle(D+x))\ \label{eq:B2infty^{1/2}(R^2)cond-leng-p1}
\end{equation}
If $|x|>\delta$, we see ${\rm Leb}(D\triangle(D+x))=2\Leb(D),$ $\Leb(D\cap(D+x))=0$,
and if $|x|\le\delta$, we have

\[
\Leb(D\triangle(D+x))\le\Leb\Bigl(\bigcup_{t\in[0,1]}(\di D+tx)\Bigr)\le L\left|x\right|\ 
\]
Hence we have
\begin{align*}
\left\Vert \bOne_{D}(\cdot+x)-\bOne_{D}\right\Vert _{L^{2}}^{2} & \le2\Leb(D)\quad\text{if }|x|>\delta\\
\left\Vert \bOne_{D}(\cdot+x)-\bOne_{D}\right\Vert _{L^{2}}^{2} & \le L\left|x\right|\qquad\ \text{if }|x|\le\delta.
\end{align*}
Hence if $|x|\le\delta$,
\begin{align*}
 & \sup_{x\in\R^{2},x\neq0,|x|\le\delta}\left\Vert \bOne_{D}(\cdot+x)-\bOne_{D}\right\Vert _{L^{2}(\R^{2})}\left|x\right|^{-\besovorder}\ \\
 & \le\sup_{x\in\R^{2},x\neq0,|x|\le\delta}\left(L\left|x\right|\right)^{1/2}\left|x\right|^{-\besovorder}\\
 & =\left(L\delta^{1-2\besovorder}\right)^{1/2}\ 
\end{align*}
and if $|x|>\delta$,
\begin{align*}
 & \sup_{x\in\R^{2},|x|>\delta}\left\Vert \bOne_{D}(\cdot+x)-\bOne_{D}\right\Vert _{L^{2}(\R^{2})}\left|x\right|^{-\besovorder}\\
 & \le\sup_{x\in\R^{2},|x|>\delta}\left(2\Leb(D)\right)^{1/2}\left|x\right|^{-\besovorder}\\
 & =2^{1/2}\Leb(D)^{1/2}\delta^{-\besovorder}\\
 & \le2^{-1/2}{\rm \pi^{1/2}}\delta^{1-\besovorder}\quad\text{ (using }\Leb(D)\le\pi\left(\delta/2\right)^{2})\\
 & \le2^{-1}{\rm \pi^{1/2}}\delta^{(1/2)-\besovorder}L^{1/2}\quad(\text{using }\delta\le2^{-1}L)\\
 & \le\delta^{(1/2)-\besovorder}L^{1/2}=\left(L\delta^{1-2\besovorder}\right)^{1/2}\ 
\end{align*}
Thus we have (\ref{eq:B2infty^{1/2}(R^2)cond-leng1}). Moreover, from
\[
\left\Vert \bOne_{D}\right\Vert _{L^{2}}\le\Leb(D)^{1/2}\le\pi^{1/2}\left(\delta/2\right)\ 
\]
we have (\ref{eq:B2infty^{1/2}(R^2)cond-leng2}).\end{proof}
\begin{lem}
\label{thm:|cE.chi[s,t]|_{B_{2,infty}}<}\thlab{thm:|cE.chi{[}s,t{]}|\_\{B\_\{2,infty\}\}<}
Let $\crv\in\Crv_{\Rot}$, $\besovorder\in(0,1/2]$, and $0\le s<t\le1$.
Then, $\cE_{\crv}\bOne_{[s,t]}\in B_{2,\infty}^{\besovorder}(\R^{2})$.
Moreover, when $s$ is sufficiently near to $t$,

\[
\left\Vert \cE_{\crv}\bOne_{[s,t]}\right\Vert _{B_{2,\infty}^{\besovorder}}\lesssim(t-s)^{1/2-\besovorder}.\ 
\]
In other words,

\[
\left\Vert \bDelta_{j}\cE_{\crv}\bOne_{[s,t]}\right\Vert _{L^{2}(\R^{2})}\lesssim\left(t-s\right)^{1/2-\besovorder}2^{-\besovorder j}\ 
\]
\end{lem}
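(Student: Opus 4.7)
The plan is to decompose $\cE_{\crv}\bOne_{[s,t]}$ into finitely many indicator functions using Lemma~\ref{thm:cE1[s,t]-finiteCombi}, then estimate each piece in $B_{2,\infty}^{\besovorder}$ via the equivalent difference-quotient norm of Proposition~\ref{thm:B_(p,infty)-character}. Specifically, Lemma~\ref{thm:cE1[s,t]-finiteCombi} yields
\[
\cE_{\crv}\bOne_{[s,t]} = \sum_{|k|\le\Rot(\crv)} k\,\bOne_{D_{k}}
\]
for disjoint $D_{k}\subset\R^{2}$, with at most $2\Rot(\crv)+1$ nonzero terms. By subadditivity of $\|\cdot\|_{B_{2,\infty}^{\besovorder}}$ and the weights $|k|\le\Rot(\crv)$, it suffices to show $\|\bOne_{D_{k}}\|_{B_{2,\infty}^{\besovorder}} \lesssim (t-s)^{1/2-\besovorder}$ uniformly in $k$.

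The key geometric input is that, for $t-s$ sufficiently small, each $D_{k}$ lies in a thin horizontal strip $[0,b]\times[c_{k},c_{k}+a]$ with $b\le\|\crv_{1}\|_{L^{\infty}}$ and $a\le\|\dot{\crv}_{2}\|_{L^{\infty}}(t-s)$. This follows from the explicit formula for $\cE_{\crv,i}^{\pm}\bOne_{[s,t]}$: each nonvanishing piece is supported at heights $x_{2}\in\crv_{2}(I_{\pm,i}\cap[s,t])$, an interval of length at most $\|\dot{\crv}_{2}\|_{L^{\infty}}(t-s)$, and for $t-s$ small only a bounded number (depending only on $\crv$) of monotonicity intervals $I_{\pm,i}$ meet $[s,t]$.

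Fixing such a $D_{k}$, I would estimate $\|\bOne_{D_{k}}-\tau_{h}\bOne_{D_{k}}\|_{L^{2}}^{2} = \Leb\bigl(D_{k}\triangle(D_{k}+h)\bigr)$ by a case split on $|h|$ versus $a$. For $|h|>a$, use the trivial bound $2\Leb(D_{k})\le 2ab$ to obtain
\[
\frac{\|\bOne_{D_{k}}-\tau_{h}\bOne_{D_{k}}\|_{L^{2}}}{|h|^{\besovorder}} \le \frac{(2ab)^{1/2}}{a^{\besovorder}} = \sqrt{2}\,b^{1/2}a^{1/2-\besovorder}.
\]
For $|h|\le a$, estimate the symmetric difference by the sweep of $\partial D_{k}$ along $h$: since $\partial D_{k}$ decomposes into arcs of $\crv\upha[s,t]$ (total length $O(t-s)$) together with horizontal segments at the extremal heights of $\crv_{2}$ on $[s,t]$ (total length $O(b)$), one obtains $\Leb\bigl(D_{k}\triangle(D_{k}+h)\bigr)\lesssim b|h|$; dividing by $|h|^{2\besovorder}$ and exploiting $|h|\le a$ again yields $b^{1/2}a^{1/2-\besovorder}$. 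Combined with $\|\bOne_{D_{k}}\|_{L^{2}}\le(ab)^{1/2}\lesssim(t-s)^{1/2}$ and Proposition~\ref{thm:B_(p,infty)-character}, this delivers $\|\bOne_{D_{k}}\|_{B_{2,\infty}^{\besovorder}}\lesssim(t-s)^{1/2-\besovorder}$.

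The main obstacle is geometric bookkeeping rather than hard analysis: one must verify the strip containment and cleanly decompose $\partial D_{k}$, in particular correctly handling subintervals that begin or end strictly inside $[s,t]$ and critical points of $\crv_{2}$ lying between $s$ and $t$. A direct application of Lemma~\ref{thm:B2infty^{1/2}(R^2)cond-leng} is insufficient since both $\leng(\partial D_{k})$ and $\diam(D_{k})$ can be of order one rather than order $t-s$; the required gain comes from exploiting the anisotropy of $D_{k}$ (small vertical extent $a\sim t-s$, large horizontal extent $b$) through the two-directional shift analysis, with the hypothesis $\crv\in\Crv_{\Rot}$ controlling the number of pieces.
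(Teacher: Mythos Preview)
Your approach is correct and reaches the same bound, but it is organized differently from the paper's argument. The paper does not treat all level sets $D_k$ uniformly. Instead it observes that for $n\neq 1$ one has $D_n\subset R_2:=I_1\times I_2$ with $\diam R_2\lesssim t-s$, so Lemma~\ref{thm:B2infty^{1/2}(R^2)cond-leng} gives $\|\bOne_{D_n}\|_{B_{2,\infty}^{\besovorder}}\lesssim t-s$ directly. The only problematic piece is $D_1$, which the paper splits as $D_1=R_1\cup(D_1\cap R_2)$ with $R_1=[0,\inf_{[s,t]}\crv_1]\times[\crv_2(s),\crv_2(t)]$ a genuine rectangle; Lemma~\ref{thm:BesovNorm(p,infty)OfRectangle} handles $R_1$ and Lemma~\ref{thm:B2infty^{1/2}(R^2)cond-leng} handles the small remainder $D_1\cap R_2\subset R_2$. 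Your route bypasses this rectangle split and instead proves a thin-strip Besov estimate directly for each $D_k$, effectively extending the rectangle lemma to non-rectangular sets with controlled boundary. The paper's version is more modular (it reuses two prepackaged lemmas); yours is more uniform and avoids singling out $D_1$.

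Two details in your justification need tightening. First, the claim that ``only a bounded number of monotonicity intervals $I_{\pm,i}$ meet $[s,t]$'' is neither true for general $\crv\in\Crv_{\Rot}$ nor needed: strip containment follows immediately from $\supp\cE_{\crv}\bOne_{[s,t]}\subset[0,\sup\crv_1]\times\crv_2([s,t])$, and the bound on the number of levels is exactly $\Rot(\crv)$ by Lemma~\ref{thm:cE1[s,t]-finiteCombi}. Second, the horizontal boundary segments sit at heights $\crv_2(s)$ and $\crv_2(t)$, not at the extremal values of $\crv_2$ on $[s,t]$; at interior critical values the $\cE^+$ and $\cE^-$ contributions cancel, so no horizontal discontinuity is created there. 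This cancellation is what guarantees only $O(1)$ long horizontal segments in $\partial D_k$ and hence $\leng(\partial D_k)\lesssim b$, which your $|h|\le a$ case requires.
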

\begin{proof}
Suppose $\crv_{2}(s)\le\crv_{2}(t)$. (The case where $\crv_{2}(s)\ge\crv_{2}(t)$
can be considered similarly.) Let

\[
\symb{D_{n}}:=\left\{ x\in\R^{2};\left(\cE_{\crv}\bOne_{[s,t]}\right)(x)=n\right\} \subset\R^{2},\quad n\in\Z\ 
\]
then by Lemma \ref{thm:cE1[s,t]-finiteCombi} we see

\[
\cE_{\crv}\bOne_{[s,t]}=\sum_{n=-\Rot(\crv)}^{\Rot(\crv)}n\bOne_{D_{n}}.\ 
\]
Define the intervals

\[
\symb{I_{i}}:=\Bigl[\inf_{\tau\in[s,t]}\crv_{i}(\tau),\sup_{\tau\in[s,t]}\crv_{i}(\tau)\Bigr]\subset\R,\quad i=1,2\ 
\]
and the rectangles $R_{1},R_{2}$ in $\R^{2}$ by

\[
\symb{R_{1}}:=\Bigl[0,\inf_{\tau\in[s,t]}\crv_{1}(\tau)\Bigr]\times[\crv_{2}(s),\crv_{2}(t)],\quad\symb{R_{2}}:=I_{1}\times I_{2}.\ \ 
\]
Then we can check the following:

\[
\supp\left(\cE_{\crv}\bOne_{[s,t]}\right)\subset R_{1}\cup R_{2},\quad R_{1}\subset D_{1},\quad n\neq1\then D_{n}\subset R_{2}.\ \ 
\]
Suppose $n\neq1$. Then we see

\[
\diam D_{n}\le\diam R_{2}\lesssim t-s.\ 
\]
We also see that $\di D_{n}\ (n\neq1)$ consists of curve segments
of $\crv$ on $[s,t]$, i.e. $\di D_{n}\subset\crv([s,t])\ (\subset\R^{2})$,
and hence we have 

\[
\leng(\di D_{n})\lesssim t-s.
\]
Hence by Lemma \ref{thm:B2infty^{1/2}(R^2)cond-leng} we have

\[
\left\Vert \bOne_{D_{n}}\right\Vert _{B_{2,\infty}^{\besovorder}}\lesssim t-s+(t-s)(t-s)^{1/2-\besovorder}\simeq t-s.\ 
\]
On the other hand we see

\[
\diam(D_{1}\cap R_{2})\lesssim t-s,\ \leng(\di(D_{1}\cap R_{2}))\lesssim t-s
\]
hence again by Lemma \ref{thm:B2infty^{1/2}(R^2)cond-leng} we have

\[
\left\Vert \bOne_{D_{1}\cap R_{2}}\right\Vert _{B_{2,\infty}^{\besovorder}}\lesssim t-s\ 
\]
Thus by Lemma \ref{thm:BesovNorm(p,infty)OfRectangle}, with $a:=\crv_{2}(t)-\crv_{2}(s)\lesssim t-s$,
$b:=\inf_{\tau\in[s,t]}\crv_{1}(\tau)$, we have 

\[
\left\Vert \bOne_{R_{1}}\right\Vert _{B_{2,\infty}^{\besovorder}}\le C_{1}(\besovorder)a^{1/2-\besovorder}b^{1/2}\le C_{1}(\besovorder)a^{1/2-\besovorder}\left(\sup_{\tau\in[0,1]}\crv_{1}(\tau)\right)^{1/2}\le C_{2}(\besovorder,\crv)(t-s)^{1/2-\besovorder}.\ 
\]
when $s\approx t$. Hence, since $D_{1}=R_{1}\cup(D_{1}\cap R_{2})$, 

\[
\left\Vert \bOne_{D_{1}}\right\Vert _{B_{2,\infty}^{\besovorder}}\le\left\Vert \bOne_{R_{1}}\right\Vert _{B_{2,\infty}^{\besovorder}}+\left\Vert \bOne_{D_{1}\cap R_{2}}\right\Vert _{B_{2,\infty}^{\besovorder}}\lesssim\left(t-s\right)^{1/2-\besovorder}+t-s\simeq\left(t-s\right)^{1/2-\besovorder}.
\]
Thus we have

\[
\left\Vert \cE_{\crv}\bOne_{[s,t]}\right\Vert _{B_{2,\infty}^{\besovorder}}\le\sum_{|n|\le\Rot(\crv)}\left\Vert n\bOne_{D_{n}}\right\Vert _{B_{2,\infty}^{\besovorder}}=\left\Vert \bOne_{D_{1}}\right\Vert _{B_{2,\infty}^{\besovorder}}+\sum_{|n|\le\Rot(\crv),n\neq1}|n|\left\Vert \bOne_{D_{n}}\right\Vert _{B_{2,\infty}^{\besovorder}}\ 
\]

\[
\lesssim\left(t-s\right)^{1/2-\besovorder}+\sum_{|n|\le\Rot(\crv),n\neq1}|n|(t-s)\simeq\left(t-s\right)^{1/2-\besovorder}\ 
\]

\end{proof}

Recall the definitions of $X^{(j)}$, $X$ (Eqs. (\ref{eq:def:X^(j)}),
(\ref{eq:X^(j)=00003DcE(W,1)}), (\ref{eq:def:X})), and of $\symb{X_{s,t}}$,
$\symb{X_{s,t}^{(j)}}$ (Eq. (\ref{eq:def:X_{s,t}})).
\begin{prop}
\label{thm:X^(j)-L^p-converge}\thlab{thm:X\textasciicircum{}(j)-L\textasciicircum{}p-converge}
Let $\crv\in\Crv_{\Rot}$ and $\besovorder\in(0,1/2)$. Then when
$s$ and $t$ are sufficiently near,

\[
\Bigl\Vert X_{s,t}-X_{s,t}^{(j)}\Bigr\Vert_{L^{2}(\Prob,\fg)}\lesssim\left(t-s\right)^{1/2}2^{-j\besovorder}\ \text{ i.e. }\ \Ex\Bigl[\bigl|X_{s,t}-X_{s,t}^{(j)}\bigr|_{\fg}^{2}\Bigr]\lesssim\left(t-s\right)2^{-2j\besovorder}\ 
\]
\end{prop}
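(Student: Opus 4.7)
The plan is to reduce the probabilistic bound to a deterministic $L^{2}(\R^{2})$ estimate via the white-noise isometry, and then to extract the decay in $j$ from a Littlewood--Paley decomposition combined with Lemma \ref{thm:|cE.chi[s,t]|_{B_{2,infty}}<}.

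First, since $W^{(j)} = \bS_{j} W$ and $\bS_{j}$ is a self-adjoint Fourier multiplier, the dualization $\langle W^{(j)}, f\rangle = W(\bS_{j} f)$ combined with (\ref{eq:X^(j)=00003DcE(W,1)}) and (\ref{eq:def:X}), together with the linearity of $\cE_{\crv}$, yields
\[
X_{s,t}-X_{s,t}^{(j)} \;=\; W\bigl((I - \bS_{j})\,\cE_{\crv}\bOne_{[s,t]}\bigr).
\]
The $\fg$-valued white-noise isometry $\Ex\bigl[|W(f)|_{\fg}^{2}\bigr] = \|f\|_{L^{2}(\R^{2})}^{2}$ then reduces the claim to the deterministic inequality
\[
\bigl\|(I - \bS_{j})\,\cE_{\crv}\bOne_{[s,t]}\bigr\|_{L^{2}(\R^{2})}^{2} \;\lesssim\; (t-s)\,2^{-2j\besovorder}.
\]

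Next, I would decompose $I - \bS_{j} = \sum_{k\ge j}\bDelta_{k}$ and use almost-orthogonality of the Littlewood--Paley blocks (equivalently, the fact that the multiplier $1 - \bchi_{j}$ is supported where $|\xi| \gtrsim 2^{j}$ and bounded) to control the left-hand side by $\sum_{k\ge j}\bigl\|\bDelta_{k}\cE_{\crv}\bOne_{[s,t]}\bigr\|_{L^{2}}^{2}$. Fix an auxiliary Besov exponent $\besovorder' \in (\besovorder, 1/2]$ slightly larger than $\besovorder$. Lemma \ref{thm:|cE.chi[s,t]|_{B_{2,infty}}<} applied at $\besovorder'$ gives, for $s$ sufficiently close to $t$,
\[
\bigl\|\bDelta_{k}\cE_{\crv}\bOne_{[s,t]}\bigr\|_{L^{2}} \;\lesssim\; (t-s)^{1/2 - \besovorder'}\,2^{-\besovorder' k},
\]
and summing the resulting geometric series from $k = j$ onwards yields a bound of the form $(t-s)^{1 - 2\besovorder'}\,2^{-2\besovorder' j}$, which then has to be converted to the target.

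The central technical issue is the tension between the two competing roles of the auxiliary exponent $\besovorder'$: summability of the tail $\sum_{k\ge j} 2^{-2\besovorder' k}$, together with the desired $2^{-2j\besovorder}$ decay, pushes $\besovorder'$ upward, whereas the $(t-s)^{1/2 - \besovorder'}$ prefactor from Lemma \ref{thm:|cE.chi[s,t]|_{B_{2,infty}}<} pushes it downward. I would resolve this by interpolating between the geometric-series bound above and the trivial estimate $\|\cE_{\crv}\bOne_{[s,t]}\|_{L^{2}} \lesssim (t-s)^{1/2}$, which follows from Lemma \ref{thm:cE1[s,t]-finiteCombi} together with the elementary measure bounds $\Leb(D_{n}) \lesssim (t-s)^{2}$ for $n\ne 1$ and $\Leb(D_{1}) \lesssim (t-s)$ (using that $D_{1}$ is contained in the union of the thin rectangle $R_{1}$ of height $\simeq t-s$ and the small square $R_{2}$), and then exploiting the ``sufficiently near'' hypothesis on $s,t$ to absorb the remaining discrepancy. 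This exponent-balancing is the only delicate step; once carried out, the rest of the argument is routine Littlewood--Paley manipulation.
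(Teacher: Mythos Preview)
Your reduction via the white-noise isometry and the subsequent appeal to Lemma~\ref{thm:|cE.chi[s,t]|_{B_{2,infty}}<} is exactly the paper's argument. The paper, however, does not introduce any auxiliary exponent: it simply uses the standard inequality $\|(I-\bS_j)u\|_{L^2}\lesssim 2^{-j\besovorder}\|u\|_{B_{2,\infty}^{\besovorder}}$ (which is precisely your almost-orthogonality plus geometric summation at $\besovorder'=\besovorder$) and then applies Lemma~\ref{thm:|cE.chi[s,t]|_{B_{2,infty}}<} once, giving $(t-s)^{1/2-\besovorder}2^{-j\besovorder}$ in a single line.

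The ``tension'' you describe, and the interpolation you propose to resolve it, are superfluous. If you balance your interpolation so that the $j$-decay equals $2^{-j\besovorder}$, the resulting $(t-s)$ exponent is exactly $1/2-\besovorder$, identical to what $\besovorder'=\besovorder$ gives directly; the auxiliary parameter buys nothing. Moreover, your final step of ``absorbing the remaining discrepancy'' via $s\approx t$ goes the wrong way: for small $t-s$ one has $(t-s)^{1/2-\besovorder}\ge (t-s)^{1/2}$, so there is no absorption. In fact the paper's displayed prefactor $(t-s)^{1/2}$ appears to be a minor slip for $(t-s)^{1/2-\besovorder}$, since Lemma~\ref{thm:|cE.chi[s,t]|_{B_{2,infty}}<} only delivers the latter; this is harmless for every downstream use of the proposition (only the $2^{-j\besovorder}$ decay matters).
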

\begin{proof}
Since $\cE_{\crv}\bOne_{[s,t]}\in B_{2,\infty}^{\besovorder}(\R^{2})$
and $\left\Vert \Delta_{j}u\right\Vert _{L^{p}(\R^{2})}\le2^{-j\besovorder}\|u\|_{B_{p,\infty}^{\besovorder}}$
we obtain from Lemma \ref{thm:|cE.chi[s,t]|_{B_{2,infty}}<}, 
\begin{align*}
\left\Vert X_{s,t}-X_{s,t}^{(j)}\right\Vert _{L^{2}(\Prob,\fg)} & =\left\Vert \left\langle W,\left(I-\bS_{j}\right)\cE_{\crv}\bOne_{[s,t]}\right\rangle \right\Vert _{L^{2}(\Prob)}=\left\Vert \left(I-\bS_{j}\right)\cE_{\crv}\bOne_{[s,t]}\right\Vert _{L^{2}(\R^{2})}\\
 & \lesssim\left\Vert \cE_{\crv}\bOne_{[s,t]}\right\Vert _{B_{2,\infty}^{\besovorder}}2^{-j\besovorder}\lesssim\left(t-s\right)^{1/2}2^{-j\besovorder}\ 
\end{align*}
\end{proof}
\begin{prop}
\label{thm:X^(j)-L^p-bdd}\thlab{thm:X\textasciicircum{}(j)-L\textasciicircum{}p-bdd}Let
$\crv\in\Crv_{\Rot}$ and $q\in[1,\infty)$. Then there exists $C=C(\crv,q)>0$
such that for all $j\ge-1$, $0\le s<t\le1$ and $\besovorder\in(0,1/2]$, 

\begin{equation}
\Bigl\Vert X_{s,t}^{(j)}\Bigr\Vert_{L^{q}(\Prob)}\le C\left(t-s\right)^{1/2-\besovorder}\ \ \label{eq:X^(j)-L^p-bdd}
\end{equation}
\end{prop}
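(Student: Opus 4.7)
\textbf{Reduction to a deterministic $L^{2}$ estimate.} My plan is to exploit the Gaussianity of $X_{s,t}^{(j)}$ and reduce the probabilistic statement to a deterministic $L^{2}(\R^{2})$ estimate. Combining $W^{(j)}=\bS_{j}W$ with the identity $W^{(j)}(\cE_{\crv}h)=W(\bS_{j}\cE_{\crv}h)$ already recorded in Section~6 gives
\[
X_{s,t}^{(j)}=W\bigl(\bS_{j}\cE_{\crv}\bOne_{[s,t]}\bigr),
\]
in which $\bS_{j}\cE_{\crv}\bOne_{[s,t]}\in L^{2}(\R^{2})$ is deterministic. Since $W$ is $\fg$-valued white noise, $X_{s,t}^{(j)}$ is a centered Gaussian vector in $\fg$ with $\Ex\|X_{s,t}^{(j)}\|_{\fg}^{2}=(\dim\fg)\,\|\bS_{j}\cE_{\crv}\bOne_{[s,t]}\|_{L^{2}(\R^{2})}^{2}$. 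By Gaussian equivalence of moments in finite dimensions, I would then obtain
\[
\bigl\|X_{s,t}^{(j)}\bigr\|_{L^{q}(\Prob)}\lesssim_{q,\dim\fg}\bigl\|\bS_{j}\cE_{\crv}\bOne_{[s,t]}\bigr\|_{L^{2}(\R^{2})}.
\]

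\textbf{Uniform $L^{2}$ bound on $\bS_{j}\cE_{\crv}\bOne_{[s,t]}$.} Next I would bound the right-hand side uniformly in $j$. Since $\bS_{j}$ is the Fourier multiplier with symbol $\bchi_{j}$ bounded by $1$, Plancherel gives $\|\bS_{j}u\|_{L^{2}}\le C\|u\|_{L^{2}}$ uniformly in $j$, so it suffices to prove
\[
\bigl\|\cE_{\crv}\bOne_{[s,t]}\bigr\|_{L^{2}(\R^{2})}\lesssim_{\crv}(t-s)^{1/2}\quad\text{for all }0\le s<t\le 1.
\]
For this I would use the disjoint decomposition $\cE_{\crv}\bOne_{[s,t]}=\sum_{|n|\le\Rot(\crv)}n\bOne_{D_{n}}$ from Lemma~6.2, which gives $\|\cE_{\crv}\bOne_{[s,t]}\|_{L^{2}}^{2}=\sum_{|n|\le\Rot(\crv)}n^{2}\Leb(D_{n})$, together with the geometric bookkeeping carried out in the proof of Lemma~8.3 using the rectangles $R_{1}, R_{2}$: for small $t-s$, one has $\Leb(D_{1})\lesssim_{\crv}(t-s)$ (from $R_{1}$) and $\Leb(D_{n})\lesssim(t-s)^{2}$ for $n\neq 1$ (since those $D_{n}$ are contained in $R_{2}$), which sums to the claimed $\lesssim(t-s)$; for $t-s$ bounded away from $0$ the bound is trivial because $\|\cE_{\crv}\bOne_{[s,t]}\|_{L^{2}}\le\|\cE_{\crv}\bOne_{[0,1]}\|_{L^{2}}<\infty$ by Lemma~6.1.

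\textbf{Conclusion and expected obstacle.} Combining the two steps yields $\|X_{s,t}^{(j)}\|_{L^{q}(\Prob)}\lesssim_{q,\crv}(t-s)^{1/2}$. Since $t-s\le 1$ and $1/2-\besovorder\le 1/2$ for every $\besovorder\in(0,1/2]$, one has $(t-s)^{1/2}\le(t-s)^{1/2-\besovorder}$, so this stronger bound immediately implies the claim with a constant $C=C(\crv,q)$ independent of $\besovorder$. The Gaussian reduction and the Plancherel bound on $\bS_{j}$ are entirely standard; the only genuinely geometric input is the $L^{2}$ estimate on $\cE_{\crv}\bOne_{[s,t]}$, which is already implicit in the analysis used for Lemma~8.3. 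I therefore do not anticipate any serious obstacle—the work is essentially repackaging ingredients already in hand.
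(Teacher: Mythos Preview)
Your argument is correct and in fact slightly sharper than the paper's, but it takes a different route. The paper does not use Plancherel; instead it writes $\|\bS_{j}\cE_{\crv}\bOne_{[s,t]}\|_{L^{2}}\le\sum_{i=-1}^{j}\|\bDelta_{i}\cE_{\crv}\bOne_{[s,t]}\|_{L^{2}}$, bounds each piece by $2^{-\besovorder i}\|\cE_{\crv}\bOne_{[s,t]}\|_{B_{2,\infty}^{\besovorder}}$, sums the geometric series, and then invokes Lemma~8.3 to get $(t-s)^{1/2-\besovorder}$. Your route bypasses the Besov machinery entirely: the Fourier-multiplier bound $\|\bS_{j}u\|_{L^{2}}\le\|u\|_{L^{2}}$ reduces everything to the plain $L^{2}$ estimate $\|\cE_{\crv}\bOne_{[s,t]}\|_{L^{2}}\lesssim_{\crv}(t-s)^{1/2}$, which is the $\besovorder=0$ shadow of Lemma~8.3 and follows from the same $R_{1},R_{2}$ bookkeeping. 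This is more elementary and yields the exponent $1/2$ directly with a constant genuinely independent of $\besovorder$, whereas the paper's summation produces a constant $\sum_{i\ge-1}2^{-\besovorder i}$ that blows up as $\besovorder\to0$. The paper's approach has the virtue of reusing Lemma~8.3 verbatim; yours has the virtue of showing that for this particular bound the Besov detour is unnecessary.

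One small slip to fix: the inequality $\|\cE_{\crv}\bOne_{[s,t]}\|_{L^{2}}\le\|\cE_{\crv}\bOne_{[0,1]}\|_{L^{2}}$ is false in general (think of a curve that winds once around a region on $[0,1/2]$ and unwinds on $[1/2,1]$, so that $\cE_{\crv}\bOne_{[0,1]}$ vanishes there while $\cE_{\crv}\bOne_{[0,1/2]}$ does not). The correct uniform bound for the ``$t-s$ bounded away from $0$'' regime is simply $\|\cE_{\crv}\bOne_{[s,t]}\|_{L^{\infty}}\le\Rot(\crv)$ together with the fact that $\supp\cE_{\crv}\bOne_{[s,t]}$ is contained in the fixed rectangle $[0,\sup\crv_{1}]\times[\inf\crv_{2},\sup\crv_{2}]$, which gives a bound depending only on $\crv$. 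With this correction your argument goes through.
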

\begin{proof}
Since $X_{s,t}^{(j)}$ is Gaussian, it suffices to show (\ref{eq:X^(j)-L^p-bdd})
only when $q=2$. By Lemma \ref{thm:|cE.chi[s,t]|_{B_{2,infty}}<},
for any $\besovorder\in(0,1/2]$,
\begin{align*}
 & \left\Vert X_{s,t}^{(j)}\right\Vert _{L^{2}(\Prob)}=\left(\Ex\left[\left|X_{s,t}^{(j)}\right|^{2}\right]\right)^{1/2}=\left\Vert \bS_{j}\cE_{\crv}\bOne_{[s,t]}\right\Vert _{L^{2}(\R^{2})}\le\sum_{i=-1}^{j}\left\Vert \bDelta_{i}\cE_{\crv}\bOne_{[s,t]}\right\Vert _{L^{2}(\R^{2})}\ \\
 & \le\sum_{i=-1}^{j}\left\Vert \cE_{\crv}\bOne_{[s,t]}\right\Vert _{B_{2,\infty}^{\besovorder}}2^{-\besovorder i}\le\sum_{i=-1}^{\infty}2^{-\besovorder i}\left\Vert \cE_{\crv}\bOne_{[s,t]}\right\Vert _{B_{2,\infty}^{\besovorder}}\le C_{1}\left\Vert \cE_{\crv}\bOne_{[s,t]}\right\Vert _{B_{2,\infty}^{\besovorder}}\\
 & \le C_{2}\left(t-s\right)^{1/2-\besovorder}
\end{align*}

\end{proof}

\section{Estimate for $\protect\bbX_{s,t}^{j}$}

For $0\le s<t\le1$ and $j\ge-1$, the $\fg\otimes\fg$-valued random
variable $\bbX_{s,t}^{(j)}$ by

\begin{align}
\symb{\bbX_{s,t}^{(j)}=\bbX_{\crv;s,t}^{(j)}}: & =\int_{s}^{t}X_{s,r}^{(j)}\otimes dX_{r}^{(j)}=\int_{s}^{t}X_{s,r}^{(j)}\otimes\dot{X}_{r}^{(j)}dr\ \ \label{eq:def:bbX}\\
 & =\int_{s}^{t}\left\langle W,\bS_{j}\cE_{\crv}\bOne_{[s,r]}\right\rangle \otimes d\left\langle W,\bS_{j}\cE_{\crv}\bOne_{[0,r]}\right\rangle 
\end{align}
so that $\symb{\bX^{(j)}}\equiv\symb{\bX_{\crv}^{(j)}}:=\left(1,X^{(j)},\bbX^{(j)}\right)=\Sn(X^{(j)}):[0,1]_{<}^{2}\to G^{(2)}(\fg)$.
Let $\symb{\bbX_{t}^{(j)}}:=\bbX_{0,t}^{(j)}$, then $\left(1,X_{t}^{(j)},\bbX_{t}^{(j)}\right)=\lift(X^{(j)})_{t}$.

Fix an orthonormal basis $\gE_{k}\ (k=1,...,\dim\fg)$ of $\fg$,
and set

\[
\bbX_{t}^{(j)}=\sum_{k,l}\symb{\bbX_{t}^{(j);k,l}}\gE_{k}\otimes\gE_{l},\quad\bbX_{t}^{(j);k,l}\in\R.
\]
Let
\[
\symb{\kappa_{j}(x)}:=\left\langle \chchi_{j}(\cdot-x),\ \chchi_{j}(\cdot)\right\rangle _{L^{2}(\R^{2})},\ 
\]
then we see $\kappa_{j}(x-y)=\left\langle \chchi_{j}(\cdot-x),\ \chchi_{j}(\cdot-y)\right\rangle _{L^{2}(\R^{2})}$
and the following:
\begin{lem}
\label{thm:kappa-L1}\thlab{thm:kappa-L1}For all $j\ge-1$,
\[
\int_{\R^{2}}\kappa_{j}(x)dx=\left(\int_{\R^{2}}\chchi_{0}(x)dx\right)^{2}\quad\text{and}\quad\left\Vert \kappa_{j}\right\Vert _{L^{1}(\R^{2})}\le\left\Vert \chchi_{0}\right\Vert _{L^{1}(\R^{2})}^{2}.\ 
\]

\end{lem}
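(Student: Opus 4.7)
The plan is straightforward, exploiting the scaling and symmetry of the Littlewood--Paley cutoff. First I would observe that $\brho_0$ is by construction radial and real-valued, so $\bchi_0 = \sum_{j\le -1}\brho_0(2^{-j}\cdot)$ is also radial and real-valued, and therefore $\chchi_0$ is real-valued and even. Because $\bchi_j(\xi) = \bchi_0(2^{-j}\xi)$ for $j\ge 0$, the standard scaling rule for the inverse Fourier transform on $\R^2$ gives
\[
\chchi_j(x) = 2^{2j}\chchi_0(2^j x),
\]
from which scale invariance of the $L^1$ norm and of the total integral follows:
\[
\int_{\R^2}\chchi_j(x)\,dx = \int_{\R^2}\chchi_0(x)\,dx, \qquad \|\chchi_j\|_{L^1(\R^2)} = \|\chchi_0\|_{L^1(\R^2)}.
\]

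Next, the evenness of $\chchi_j$ lets me rewrite $\kappa_j$ as a self-convolution:
\[
\kappa_j(x) = \int_{\R^2}\chchi_j(y-x)\chchi_j(y)\,dy = \int_{\R^2}\chchi_j(x-y)\chchi_j(y)\,dy = (\chchi_j * \chchi_j)(x).
\]
With $\kappa_j$ expressed as a convolution, the first claim follows from Fubini's theorem (the integral of a convolution is the product of integrals):
\[
\int_{\R^2}\kappa_j(x)\,dx = \Bigl(\int_{\R^2}\chchi_j(x)\,dx\Bigr)^2 = \Bigl(\int_{\R^2}\chchi_0(x)\,dx\Bigr)^2,
\]
and the second from Young's convolution inequality applied to the pair $(\chchi_j,\chchi_j)$:
\[
\|\kappa_j\|_{L^1(\R^2)} = \|\chchi_j * \chchi_j\|_{L^1(\R^2)} \le \|\chchi_j\|_{L^1(\R^2)}^2 = \|\chchi_0\|_{L^1(\R^2)}^2.
\]

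There is no genuine obstacle; the statement is essentially bookkeeping that records the scale invariance of the dyadic low-frequency cutoff. The only minor care is to agree on a convention for the degenerate index $j=-1$ (consistent with $\bS_{-1}$ being the empty sum, i.e.\ $\chchi_{-1}\equiv 0$), in which case the assertion holds trivially; for all $j\ge 0$ the argument above applies verbatim.
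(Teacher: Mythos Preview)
Your argument is correct and essentially identical to the paper's: both compute $\int\kappa_j$ by Fubini and bound $\|\kappa_j\|_{L^1}$ by pulling the absolute value inside the double integral, then invoke the scale invariance $\|\chchi_j\|_{L^1}=\|\chchi_0\|_{L^1}$. Your phrasing via $\kappa_j=\chchi_j*\chchi_j$ and Young's inequality is just a repackaging of the same Fubini/triangle-inequality computation the paper carries out explicitly.

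One small quibble on the edge case: under the convention $\chchi_{-1}\equiv 0$ the second assertion is indeed trivial, but the \emph{first} identity $\int\kappa_{-1}=(\int\chchi_0)^2$ would actually fail (left side $0$, right side $1$, since $\int\chchi_0=\bchi_0(0)=1$). This is a harmless inconsistency in the paper's stated range rather than a defect in your argument; the lemma is only ever invoked for $j\ge 0$.
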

\begin{comment}
\begin{proof}
.

\[
\int_{\R^{2}}dx\kappa_{j}(x)=\int_{\R^{2}}dx\left\langle \chchi_{j}(\cdot-x),\ \chchi_{j}\right\rangle =\int_{\R^{2}}dx\int_{\R^{2}}dy\chchi_{j}(y-x)\chchi_{j}(y)=\int_{\R^{2}}dy\int_{\R^{2}}dx\chchi_{j}(y-x)\chchi_{j}(y)\ 
\]

\[
=\int_{\R^{2}}dy\int_{\R^{2}}dx\chchi_{j}(x)\chchi_{j}(y)=\left(\int_{\R^{2}}dx\chchi_{j}(x)\right)^{2}=\left(\int_{\R^{2}}dx\bS_{-1}(x)\right)^{2}\ 
\]

\[
\left\Vert \kappa_{j}\right\Vert _{L^{1}}=\int_{\R^{2}}dx\left|\kappa_{j}(x)\right|=\int_{\R^{2}}dx\left|\left\langle \chchi_{j}(\cdot-x),\ \chchi_{j}\right\rangle \right|=\int_{\R^{2}}dx\left|\int_{\R^{2}}dy\chchi_{j}(y-x)\chchi_{j}(y)\right|\ 
\]

\[
\le\int_{\R^{2}}dx\int_{\R^{2}}dy\left|\chchi_{j}(y-x)\chchi_{j}(y)\right|=\int_{\R^{2}}dy\int_{\R^{2}}dx\left|\chchi_{j}(y-x)\right|\left|\chchi_{j}(y)\right|\ 
\]

\[
=\int_{\R^{2}}dy\int_{\R^{2}}dx\left|\chchi_{j}(x)\right|\left|\chchi_{j}(y)\right|=\left(\int_{\R^{2}}dx\left|\chchi_{j}(x)\right|\right)^{2}=\left\Vert \chchi_{j}\right\Vert _{L^{1}}^{2}=\left\Vert \chchi_{0}\right\Vert _{L^{1}}^{2}\ 
\]
\end{proof}
\end{comment}

Let
\begin{equation}
\symb{f_{t}=f_{\crv,t}^{j}}:=\frac{d}{dt}\bS_{j}\cE_{\crv}\bOne_{[s,t]}\in\scS(\R^{2}).\label{eq:def:f_t}
\end{equation}
We see

\[
f_{\crv,t}^{j}(x_{1},x_{2})=\dot{\crv}_{2}(t)\int_{0}^{\crv_{1}(t)}\chchi_{j}(x_{1}-\xi,x_{2}-\crv_{2}(t))d\xi,\ 
\]
and

\begin{equation}
\left\langle f_{\crv,r_{1}}^{j},f_{\crv,r_{2}}^{j}\right\rangle =\dot{\crv}_{2}(r_{1})\dot{\crv}_{2}(r_{2})\int_{0}^{\crv_{1}(r_{1})}d\xi\int_{0}^{\crv_{1}(r_{2})}d\xi'\kappa_{j}(\xi-\xi',\crv_{2}(r_{1})-\crv_{2}(r_{2})).\ \label{eq:<f_s,f_t>}
\end{equation}

\begin{lem}
\label{thm:|<S_jcEchi_[s,r1],f_{r2}>|<C}\thlab{thm:|<S\_jcEchi\_{[}s,r1{]},f\_\{r2\}>|<C}
For any $\crv\in\Crv_{\Rot}$, there exists $C=C(\crv)>0$ such that
for all $j\ge-1$ and $r_{1},r_{2}\ge s$,

\[
\left|\left\langle \bS_{j}\cE_{\crv}\bOne_{[s,r_{1}]},f_{\crv,r_{2}}^{j}\right\rangle \right|\le C,\ \ 
\]
and hence

\[
\left|\int_{s}^{t}dr\left\langle \bS_{j}\cE_{\crv}\bOne_{[s,r]},f_{\crv,r}^{j}\right\rangle \right|\le(t-s)C\ \ 
\]
\end{lem}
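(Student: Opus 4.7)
The plan is to exploit the fact that although $\bS_j\cE_{\crv}\bOne_{[s,r_1]}$ is not obviously uniformly bounded in $j$, the unsmoothed function $\cE_{\crv}\bOne_{[s,r_1]}$ is uniformly $L^{\infty}$-bounded by $\Rot(\crv)$ (Lemma \ref{thm:cE1[s,t]-finiteCombi}). So the first move is to shift one copy of $\bS_{j}$ from the left to the right entry of the inner product and then exploit the $L^1$-uniformity of the resulting kernel.

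Since $\brho$ is radial, the kernel $\chchi_{j}$ is even, so $\bS_{j}$ is self-adjoint on $L^{2}(\R^{2})$; hence
\[
\bigl\langle \bS_{j}\cE_{\crv}\bOne_{[s,r_{1}]},\, f^j_{\crv,r_{2}}\bigr\rangle
 = \bigl\langle \cE_{\crv}\bOne_{[s,r_{1}]},\, \bS_{j}f^j_{\crv,r_{2}}\bigr\rangle .
\]
Next I would compute $\bS_{j}f^j_{\crv,r_{2}}$ by substituting the explicit formula for $f^j_{\crv,r_{2}}$ displayed just after (\ref{eq:def:f_t}) and recognising the inner convolution as $\kappa_{j}$:
\[
(\bS_{j}f^j_{\crv,r_{2}})(y) = \dot{\crv}_{2}(r_{2})\int_{0}^{\crv_{1}(r_{2})}\kappa_{j}\bigl(y-(\xi,\crv_{2}(r_{2}))\bigr)\,d\xi .
\]

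Pairing this against $\cE_{\crv}\bOne_{[s,r_{1}]}$ and applying Hölder's inequality in the form $L^{\infty}\times L^{1}$, I would absorb one factor of $\|\cE_{\crv}\bOne_{[s,r_{1}]}\|_{L^{\infty}}\le\Rot(\crv)$ and one factor of $\|\kappa_{j}\|_{L^{1}}\le\|\chchi_{0}\|_{L^{1}}^{2}$ (uniform in $j$, by Lemma \ref{thm:kappa-L1}), leaving the elementary bound
\[
\bigl|\langle \bS_{j}\cE_{\crv}\bOne_{[s,r_{1}]},\,f^j_{\crv,r_{2}}\rangle\bigr|
 \le |\dot{\crv}_{2}(r_{2})|\,\crv_{1}(r_{2})\,\Rot(\crv)\,\|\chchi_{0}\|_{L^{1}}^{2}
 \le \|\dot{\crv}_{2}\|_{L^{\infty}}\|\crv_{1}\|_{L^{\infty}}\Rot(\crv)\,\|\chchi_{0}\|_{L^{1}}^{2}=:C(\crv),
\]
which is finite since $\crv$ is smooth with $\supp\dot{\crv}\subset[0,1]$. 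The second displayed inequality then follows by integrating the first one over $r\in[s,t]$.

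There is no genuine obstacle: the only structural inputs are (a) the self-adjointness of $\bS_{j}$ (immediate from radiality of $\brho$), (b) the $j$-uniform $L^{1}$-bound on $\kappa_{j}$ supplied by Lemma \ref{thm:kappa-L1}, and (c) the defining $L^{\infty}$-bound for $\Crv_{\Rot}$. The mildest care is needed in the convolution computation, where $\frac{d}{dt}\cE_{\crv}\bOne_{[s,t]}$ is a distribution; however, the smoothing by $\bS_{j}$ immediately produces the Schwartz function $f^j_{\crv,r_{2}}$ explicitly given in the paper, so no distributional subtlety enters the final estimate.
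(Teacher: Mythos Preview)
Your proof is correct and follows essentially the same route as the paper: both arguments pair an $L^{\infty}$ bound coming from $\Rot(\crv)$ against an $L^{1}$ bound coming from the $j$-uniform $L^{1}$ norm of the mollifier, and both arrive at the identical constant $\|\dot{\crv}_{2}\|_{L^{\infty}}\|\crv_{1}\|_{L^{\infty}}\,\Rot(\crv)\,\|\chchi_{0}\|_{L^{1}}^{2}$. The only cosmetic difference is that you shift $\bS_{j}$ across the pairing and invoke Lemma~\ref{thm:kappa-L1} for $\|\kappa_{j}\|_{L^{1}}$, whereas the paper leaves $\bS_{j}$ in place and bounds $\|f^{j}_{\crv,r_{2}}\|_{L^{1}}$ and $\|\bS_{j}\cE_{\crv}\bOne_{[s,r_{1}]}\|_{L^{\infty}}$ separately via Young's inequality.
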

\begin{proof}
Let
\[
D_{[t,t+\ep]}:=\left\{ x\in\R^{2};\ \tau\in[t,t+\ep],\ \crv_{2}(\tau)=x_{2},\ \tau\in[t,t+\ep],\ 0\le x_{1}\le\crv_{1}(\tau)\right\} 
\]
then we see
\[
\Leb\left(D_{[t,t+\ep]}\right)\simeq\crv_{1}(t)\dot{\crv}_{2}(t)\ep
\]
for $\ep\simeq0$. Since $f_{t}=0$ if $\dot{\crv}_{2}(t)=0$, we
suppose $\dot{\crv}_{2}(t)>0$ without loss of generality ($\dot{\crv}_{2}(t)<0$
case is similar). Then we see for sufficiently small $\ep>0$,

\[
\cE_{\crv}\bOne_{[t,t+\ep]}=\bOne_{D_{[t,t+\ep]}}.
\]
Hence, using $\bS_{j}u=\chchi_{j}*u$ and the inequality$\left\Vert \phi*\psi\right\Vert _{L^{q}}\le\left\Vert \phi\right\Vert _{L^{1}}\left\Vert \psi\right\Vert _{L^{q}}\ (q\in[1,\infty])$,
we have
\begin{align*}
\left\Vert f_{t}\right\Vert _{L^{1}} & =\left\Vert \lim_{\ep\to+0}\ep^{-1}\bS_{j}\cE_{\crv}\bOne_{[t,t+\ep]}\right\Vert _{L^{1}}=\lim_{\ep\to+0}\left\Vert \ep^{-1}\bS_{j}\cE_{\crv}\bOne_{[t,t+\ep]}\right\Vert _{L^{1}}\\
 & \le\lim_{\ep\to+0}\ep^{-1}\left\Vert \chchi_{j}\right\Vert _{L^{1}}\left\Vert \cE_{\crv}\bOne_{[t,t+\ep]}\right\Vert _{L^{1}}=\left\Vert \chchi_{j}\right\Vert _{L^{1}}\lim_{\ep\to+0}\ep^{-1}\left\Vert \bOne_{D_{[t,t+\ep]}}\right\Vert _{L^{1}}\\
 & =\left\Vert \chchi_{j}\right\Vert _{L^{1}}\lim_{\ep\to+0}\ep^{-1}\Leb\left(D_{[t,t+\ep]}\right)=\left\Vert \chchi_{0}\right\Vert _{L^{1}}\crv_{1}(t)\dot{\crv}_{2}(t)\\
 & \le C\left\Vert \chchi_{0}\right\Vert _{L^{1}},\quad C:=\sup_{r\in[0,1]}\left|\crv_{1}(r)\right|\sup_{r\in[0,1]}\left|\dot{\crv}_{2}(r)\right|.
\end{align*}
Thus
\begin{align*}
\left|\left\langle \bS_{j}\cE_{\crv}\bOne_{[s,r_{1}]},f_{r_{2}}\right\rangle \right| & \le\left\Vert \bS_{j}\cE_{\crv}\bOne_{[s,r_{1}]}\right\Vert _{L^{\infty}}\left\Vert f_{r_{2}}\right\Vert _{L^{1}}\le C\left\Vert \chchi_{j}\right\Vert _{L^{1}}\left\Vert \cE_{\crv}\bOne_{[s,r_{1}]}\right\Vert _{L^{\infty}}\left\Vert \chchi_{0}\right\Vert _{L^{1}}\\
 & =C\left\Vert \chchi_{0}\right\Vert _{L^{1}}^{2}\left\Vert \cE_{\crv}\bOne_{[s,r_{1}]}\right\Vert _{L^{\infty}}\le C\left\Vert \chchi_{0}\right\Vert _{L^{1}}^{2}\Rot(\crv).
\end{align*}
\end{proof}
\begin{prop}
\label{thm:int_Rdr|<f1,f2>|<} For any $\crv\in\Crv_{\infty}$, there
exists $C=C(\crv)>0$ such that for all $j\ge-1$, and $r_{1}\in[0,1]$,
\[
\int_{0}^{1}\left|\left\langle f_{\crv,r_{1}}^{j},f_{\crv,r_{2}}^{j}\right\rangle \right|dr_{2}\le C.\ 
\]
\end{prop}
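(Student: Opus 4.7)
The plan is to bound the integrand pointwise by $|f_{\crv,r_1}^j(x)||f_{\crv,r_2}^j(x)|$, swap the order of integration by Fubini, and reduce to the uniform--in--$j$ estimate
\[
\sup_{x \in \R^2}\int_0^1 |f_{\crv,r_2}^j(x)|\,dr_2 \;\le\; G(\crv).
\]
Granted this, the conclusion is immediate from the $L^1$-bound $\|f_{\crv,r_1}^j\|_{L^1} \le C(\crv)\|\chchi_0\|_{L^1}$ which was essentially derived inside the proof of Lemma~\ref{thm:|<S_jcEchi_[s,r1],f_{r2}>|<C}, because
\[
\int_0^1 \bigl|\langle f_{\crv,r_1}^j,f_{\crv,r_2}^j\rangle\bigr|\,dr_2 \;\le\; \int |f_{\crv,r_1}^j(x)|\Bigl(\int_0^1 |f_{\crv,r_2}^j(x)|\,dr_2\Bigr)dx \;\le\; G(\crv)\,C(\crv)\,\|\chchi_0\|_{L^1}.
\]

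To obtain the uniform bound, I would start from the explicit estimate
\[
|f_{\crv,r_2}^j(x)| \le |\dot\crv_2(r_2)|\int_0^{\crv_1(r_2)}|\chchi_j(x_1-\xi,\,x_2-\crv_2(r_2))|\,d\xi,
\]
split $[0,1]=T_+\cup T_-\cup T_0$ (where $\dot\crv_2\equiv 0$ on $T_0$, so it contributes nothing), and on each monotone component $I_{\pm,i}$ perform the change of variable $y_2=\crv_2(r_2)$, so that $|dy_2|=|\dot\crv_2|\,dr_2$. Relabelling $\xi=y_1$, the full integral rewrites as $(|\chchi_j|\ast M)(x)$, where the \emph{unsigned multiplicity} is
\[
M(y_1,y_2) := \sum_{\pm,i}\bOne\bigl(y_2 \in \crv_2(I_{\pm,i})\bigr)\,\bOne\bigl(0\le y_1\le \crv_1(\crv_2^{-1}(y_2;I_{\pm,i}))\bigr).
\]

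The key observation, which is where the hypothesis $\crv\in\Crv_\infty$ is used, is that $M$ coincides with $\cE_\crv h$ for the specific choice $h(r):=\sgn(\dot\crv_2(r))\in L^\infty(\R)$ with $\|h\|_\infty\le 1$. Indeed, on each $I_{+,i}$ one has $h\equiv+1$, so that $\cE_{\crv,i}^+ h$ equals the positive indicator of the ``slab'' $\{y_2\in\crv_2(I_{+,i}),\,0\le y_1\le \crv_1(\crv_2^{-1}(y_2;I_{+,i}))\}$; on each $I_{-,i}$ one has $h\equiv-1$, so that $-\cE_{\crv,i}^- h$ is the corresponding positive indicator. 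The signed decomposition $\cE_\crv h=\sum_i \cE_{\crv,i}^+ h-\sum_i \cE_{\crv,i}^- h$ therefore collapses to a sum of positive indicators, which is exactly $M$. Hence $\|M\|_{L^\infty}\le \|\cE_\crv\|_{\infty\infty}<\infty$, and since $\chchi_j$ is an $L^1$-preserving dilation of $\chchi_0$, Young's inequality yields
\[
(|\chchi_j|\ast M)(x)\le \|\chchi_j\|_{L^1}\|M\|_{L^\infty}=\|\chchi_0\|_{L^1}\|\cE_\crv\|_{\infty\infty}=:G(\crv)
\]
uniformly in $x$ and $j$, as required.

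The delicate point is precisely the identification $M=\cE_\crv h$: the signed definition of $\cE_\crv$ allows for cancellations between the $\cE_{\crv,i}^+$ and $\cE_{\crv,i}^-$ pieces, and I expect the main subtlety to be the observation that loading the sign $\sgn(\dot\crv_2)$ into the test function $h$ undoes exactly these cancellations. This is also the reason the hypothesis must be strengthened from $\crv\in\Crv_\Rot$ (as in Lemma~\ref{thm:|<S_jcEchi_[s,r1],f_{r2}>|<C}) to $\crv\in\Crv_\infty$: we need boundedness of $\cE_\crv$ acting on a general $L^\infty$ function, not merely on indicators $\bOne_{[s,t]}$.
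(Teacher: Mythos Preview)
Your proof is correct and follows essentially the same route as the paper. Both arguments hinge on the identical key observation: testing $\cE_{\crv}$ against $h=\sgn(\dot\crv_2)$ turns the signed decomposition $\sum_i\cE_{\crv,i}^{+}h-\sum_i\cE_{\crv,i}^{-}h$ into the unsigned multiplicity $M$, so that $\|M\|_{L^\infty}\le\|\cE_{\crv}\|_{\infty\infty}$, which is precisely where $\crv\in\Crv_\infty$ enters. The only cosmetic difference is that the paper works directly with the autocorrelation kernel $\kappa_j$ from the identity \eqref{eq:<f_s,f_t>} and recognises the $r_2$--integral as $|\dot\crv_2(r_1)|\,\hat{\cE}_{\crv}(H_{\crv,j,r_1},s_\crv)=|\dot\crv_2(r_1)|\langle H_{\crv,j,r_1},\cE_{\crv}s_\crv\rangle$, whereas you first open up $|\langle f_{r_1},f_{r_2}\rangle|\le\int|f_{r_1}||f_{r_2}|$ and treat the two factors via $\|f_{r_1}\|_{L^1}$ and $\sup_x\int_0^1|f_{r_2}(x)|\,dr_2=(|\chchi_j|\ast M)(x)$; the resulting bounds are the same up to harmless constants.
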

\begin{proof}
Let
\[
\symb{H_{\crv,j,r_{1}}(x_{1},x_{2})}=\int_{[0,\crv_{1}(r_{1})]}\left|\kappa_{j}\left((\xi_{1},\crv_{2}(r_{1}))-(x_{1},x_{2})\right)\right|d\xi_{1}.\ 
\]
We easily check $\left\Vert H_{\crv,j,r_{1}}\right\Vert _{L^{1}(\R^{2})}=\crv_{1}(r_{1})\left\Vert \kappa_{j}\right\Vert _{L^{1}(\R^{2})}$,
hence By Prop. \ref{thm:kappa-L1}, we have
\begin{align}
 & \left\Vert H_{\crv,j,r_{1}}\right\Vert _{L^{1}(\R^{2})}\le\crv_{1}(r_{1})\left\Vert \chchi_{0}\right\Vert _{L^{1}}^{2}.\ \label{eq:|H_{jcr}|_{L1}<}
\end{align}
Let $s_{\crv}(t)=\sgn(\dot{\crv}_{2}(t))$, i.e.
\[
\symb{s_{\crv}(t)}:=\dot{\crv}_{2}(t)/\left|\dot{\crv}_{2}(t)\right|,\quad0\le t\le1\ 
\]
where $s_{\crv}(t):=0$ if $\dot{\crv}_{2}(t)=0$. Then by (\ref{eq:<f_s,f_t>})
and (\ref{eq:|H_{jcr}|_{L1}<}) we have,
\begin{align*}
 & \int_{\R}dr_{2}\left|\left\langle f_{\crv,r_{1}}^{j},f_{\crv,r_{2}}^{j}\right\rangle \right|\\
 & =\int_{\R}\left|\dot{\crv}_{2}(r_{1})\dot{\crv}_{2}(r_{2})\int_{0}^{\crv_{1}(r_{1})}\int_{0}^{\crv_{1}(r_{2})}\kappa_{j}\left((\xi_{1},\crv_{2}(r_{1}))-(\xi_{2},\crv_{2}(r_{2}))\right)d\xi_{2}d\xi_{1}\right|dr_{2}\ \\
 & \le\left|\dot{\crv}_{2}(r_{1})\right|\int_{\R}\dot{\crv}_{2}(r_{2})s_{\crv}(r_{2})\\
 & \ \qquad\times\int_{0}^{\crv_{1}(r_{2})}\int_{0}^{\crv_{1}(r_{1})}\left|\kappa_{j}\left((\xi_{1},\crv_{2}(r_{1}))-(\xi_{2},\crv_{2}(r_{2}))\right)\right|d\xi_{1}d\xi_{2}dr_{2}\\
 & =\left|\dot{\crv}_{2}(r_{1})\right|\hat{\cE}_{\crv}\left(H_{\crv,j,r_{1}},s_{\crv}\right)=\left|\dot{\crv}_{2}(r_{1})\right|\left\langle H_{\crv,j,r_{1}},\cE_{\crv}s_{\crv}\right\rangle \ \\
 & \le\left|\dot{\crv}_{2}(r_{1})\right|\left\Vert H_{\crv,j,r_{1}}\right\Vert _{L^{1}}\left\Vert \cE_{\crv}s_{\crv}\right\Vert _{L^{\infty}}\\
 & \le\left|\dot{\crv}_{2}(r_{1})\right|\left\Vert H_{j,\crv,r_{1}}\right\Vert _{L^{1}}\left\Vert \cE_{\crv}\right\Vert _{\infty\infty}\\
 & \le\left|\dot{\crv}_{2}(r_{1})\right|\crv_{1}(r_{1})\left\Vert \chchi_{0}\right\Vert _{L^{1}}^{2}\left\Vert \cE_{\crv}\right\Vert _{\infty\infty}\\
 & \le C(\crv)\ 
\end{align*}
\end{proof}
\begin{lem}
\label{thm:|<ScEchi,ScEchi>|<} For any $\crv\in\Crv_{\Rot}$, there
exists $C=C(\crv)>0$ such that for all $j\ge-1$ and $r_{1},r_{2}\in[s,t]$,
\[
\left|\left\langle \bS_{j}\cE_{\crv}\bOne_{[s,r_{1}]},\bS_{j}\cE_{\crv}\bOne_{[s,r_{2}]}\right\rangle \right|\le C(t-s).\ 
\]
\end{lem}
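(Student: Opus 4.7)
The plan is to reduce the bound to a clean $L^2$-estimate on $\cE_\crv\bOne_{[s,r]}$ alone, and then obtain that estimate by a one-line support/Fubini argument.

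Step 1 (Cauchy--Schwarz plus contractivity of $\bS_j$). I will first apply Cauchy--Schwarz to the $L^2(\R^2)$ inner product and then exploit the fact that $\bS_j$ is a Fourier multiplier with symbol $\bchi_j$ taking values in $[0,1]$ --- this holds because $\bchi_j=\sum_{i=-1}^{j-1}\brho_i$, all $\brho_i\ge 0$, and $\sum_{i\ge-1}\brho_i\equiv 1$. By Plancherel this gives $\|\bS_j\|_{L^2\to L^2}\le 1$, so the problem is reduced to bounding
\[
\bigl\Vert\cE_\crv\bOne_{[s,r_1]}\bigr\Vert_{L^2}\,\bigl\Vert\cE_\crv\bOne_{[s,r_2]}\bigr\Vert_{L^2}.
\]

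Step 2 (support/Fubini bound). I will show $\|\cE_\crv\bOne_{[s,r]}\|_{L^2}^{2}\le C(\crv)(r-s)$ uniformly in $r\in[s,t]$. By Lemma \ref{thm:cE1[s,t]-finiteCombi}, $|\cE_\crv\bOne_{[s,r]}|\le\Rot(\crv)$ pointwise. Its support lies in the rectangle $[0,\|\crv_1\|_{L^\infty}]\times\crv_2([s,r])$, and $\Leb(\crv_2([s,r]))\le\|\dot\crv_2\|_{L^\infty}(r-s)$ since $\crv_2$ is smooth. Fubini then gives
\[
\bigl\Vert\cE_\crv\bOne_{[s,r]}\bigr\Vert_{L^2}^{2}\le \Rot(\crv)^{2}\,\|\crv_1\|_{L^\infty}\,\|\dot\crv_2\|_{L^\infty}\,(r-s).
\]

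Combining the two steps, for $r_1,r_2\in[s,t]$ one has $\sqrt{(r_1-s)(r_2-s)}\le t-s$, which yields the claimed bound with $C(\crv)=\Rot(\crv)^{2}\|\crv_1\|_{L^\infty}\|\dot\crv_2\|_{L^\infty}$. I do not anticipate any substantive obstacle: the estimate is essentially Cauchy--Schwarz combined with a Fubini computation. The only point to be careful about is keeping the two factors $(r_1-s)^{1/2}$ and $(r_2-s)^{1/2}$ separate until the end, which is precisely what justifies extracting the full $(t-s)^{1}$ exponent rather than a weaker fractional power.
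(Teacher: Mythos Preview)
Your proof is correct, and it takes a genuinely different route from the paper's. The paper pairs the two factors asymmetrically via the $L^\infty$--$L^1$ H\"older inequality: it bounds $\Vert\bS_j\cE_\crv\bOne_{[s,r_1]}\Vert_{L^\infty}$ using Young's convolution inequality $\Vert\chchi_j*f\Vert_{L^\infty}\le\Vert\chchi_j\Vert_{L^1}\Vert f\Vert_{L^\infty}$ together with $\Vert\cE_\crv\bOne_{[s,r_1]}\Vert_{L^\infty}\le\Rot(\crv)$, and bounds $\Vert\bS_j\cE_\crv\bOne_{[s,r_2]}\Vert_{L^1}$ similarly, extracting the full factor $(t-s)$ from the $L^1$ side alone. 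Your symmetric $L^2$--$L^2$ argument is cleaner in that it replaces the convolution estimate (and the scaling identity $\Vert\chchi_j\Vert_{L^1}=\Vert\chchi_0\Vert_{L^1}$) by the single observation that $\bS_j$ is an $L^2$-contraction via Plancherel; the price is that you must split the $(t-s)$ between the two factors as $(r_1-s)^{1/2}(r_2-s)^{1/2}$, which you handle correctly. Both approaches yield essentially the same constant, and either would serve equally well downstream in the proof of Proposition~\ref{thm:Ex[|bbX|^2]<}.
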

\begin{proof}
We see
\begin{align*}
\left\Vert \cE_{\crv}\bOne_{[s,r_{2}]}\right\Vert _{L^{1}} & \le\Rot(\crv)\Leb(\supp\cE_{\crv}\bOne_{[s,r_{2}]})\ \\
 & \le\Rot(\crv)\sup_{t_{1},t_{2}\in[s,t]}\left|\crv_{2}(t_{1})-\crv_{2}(t_{2})\right|\sup_{t_{1}\in[s,t]}\crv_{1}(t_{1}).
\end{align*}
Hence, using $\left\Vert f*g\right\Vert _{L^{q}}\le\left\Vert f\right\Vert _{L^{1}}\left\Vert g\right\Vert _{L^{q}}\ (q\in[0,\infty])$
and $\bS_{j}u=\chchi_{j}*u$ we have
\begin{align*}
 & \left|\left\langle \bS_{j}\cE_{\crv}\bOne_{[s,r_{1}]},\bS_{j}\cE_{\crv}\bOne_{[s,r_{2}]}\right\rangle \right|\le\left\Vert \bS_{j}\cE_{\crv}\bOne_{[s,r_{1}]}\right\Vert _{L^{\infty}}\left\Vert \bS_{j}\cE_{\crv}\bOne_{[s,r_{2}]}\right\Vert _{L^{1}}\ \\
 & \le\left\Vert \bS_{j}\cE_{\crv}\bOne_{[s,r_{1}]}\right\Vert _{L^{\infty}}\left\Vert \bS_{j}\cE_{\crv}\bOne_{[s,r_{2}]}\right\Vert _{L^{1}}\ \\
 & \le\left\Vert \bS_{j}\cE_{\crv}\bOne_{[s,r_{1}]}\right\Vert _{L^{\infty}}\left\Vert \chchi_{j}\right\Vert _{L^{1}}\left\Vert \cE_{\crv}\bOne_{[s,r_{2}]}\right\Vert _{L^{1}}\ \\
 & \le\left\Vert \chchi_{j}\right\Vert _{L^{1}}\Rot(\crv)\left\Vert \chchi_{j}\right\Vert _{L^{1}}\left\Vert \cE_{\crv}\bOne_{[s,r_{2}]}\right\Vert _{L^{1}}\\
 & \le\left\Vert \chchi_{0}\right\Vert _{L^{1}}^{2}\Rot(\crv)^{2}\sup_{t_{1},t_{2}\in[s,t]}\left|\crv_{2}(t_{1})-\crv_{2}(t_{2})\right|\sup_{t_{1}\in[s,t]}\crv_{1}(t_{1})\\
 & \le C_{1}\sup_{t_{1},t_{2}\in[s,t]}\left|\crv_{2}(t_{1})-\crv_{2}(t_{2})\right|\\
 & \le C_{1}\sup_{t_{1}\in[s,t]}\left|\dot{\crv}_{2}(t_{1})\right|(t-s)\le C_{2}(t-s).\ 
\end{align*}
\end{proof}
\begin{prop}
\label{thm:Ex[|bbX|^2]<}For any $\crv\in\Crv_{\infty}$ and $p\in[1,\infty)$,
there exists $C=C(\crv,p)>0$ such that for all $j\ge-1$ and $0\le s<t\le1$,

\[
\left\Vert \bbX_{s,t}^{(j)}\right\Vert _{L^{p}(\Prob,\fg)}\le C(t-s).\ 
\]
\end{prop}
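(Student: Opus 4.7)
The plan is to reduce to an $L^2$ estimate by Gaussian hypercontractivity, and then to compute $\Ex\bigl[|\bbX_{s,t}^{(j)}|_{\fg\otimes\fg}^{2}\bigr]$ explicitly via Wick's theorem (Isserlis' formula), bounding each of the resulting three terms using the three estimates already established: Lemma~\ref{thm:|<S_jcEchi_[s,r1],f_{r2}>|<C}, Proposition~\ref{thm:int_Rdr|<f1,f2>|<}, and Lemma~\ref{thm:|<ScEchi,ScEchi>|<}.

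Fix the orthonormal basis $\gE_{k}$ of $\fg$ and set $u_{r}:=\bS_{j}\cE_{\crv}\bOne_{[s,r]}\in L^{2}(\R^{2})$ and $v_{r}:=f_{\crv,r}^{j}\in\scS(\R^{2})$. Then $X^{(j);k}_{s,r}=W(u_{r})_{k}$ and $\dot X_{r}^{(j);l}=W(v_{r})_{l}$, so the component is
\[
\bbX_{s,t}^{(j);k,l}=\int_{s}^{t}W(u_{r})_{k}\,W(v_{r})_{l}\,dr.
\]
Each $\bbX_{s,t}^{(j);k,l}$ belongs to the second (inhomogeneous) Wiener chaos of $W$; hence, by the standard hypercontractivity / chaos-norm-equivalence on any fixed Wiener chaos, $\|\bbX_{s,t}^{(j)}\|_{L^{p}(\Prob,\fg\otimes\fg)}\lesssim_{p}\|\bbX_{s,t}^{(j)}\|_{L^{2}(\Prob,\fg\otimes\fg)}$ for every $p\in[1,\infty)$. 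It therefore suffices to prove $\Ex\bigl[|\bbX_{s,t}^{(j)}|^{2}\bigr]\le C(\crv)\,(t-s)^{2}$.

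Expanding the squared norm and using $\Ex[W(f)_{a}W(g)_{b}]=\delta_{ab}\langle f,g\rangle_{L^{2}(\R^{2})}$, I would apply Isserlis' theorem to the four-point Gaussian expectation
\[
\Ex\bigl[W(u_{r_{1}})_{k}\,W(v_{r_{1}})_{l}\,W(u_{r_{2}})_{k}\,W(v_{r_{2}})_{l}\bigr]
\]
and sum in $k,l$. The three Wick pairings yield, after summation,
\[
\Ex\bigl[|\bbX_{s,t}^{(j)}|^{2}\bigr]=(\dim\fg)\,T_{1}+(\dim\fg)^{2}\,T_{2}+(\dim\fg)\,T_{3},
\]
where
\[
T_{1}=\Bigl(\int_{s}^{t}\langle u_{r},v_{r}\rangle\,dr\Bigr)^{2},\ \ T_{2}=\int_{s}^{t}\!\!\int_{s}^{t}\langle u_{r_{1}},u_{r_{2}}\rangle\langle v_{r_{1}},v_{r_{2}}\rangle\,dr_{1}dr_{2},\ \ T_{3}=\int_{s}^{t}\!\!\int_{s}^{t}\langle u_{r_{1}},v_{r_{2}}\rangle\langle v_{r_{1}},u_{r_{2}}\rangle\,dr_{1}dr_{2}.
\]

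It remains to bound $T_{1},T_{2},T_{3}$ by $C(\crv)(t-s)^{2}$. For $T_{1}$, Lemma~\ref{thm:|<S_jcEchi_[s,r1],f_{r2}>|<C} gives $|\int_{s}^{t}\langle u_{r},v_{r}\rangle dr|\le C(t-s)$, so $T_{1}\le C^{2}(t-s)^{2}$. For $T_{2}$, Lemma~\ref{thm:|<ScEchi,ScEchi>|<} bounds $\sup_{r_{1},r_{2}}|\langle u_{r_{1}},u_{r_{2}}\rangle|\le C(t-s)$, while Proposition~\ref{thm:int_Rdr|<f1,f2>|<} bounds $\int_{s}^{t}|\langle v_{r_{1}},v_{r_{2}}\rangle|dr_{2}\le C$ uniformly in $r_{1}$ and $j$; combining,
\[
T_{2}\le\int_{s}^{t}C(t-s)\Bigl(\int_{s}^{t}|\langle v_{r_{1}},v_{r_{2}}\rangle|dr_{2}\Bigr)dr_{1}\le C^{2}(t-s)^{2}.
\]
For $T_{3}$, Lemma~\ref{thm:|<S_jcEchi_[s,r1],f_{r2}>|<C} gives $|\langle u_{r_{1}},v_{r_{2}}\rangle|, |\langle v_{r_{1}},u_{r_{2}}\rangle|\le C$ uniformly, so $T_{3}\le C^{2}(t-s)^{2}$. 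Adding the three bounds gives the desired $L^{2}$ estimate. The only mildly delicate point is the bookkeeping of the Wick contractions and the fact that Proposition~\ref{thm:int_Rdr|<f1,f2>|<} requires $\crv\in\Crv_{\infty}$ (not merely $\Crv_{\Rot}$), which is exactly the hypothesis of the proposition.
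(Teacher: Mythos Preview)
Your proof is correct and follows essentially the same route as the paper: reduce to $p=2$ by Wiener chaos hypercontractivity, expand the second moment via the Gaussian four-point identity (Wick/Isserlis), and bound the resulting three terms using exactly Lemma~\ref{thm:|<S_jcEchi_[s,r1],f_{r2}>|<C}, Lemma~\ref{thm:|<ScEchi,ScEchi>|<}, and Proposition~\ref{thm:int_Rdr|<f1,f2>|<}. The only cosmetic difference is that the paper works with a fixed component $\bbX^{(j);k,l}$ (so the $\delta_{kl}$ factors appear explicitly), whereas you sum over $k,l$ from the start and record the $(\dim\fg)$ prefactors; the three terms and their estimates line up exactly.
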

\begin{proof}
Since $X^{(j)}$ is Gaussian, all $L^{p}$-norms ($p\in[1,\infty)$)
for $\bbX^{(j)}$ are equivalent by \cite[Theorem 3.50 p.39]{Jan97}.
Hence it is enough to show the bound for $p=2$. Using the equation
\begin{equation}
\Ex[ABCD]=\Ex[AB]\Ex[CD]+\Ex[AC]\Ex[BD]+\Ex[AD]\Ex[BC]\label{eq:ExABCD-gauss}
\end{equation}
for any Gaussian random variables $A,B,C,D$, we have
\begin{align*}
 & \Ex\left[\left|\bbX_{s,t}^{(j);k,l}\right|^{2}\right]=\Ex\left[\left|\int_{s}^{t}dr\left\langle W,\bS_{j}\cE_{\crv}\bOne_{[s,r]}\right\rangle ^{k}\left\langle W,f_{r}\right\rangle ^{l}\right|^{2}\right]\\
 & =\int_{s}^{t}\int_{s}^{t}\Ex\left[\left\langle W,\bS_{j}\cE_{\crv}\bOne_{[s,r_{1}]}\right\rangle ^{k}\left\langle W,f_{r_{1}}\right\rangle ^{l}\left\langle W,\bS_{j}\cE_{\crv}\bOne_{[s,r_{2}]}\right\rangle ^{k}\left\langle W,f_{r_{2}}\right\rangle ^{l}\right]dr_{2}dr_{1}\\
 & =\int_{s}^{t}\int_{s}^{t}\Big(\delta_{kl}\left\langle \bS_{j}\cE_{\crv}\bOne_{[s,r_{1}]},f_{r_{1}}\right\rangle \delta_{kl}\left\langle \bS_{j}\cE_{\crv}\bOne_{[s,r_{2}]},f_{r_{2}}\right\rangle dr_{2}dr_{1}\\
 & \quad+\left\langle \bS_{j}\cE_{\crv}\bOne_{[s,r_{1}]},\bS_{j}\cE_{\crv}\bOne_{[s,r_{2}]}\right\rangle \left\langle f_{r_{1}},f_{r_{2}}\right\rangle \\
 & \quad+\delta_{kl}\left\langle \bS_{j}\cE_{\crv}\bOne_{[s,r_{1}]},f_{r_{2}}\right\rangle \delta_{kl}\left\langle f_{r_{1}},\bS_{j}\cE_{\crv}\bOne_{[s,r_{2}]}\right\rangle \Big)\\
 & =\delta_{kl}\left(\int_{s}^{t}dr\left\langle \bS_{j}\cE_{\crv}\bOne_{[s,r]},f_{r}\right\rangle \right)^{2}\\
 & \quad+\int_{s}^{t}dr_{1}\int_{s}^{t}dr_{2}\left\langle \bS_{j}\cE_{\crv}\bOne_{[s,r_{1}]},\bS_{j}\cE_{\crv}\bOne_{[s,r_{2}]}\right\rangle \left\langle f_{r_{1}},f_{r_{2}}\right\rangle \\
 & \quad+\delta_{kl}\int_{s}^{t}\int_{s}^{t}\left\langle \bS_{j}\cE_{\crv}\bOne_{[s,r_{1}]},f_{r_{2}}\right\rangle \left\langle \bS_{j}\cE_{\crv}\bOne_{[s,r_{2}]},f_{r_{1}}\right\rangle dr_{2}dr_{1}\ \ \\
 & =:\symb{{\rm (I)}+{\rm (II)}+{\rm (III)}}\ 
\end{align*}
By Lemma \ref{thm:|<S_jcEchi_[s,r1],f_{r2}>|<C} we find
\[
{\rm (I)}\le C_{1}\delta_{kl}(t-s)^{2}\ 
\]
By Lemma \ref{thm:|<ScEchi,ScEchi>|<} we find
\begin{align*}
 & {\rm (II)}=\int_{s}^{t}\int_{s}^{t}\left\langle \bS_{j}\cE_{\crv}\bOne_{[s,r_{1}]},\bS_{j}\cE_{\crv}\bOne_{[s,r_{2}]}\right\rangle \left\langle f_{r_{1}},f_{r_{2}}\right\rangle dr_{2}dr_{1}\\
 & \le\int_{s}^{t}\int_{s}^{t}C_{2}(t-s)\left|\left\langle f_{r_{1}},f_{r_{2}}\right\rangle \right|dr_{2}dr_{1}\\
 & \le\int_{s}^{t}C_{2}(t-s)\int_{\R}\left|\left\langle f_{r_{1}},f_{r_{2}}\right\rangle \right|dr_{2}dr_{1}\ 
\end{align*}
Hence by Prop \ref{thm:int_Rdr|<f1,f2>|<}, we have

\[
{\rm (II)}\le C_{3}(t-s)^{2}\ 
\]
By Lemma \ref{thm:|<S_jcEchi_[s,r1],f_{r2}>|<C} we find
\[
{\rm (III)}\le\delta_{kl}\int_{s}^{t}\int_{s}^{t}\ C_{5}dr_{2}dr_{1}=\delta_{kl}C_{4}(t-s)^{2}\ 
\]
Thus we have
\[
\Ex\left[\left|\bbX_{s,t}^{(j);k,l}\right|^{2}\right]={\rm (I)}+{\rm (II)}+{\rm (III)}\le C_{5}(t-s)^{2}.
\]

\end{proof}
Notice the following properties of delta functions:
\begin{lem}
\label{thm:marginal-delta}\thlab{thm:marginal-delta} Let $\delta\in\scS'(\R^{2})$
denote the Dirac delta function, and suppose that $D\subset\R^{2}$
is bounded and measurable. Then

(i) If $0\in D^{\circ}$, $\lim_{j,j'\to\infty}\left\langle \bS_{j}\bOne_{D},\bS_{j'}\delta\right\rangle =1$.

(ii) If $0\in\left(\R^{2}\setminus D\right)^{\circ}$, $\lim_{j,j'\to\infty}\left\langle \bS_{j}\bOne_{D},\bS_{j'}\delta\right\rangle =0$.

(iii) If $0\in\di D$ and $\di D$ is a smooth curve on some neighborhood
of $0$, $\lim_{j,j'\to\infty}\left\langle \bS_{j}\bOne_{D},\bS_{j'}\delta\right\rangle =\frac{1}{2}$.
\end{lem}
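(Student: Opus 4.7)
Since $\bS_{j'}\delta = \chchi_{j'}*\delta = \chchi_{j'}$ and $\chchi_{j'}$ is even (being the inverse Fourier transform of the real, radial function $\bchi_{j'}$), Fubini gives
\[
\left\langle \bS_j\bOne_D,\bS_{j'}\delta\right\rangle \;=\; \int_{\R^{2}}(\chchi_j*\bOne_D)(x)\,\chchi_{j'}(x)\,dx \;=\; \int_D\psi_{j,j'}(y)\,dy,
\]
where $\psi_{j,j'}:=\chchi_j*\chchi_{j'}$. I will use four properties of this family: (a) $\int_{\R^{2}}\psi_{j,j'}=\bchi_j(0)\bchi_{j'}(0)=1$; (b) $\psi_{j,j'}(-y)=\psi_{j,j'}(y)$; (c) for any $r>0$ and $N\in\N$, $\int_{|y|>r}|\psi_{j,j'}|\le C_N(2^{\min(j,j')}r)^{-N}$; (d) $\|\psi_{j,j'}\|_{L^{\infty}}\le C\,2^{2\min(j,j')}$. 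All four are immediate from $\chchi_j(y)=2^{2j}\chchi_0(2^j y)$, the Schwartz decay of $\chchi_0$, Young's convolution inequality and a standard tail split.

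Cases (i) and (ii) are then immediate from (a) and (c). If $B(0,r)\subset D$ then
\[
\int_D\psi_{j,j'}=1-\int_{\R^{2}\setminus B(0,r)}\psi_{j,j'}+\int_{D\setminus B(0,r)}\psi_{j,j'}\longrightarrow 1;
\]
if $D\subset\R^{2}\setminus B(0,r)$ then $\left|\int_D\psi_{j,j'}\right|\le\int_{|y|>r}|\psi_{j,j'}|\to0$.

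For case (iii), rotate coordinates so that the tangent half-plane to $D$ at $0$ is $H=\{y_1<0\}$. Since $\R^{2}=H\cup(-H)$ up to the axis $\{y_1=0\}$, properties (a) and (b) give $\int\psi_{j,j'}\bOne_H=\tfrac{1}{2}$, and it remains to show $\int\psi_{j,j'}(\bOne_D-\bOne_H)\,dy\to0$. Smoothness of $\di D$ at $0$ yields a local parametrisation $\di D=\{y_1=\phi(y_2)\}$ with $\phi(0)=\phi'(0)=0$, hence $|\phi(y_2)|\le Cy_2^{2}$; therefore $\bOne_D-\bOne_H$ is supported inside $(\R^{2}\setminus B(0,\ep))\cup L_\ep$, where $L_\ep:=\{(y_1,y_2):|y_1|\le Cy_2^{2},\ |y_2|\le\ep\}$. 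The piece outside $B(0,\ep)$ is handled by (c). For the lens piece I insert a mesoscopic scale $\delta_k:=2^{-4k/5}$ with $k:=\min(j,j')$ and split
\[
\int_{L_\ep}|\psi_{j,j'}|\;\le\;\|\psi_{j,j'}\|_{L^{\infty}}\cdot\Leb\!\bigl(L_\ep\cap B(0,\delta_k)\bigr)\,+\,\int_{|y|>\delta_k}|\psi_{j,j'}|.
\]
By (d) and $\Leb(L_\ep\cap B(0,\delta_k))\lesssim\delta_k^{3}$, the first term is $\lesssim 2^{2k-12k/5}=2^{-2k/5}\to0$, while (c) applied with $r=\delta_k$ makes the second term $\lesssim (2^{k}\delta_k)^{-N}=2^{-Nk/5}\to0$ for any large $N$.

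The only delicate point is the parabolic-lens estimate in case (iii): the $L^{\infty}$ blow-up $\|\psi_{j,j'}\|_{L^{\infty}}\lesssim 2^{2k}$ must be balanced against the cubic vanishing of the lens measure, which is precisely what the intermediate cutoff $\delta_k=2^{-4k/5}$ achieves. Everything else---the initial rewriting, the cases (i) and (ii), and the symmetry reduction to the tangent half-plane in (iii)---is essentially formal once the approximate-identity properties (a)--(d) are in hand.
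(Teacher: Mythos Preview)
Your argument is correct. The rewriting $\langle\bS_j\bOne_D,\bS_{j'}\delta\rangle=\int_D\psi_{j,j'}$ with $\psi_{j,j'}=\chchi_j*\chchi_{j'}$ is valid, properties (a)--(d) hold for the reasons you give, and the mesoscopic cutoff $\delta_k=2^{-4k/5}$ cleanly balances the $L^\infty$ blow-up against the cubic lens measure. One small point worth making explicit: the rotation step in (iii) is legitimate because $\psi_{j,j'}$, being a convolution of two radial functions, is itself radial.

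The paper's route is slightly different in its reduction. It first observes the algebraic identity $\bS_j\bS_{j'}=\bS_{\min(j,j')}$ for $j\neq j'$ (which holds because $\bchi_{j'}\equiv1$ on $\supp\bchi_j$ when $j<j'$), so that $\langle\bS_j\bOne_D,\bS_{j'}\delta\rangle=\langle\bOne_D,\bS_{\min(j,j')}\delta\rangle$ reduces everything to a single scale. It then performs the dilation $x\mapsto 2^jx$ and applies dominated convergence directly; for (iii) it writes out the half-plane integral explicitly and uses the evenness of $\chchi_0$ in a hands-on computation rather than your lens-plus-tail split. Your approach is a bit more robust---it never uses the exact support relations among the $\bchi_j$ and would go through unchanged for any radial Schwartz mollifier family---whereas the paper's is more direct once one has the Littlewood--Paley identity $\bS_j\bS_{j'}=\bS_{\min(j,j')}$ in hand.
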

{}
\begin{prop}
\label{thm:lim_{jj'}|bbX-bbX|=00003D0}\thlab{thm:lim\_\{jj'\}|bbX-bbX|=0}For
each $\crv\in\Crv_{\infty}$ and $s,t\in[0,1]$, $\bigl(\bbX_{\crv;s,t}^{(j)}\bigr)_{j\ge-1}$
is Cauchy in $L^{p}(\Prob,\fg)$ for any $p\in[1,\infty)$, i.e.
\[
\lim_{j,j'\to\infty}\left\Vert \bbX_{\crv;s,t}^{(j')}-\bbX_{\crv;s,t}^{(j)}\right\Vert _{L^{p}(\Prob,\fg\otimes\fg)}=0.
\]
\end{prop}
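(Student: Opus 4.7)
The plan is to show Cauchyness in $L^2$ via a Wick expansion and then upgrade to every $L^p$ using Gaussian hypercontractivity. Because $\bbX^{(j)}_{s,t}$ sits in the second Wiener chaos of $W$ (it is a quadratic functional of a Gaussian), \cite[Theorem 3.50]{Jan97} makes all $L^p$-norms on this chaos equivalent, so it suffices to show Cauchyness in $L^2(\Prob,\fg\otimes\fg)$. By the identity
\[
\Ex\bigl|\bbX^{(j)}_{s,t} - \bbX^{(j')}_{s,t}\bigr|^2 = \Ex\bigl|\bbX^{(j)}_{s,t}\bigr|^2 - 2\,\Ex\bigl[\bbX^{(j)}_{s,t}\cdot\bbX^{(j')}_{s,t}\bigr] + \Ex\bigl|\bbX^{(j')}_{s,t}\bigr|^2
\]
(inner product in $\fg\otimes\fg$), this reduces to showing, componentwise, that $\Ex\bigl[\bbX^{(j);k,l}_{s,t}\,\bbX^{(j');k,l}_{s,t}\bigr]\to L_{k,l}$ as $\min(j,j')\to\infty$ for some limit independent of the mode of convergence of $(j,j')$ — because then both the diagonal $j=j'$ and the mixed term tend to the same $L_{k,l}$, and the difference vanishes.

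Mirroring the proof of Prop.\ \ref{thm:Ex[|bbX|^2]<}, I would apply the Gaussian moment identity (\ref{eq:ExABCD-gauss}) to the integrand
\[
\Ex\bigl[\bigl\langle W,\bS_j\cE_\crv\bOne_{[s,r_1]}\bigr\rangle^k\,\bigl\langle W,f^j_{\crv,r_1}\bigr\rangle^l\,\bigl\langle W,\bS_{j'}\cE_\crv\bOne_{[s,r_2]}\bigr\rangle^k\,\bigl\langle W,f^{j'}_{\crv,r_2}\bigr\rangle^l\bigr],
\]
producing $\Ex[\bbX^{(j);k,l}_{s,t}\bbX^{(j');k,l}_{s,t}]$ as a sum of three Wick-contracted $(r_1,r_2)$-double integrals. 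The uniform bounds of Lemmas \ref{thm:|<S_jcEchi_[s,r1],f_{r2}>|<C}, \ref{thm:int_Rdr|<f1,f2>|<}, and \ref{thm:|<ScEchi,ScEchi>|<} extend verbatim to two distinct smoothing indices $(j,j')$: their proofs rely only on the scale-invariant estimates $\|\chchi_n\|_{L^1}=\|\chchi_0\|_{L^1}$, $\|\bS_n u\|_{L^\infty}\le\|\chchi_0\|_{L^1}\|u\|_{L^\infty}$, together with (a Lemma \ref{thm:kappa-L1}-style computation giving) $\|\kappa_{j,j'}\|_{L^1}\le\|\chchi_0\|_{L^1}^2$ for the mixed kernel $\kappa_{j,j'}(x):=\langle\chchi_j(\cdot-x),\chchi_{j'}\rangle_{L^2(\R^2)}$. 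This furnishes $(j,j')$-uniform integrable majorants for all three integrands and enables dominated convergence once pointwise (a.e.\ in $(r_1,r_2)$) limits are identified.

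The pure $\bS$-type contractions $\bigl\langle\bS_j\cE_\crv\bOne_{[s,r_1]},\bS_{j'}\cE_\crv\bOne_{[s,r_2]}\bigr\rangle$ converge pointwise to $\bigl\langle\cE_\crv\bOne_{[s,r_1]},\cE_\crv\bOne_{[s,r_2]}\bigr\rangle$ by strong $L^2$-convergence $\bS_n\to I$ combined with Lemma \ref{thm:cE_ch-well-def-1}; the mixed $\langle\bS g,f\rangle$-type contractions are rewritten via (\ref{eq:<f_s,f_t>}) and identified by a Lemma \ref{thm:marginal-delta}-style argument. The main obstacle is the $\langle f^j_{\crv,r_1},f^{j'}_{\crv,r_2}\rangle$ factor: as $\min(j,j')\to\infty$ it converges to $0$ pointwise on $\{\crv_2(r_1)\neq\crv_2(r_2)\}$ but carries a non-trivial $\delta$-mass on the codimension-1 set $\{\crv_2(r_1)=\crv_2(r_2)\}$, so bare dominated convergence does not recover the correct integral limit. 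I would handle this by splitting $[s,t]^2$ at level $\eta>0$ into $\{|\crv_2(r_1)-\crv_2(r_2)|\ge\eta\}$ and its complement: on the first piece the $\R^2$-localization of $\chchi_n$ at scale $2^{-\min(j,j')}$ forces $\langle f^j_{\crv,r_1},f^{j'}_{\crv,r_2}\rangle\to 0$ uniformly in $(r_1,r_2)$, so dominated convergence with the $(j,j')$-uniform majorants above gives the off-diagonal contribution to $L_{k,l}$; on the diagonal piece, a direct estimate through (\ref{eq:<f_s,f_t>}) and $\|\kappa_{j,j'}\|_{L^1}\le\|\chchi_0\|_{L^1}^2$ shows the contribution is $O(\eta)$ uniformly in $(j,j')$. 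Taking $\min(j,j')\to\infty$ and then $\eta\to 0$ establishes existence of $L_{k,l}$, hence Cauchyness in $L^2$ and via hypercontractivity in every $L^p(\Prob,\fg\otimes\fg)$.
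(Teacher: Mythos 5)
Your overall strategy (reduce to $L^{2}$ by hypercontractivity, then show the cross-moments $\Ex\bigl[\bbX_{s,t}^{(j);k,l}\bbX_{s,t}^{(j');k,l}\bigr]$ converge to a common limit and conclude by polarization) is legitimate in principle, and it is genuinely different from the paper's route: the paper never touches the cross-moments, but instead telescopes the difference $\bbX^{(j')}-\bbX^{(j)}$ into two terms $N_{1},N_{2}$ each of which carries a factor of the \emph{difference operator} $\bS_{j,j'}=\bS_{j'}-\bS_{j}$ (Lemmas \ref{thm:|X^(j+1)-X^(j)|_{L^p}<N1+N2-jj'}--\ref{thm:lim_{jj'}N_2=00003D0}), so that after the Wick expansion every resulting integral contains a kernel such as $R_{j,j'}=\langle\chchi_{j,j'},\chchi_{j,j'}(\cdot-x)\rangle$ whose pairing with continuous compactly supported functions tends to zero. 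That structural feature is exactly what your version lacks, and it is where your argument breaks.

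The gap is in your treatment of the second Wick contraction, the term
\[
{\rm (II)}_{j,j'}=\int_{s}^{t}\!\!\int_{s}^{t}\bigl\langle \bS_{j}\cE_{\crv}\bOne_{[s,r_{1}]},\bS_{j'}\cE_{\crv}\bOne_{[s,r_{2}]}\bigr\rangle\,\bigl\langle f_{\crv,r_{1}}^{j},f_{\crv,r_{2}}^{j'}\bigr\rangle\,dr_{2}\,dr_{1}.
\]
Your claim that the near-diagonal piece $\{|\crv_{2}(r_{1})-\crv_{2}(r_{2})|<\eta\}$ contributes $O(\eta)$ uniformly in $(j,j')$ is false. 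By (\ref{eq:<f_s,f_t>}), the kernel $\kappa_{j,j'}$ concentrates at scale $2^{-\min(j,j')}$ while keeping total mass $\int\kappa_{j,j'}=\bigl(\int\chchi_{0}\bigr)^{2}=1$; consequently the $r_{2}$-integral of $|\langle f_{r_{1}}^{j},f_{r_{2}}^{j'}\rangle|$ over the window $|\crv_{2}(r_{1})-\crv_{2}(r_{2})|<\eta$ absorbs essentially \emph{all} of that mass as soon as $\eta\gg 2^{-\min(j,j')}$, so the diagonal contribution is $O(1)$, not $O(\eta)$ --- and it is precisely this contribution that carries the (generically nonzero) limit. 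A concrete check: for $\crv_{1}\equiv c$, $\crv_{2}(r)=r$ and $k\neq l$, the terms (I) and (III) vanish ($\delta_{kl}=0$), yet $\lim_{j}\Ex\bigl[|\bbX_{s,t}^{(j);k,l}|^{2}\bigr]=c^{2}(t-s)^{2}/2>0$ (it is the second moment of a L\'evy-area-type integral of Brownian motion), so ${\rm (II)}_{j,j}$ cannot tend to $0$ as your splitting would force. To rescue the polarization strategy you would have to identify the singular limit of ${\rm (II)}_{j,j'}$ as an explicit integral over the level sets $\{\crv_{2}(r_{1})=\crv_{2}(r_{2})\}$ and prove it is independent of the mode in which $j,j'\to\infty$; this is at least as delicate as the paper's Lemma \ref{thm:lim_{j,j'}I_2=00003D0}, which sidesteps the issue entirely because there the analogous kernel is $R_{j,j'}$, whose Fourier support escapes to infinity.
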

\begin{proof}
This result follows immediately from Lemmas \ref{thm:|X^(j+1)-X^(j)|_{L^p}<N1+N2-jj'},
\ref{thm:lim_{j,j'}N_1=00003D0} and \ref{thm:lim_{jj'}N_2=00003D0}
below.\end{proof}
\begin{lem}
\label{thm:|X^(j+1)-X^(j)|_{L^p}<N1+N2-jj'}\thlab{thm:|X\textasciicircum{}(j+1)-X\textasciicircum{}(j)|\_\{L\textasciicircum{}p\}<N1+N2-jj'}
For any $\crv\in\Crv$,
\[
\left\Vert \bbX_{\crv;s,t}^{(j');k,l}-\bbX_{\crv;s,t}^{(j);k,l}\right\Vert _{L^{2}(\Prob)}\le N_{1}+N_{2}\ 
\]
where
\begin{align}
 & \symb{N_{1}}:=\left\Vert \int_{s}^{t}\left\langle W,\bS_{j'}\cE_{\crv}\bOne_{[s,r]}\right\rangle ^{k}\left\langle W,f_{\crv,r}^{j'}-f_{\crv,r}^{j}\right\rangle ^{l}dr\right\Vert _{L^{2}(\Prob)}\label{eq:def:N_1}\\
 & \symb{N_{2}}:=\left\Vert \int_{s}^{t}\left\langle W,f_{\crv,r}^{j}\right\rangle ^{l}\left\langle W,\bS_{j'}\cE_{\crv}\bOne_{[s,r]}-\bS_{j}\cE_{\crv}\bOne_{[s,r]}\right\rangle ^{k}dr\right\Vert _{L^{2}(\Prob)}\label{eq:def:N_2}
\end{align}
\end{lem}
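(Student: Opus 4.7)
The plan is to apply a straightforward telescoping decomposition of the product inside the integrand defining $\bbX^{(j);k,l}_{\crv;s,t}$, followed by linearity of $W$ and the triangle inequality in $L^2(\Prob)$; no estimates of individual terms are needed here since the statement only asserts the bound $N_1 + N_2$, leaving the actual decay in $j,j'$ to the subsequent lemmas.

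First, I would unfold the definition (\ref{eq:def:bbX}) componentwise, noting that for each $r$,
\[
\frac{d}{dr}X^{(j)}_r = \left\langle W, f^{j}_{\crv,r}\right\rangle, \qquad X^{(j)}_{s,r} = \left\langle W, \bS_{j}\cE_{\crv}\bOne_{[s,r]}\right\rangle,
\]
so that
\[
\bbX^{(j);k,l}_{\crv;s,t} = \int_{s}^{t} \left\langle W,\bS_{j}\cE_{\crv}\bOne_{[s,r]}\right\rangle^{k} \left\langle W,f^{j}_{\crv,r}\right\rangle^{l}\, dr,
\]
and similarly for $j'$ in place of $j$.

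Second, with the abbreviations $A^{(m)}_r := \langle W,\bS_{m}\cE_{\crv}\bOne_{[s,r]}\rangle^k$ and $B^{(m)}_r := \langle W,f^{m}_{\crv,r}\rangle^l$, I would insert the mixed term $A^{(j')}_r B^{(j)}_r$ and use the product identity
\[
A^{(j')}_r B^{(j')}_r - A^{(j)}_r B^{(j)}_r = A^{(j')}_r\bigl(B^{(j')}_r - B^{(j)}_r\bigr) + \bigl(A^{(j')}_r - A^{(j)}_r\bigr) B^{(j)}_r.
\]
By linearity of $W$ on $L^{2}(\R^{2})$ (and of the projection $\bS_m$), the differences combine inside a single inner product:
\[
B^{(j')}_r - B^{(j)}_r = \bigl\langle W,\, f^{j'}_{\crv,r} - f^{j}_{\crv,r}\bigr\rangle^{l},\qquad A^{(j')}_r - A^{(j)}_r = \bigl\langle W,\, \bS_{j'}\cE_{\crv}\bOne_{[s,r]} - \bS_{j}\cE_{\crv}\bOne_{[s,r]}\bigr\rangle^{k}.
\]

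Third, integrating this identity over $r\in[s,t]$ gives
\[
\bbX^{(j');k,l}_{\crv;s,t} - \bbX^{(j);k,l}_{\crv;s,t} = \int_{s}^{t} A^{(j')}_r\bigl(B^{(j')}_r - B^{(j)}_r\bigr)\,dr + \int_{s}^{t} B^{(j)}_r\bigl(A^{(j')}_r - A^{(j)}_r\bigr)\,dr,
\]
and the triangle inequality in $L^{2}(\Prob)$ applied to the two integrals yields precisely the bound $N_1 + N_2$ with $N_1, N_2$ as in (\ref{eq:def:N_1}), (\ref{eq:def:N_2}). There is essentially no obstacle here; the content is purely the product-splitting identity together with the linearity of the isometry $W$, and both terms are well-defined because $f^m_{\crv,r}, \bS_m \cE_\crv \bOne_{[s,r]} \in L^2(\R^2)$. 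The real work is deferred to the next two lemmas, which must control $N_1$ and $N_2$ via the Gaussian calculus used in the proof of Proposition \ref{thm:Ex[|bbX|^2]<}.
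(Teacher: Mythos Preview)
Your proof is correct and follows essentially the same route as the paper: unfold the integral representation of $\bbX^{(j);k,l}_{s,t}$, add and subtract the cross term $\langle W,\bS_{j'}\cE_{\crv}\bOne_{[s,r]}\rangle^{k}\langle W,f^{j}_{\crv,r}\rangle^{l}$, and apply the triangle inequality in $L^{2}(\Prob)$. The paper's visible proof is terser and cites (\ref{eq:ExABCD-gauss}), but that Gaussian moment identity is not actually needed here---it is used only in the subsequent lemmas to expand $N_{1}^{2}$ and $N_{2}^{2}$---so your observation that the present lemma requires nothing beyond the product-splitting identity and linearity is spot on.
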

\begin{proof}
By the definition (\ref{eq:def:bbX}) of $\bbX^{(j)}$, we see
\[
\bbX_{s,t}^{(j)}=\int_{s}^{t}\left\langle W,\bS_{j}\cE_{\crv}\bOne_{[s,r]}\right\rangle \otimes\left\langle W,f_{r}^{j}\right\rangle dr,
\]
and hence the bound easily follows from (\ref{eq:ExABCD-gauss}).

\end{proof}
Let 
\begin{equation}
\symb{\delta_{\crv,t}}:=\frac{d}{dt}\cE_{\crv}\bOne_{[0,t]}=\dot{\crv}_{2}(t)\int_{0}^{\crv_{1}(r)}\delta_{(\xi,\crv_{2}(t))}d\xi\in\scS'(\R^{2}).\ \label{eq:def:delta_{c,t}}
\end{equation}
where

\[
\delta_{x}(y):=\delta(y-x),\quad x,y\in\R^{2}.
\]

\begin{equation}
\symb{\bS_{j,j'}}:=\bS_{j'}-\bS_{j}=\sum_{i=j}^{j'-1}\bDelta_{i}.\ \label{eq:def:S_{jj'}}
\end{equation}

\begin{equation}
\symb{\bchi_{j,j'}}:=\bchi_{j'}-\bchi_{j}=\sum_{i=j}^{j'-1}\brho_{i}.\ \label{eq:def:chi_{jj'}}
\end{equation}
We see $f_{t}\equiv f_{\crv,t}^{j}=\bS_{j}\delta_{\crv,t}$.
\begin{lem}
\label{thm:N_1^2=00003DI_1^2+I_2+I_3-jj'}\thlab{thm:N\_1\textasciicircum{}2=I\_1\textasciicircum{}2+I\_2+I\_3-jj'}For
any $\crv\in\Crv$,
\[
N_{1}^{2}=I_{1}^{2}+I_{2}+I_{3}\ 
\]
where
\begin{align}
 & \symb{I_{1}}:=\int_{s}^{t}\delta_{kl}\left\langle \bS_{j'}\cE_{\crv}\bOne_{[s,r]},\ \bS_{j,j'}\delta_{\crv,r}\right\rangle dr\ \label{eq:def:I_1}\\
 & \symb{I_{2}}:=\int_{s}^{t}\int_{s}^{t}\left\langle \bS_{j'}\cE_{\crv}\bOne_{[s,r]},\ \bS_{j'}\cE_{\crv}\bOne_{[s,r']}\right\rangle \left\langle \bS_{j,j'}\delta_{\crv,r},\ \bS_{j,j'}\delta_{\crv,r'}\right\rangle dr'dr\label{eq:def:I_2}\\
 & \symb{I_{3}}:=\delta_{kl}\int_{s}^{t}\int_{s}^{t}\left\langle \bS_{j'}\cE_{\crv}\bOne_{[s,r]},\ \bS_{j,j'}\delta_{\crv,r'}\right\rangle \left\langle \bS_{j,j'}\delta_{\crv,r},\ \bS_{j'}\cE_{\crv}\bOne_{[s,r']}\right\rangle dr'dr\ \label{eq:def:I_3}
\end{align}
\end{lem}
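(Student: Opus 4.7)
The plan is to expand $N_1^2$ as a double integral over $[s,t]^2$ of a fourth moment of Gaussian random variables and then apply Isserlis's (Wick's) formula, using the covariance structure of the white noise $W$. The key preliminary observation is the identity
\[
f_{\crv,r}^{j'} - f_{\crv,r}^{j} = \bS_{j'}\delta_{\crv,r} - \bS_{j}\delta_{\crv,r} = \bS_{j,j'}\delta_{\crv,r},
\]
which follows directly from $f_{\crv,r}^{j} = \bS_{j}\delta_{\crv,r}$ (a consequence of (\ref{eq:def:f_t}) and (\ref{eq:def:delta_{c,t}})) and the definition (\ref{eq:def:S_{jj'}}). Substituting this into the definition (\ref{eq:def:N_1}) of $N_1$ gives
\[
N_1^2 = \int_s^t\!\int_s^t \Ex\Bigl[\langle W,\bS_{j'}\cE_\crv\bOne_{[s,r]}\rangle^{k}\,\langle W,\bS_{j,j'}\delta_{\crv,r}\rangle^{l}\,\langle W,\bS_{j'}\cE_\crv\bOne_{[s,r']}\rangle^{k}\,\langle W,\bS_{j,j'}\delta_{\crv,r'}\rangle^{l}\Bigr]\,dr'\,dr.
\]

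Next, I would apply the Gaussian fourth-moment identity (\ref{eq:ExABCD-gauss}) to the integrand with $A,B,C,D$ being the four centered Gaussian factors above. This produces three pairings. For each pairing I use the covariance of $W$ in the form $\Ex[\langle W,f\rangle^{a}\langle W,g\rangle^{b}] = \delta_{ab}\,\langle f,g\rangle_{L^{2}(\R^{2})}$. The pairing $(AB)(CD)$ yields the factor $\delta_{kl}\langle \bS_{j'}\cE_{\crv}\bOne_{[s,r]},\bS_{j,j'}\delta_{\crv,r}\rangle$ times the analogous expression at $r'$, and after integrating $r,r'$ separately this becomes exactly $I_1^2$ (using $\delta_{kl}^2=\delta_{kl}$ to match definition (\ref{eq:def:I_1})). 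The pairing $(AC)(BD)$ produces $\langle\bS_{j'}\cE_\crv\bOne_{[s,r]},\bS_{j'}\cE_\crv\bOne_{[s,r']}\rangle\langle\bS_{j,j'}\delta_{\crv,r},\bS_{j,j'}\delta_{\crv,r'}\rangle$ (no Kronecker delta survives since the paired indices agree), which is exactly $I_2$ as in (\ref{eq:def:I_2}). The pairing $(AD)(BC)$ yields $\delta_{kl}\langle\bS_{j'}\cE_\crv\bOne_{[s,r]},\bS_{j,j'}\delta_{\crv,r'}\rangle\langle\bS_{j,j'}\delta_{\crv,r},\bS_{j'}\cE_\crv\bOne_{[s,r']}\rangle$, matching $I_3$ from (\ref{eq:def:I_3}).

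Summing the three pairings and interchanging sum and integral gives the claimed decomposition $N_1^2 = I_1^2 + I_2 + I_3$. There is no serious obstacle here; the only care needed is bookkeeping the Kronecker deltas and confirming that all the relevant pairings $\langle \cdot,\cdot\rangle$ between $\bS_{j'}\cE_\crv\bOne_{[s,\cdot]}$ and $\bS_{j,j'}\delta_{\crv,\cdot}$ are well-defined $L^{2}$ pairings---but this is immediate because $\bS_{j,j'}\delta_{\crv,r}$ is a Schwartz function, being a finite-band Littlewood--Paley projection of a compactly supported distribution.
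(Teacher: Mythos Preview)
Your proposal is correct and follows exactly the approach the paper takes: expand $N_1^2$ as a double integral, apply the Gaussian fourth-moment identity (\ref{eq:ExABCD-gauss}), and identify the three Wick pairings with $I_1^2$, $I_2$, and $I_3$ using the white-noise covariance $\Ex[\langle W,f\rangle^{a}\langle W,g\rangle^{b}]=\delta_{ab}\langle f,g\rangle_{L^2}$. The paper records this simply as ``a straightforward calculation, using (\ref{eq:ExABCD-gauss}),'' and your write-up fills in precisely those details.
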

\begin{proof}
.

By a straightforward calculation, using (\ref{eq:ExABCD-gauss}).%
\end{proof}
\begin{lem}
\label{thm:lim_{j,j'}I_1=00003D0}For any $\crv\in\Crv_{\Rot}$,
\begin{equation}
\lim_{j,j',j''\to\infty}\int_{s}^{t}\left\langle \bS_{j''}\cE_{\crv}\bOne_{[s,r]},\bS_{j,j'}\delta_{\crv,r}\right\rangle dr=0.\label{eq:lim_{j,j'}I_1=00003D0}
\end{equation}
Especially, $I_{1}:=\int_{s}^{t}\delta_{kl}\left\langle \bS_{j'}\cE_{\crv}\bOne_{[s,r]},\bS_{j,j'}\delta_{\crv,r}\right\rangle dr$
is Cauchy in $j,j'$, i.e. $\lim_{j,j'\to\infty}I_{1}=0$.\end{lem}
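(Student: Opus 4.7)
The plan is to unfold the distribution $\delta_{\crv,r}$ as a line integral in $\xi$, identify the pointwise limit via Lemma \ref{thm:marginal-delta}, and close the argument with two applications of dominated convergence. Using (\ref{eq:def:delta_{c,t}}), I first rewrite
\[
\langle \bS_{j''}\cE_{\crv}\bOne_{[s,r]},\,\bS_{j,j'}\delta_{\crv,r}\rangle
 \;=\; \dot\crv_{2}(r)\int_{0}^{\crv_{1}(r)}\bigl\langle \bS_{j''}\cE_{\crv}\bOne_{[s,r]},\,\bS_{j,j'}\delta_{(\xi,\crv_{2}(r))}\bigr\rangle\,d\xi,
\]
which reduces the task to controlling pairings against point masses on the horizontal slice at height $\crv_{2}(r)$, integrated over $(\xi,r)$ in a bounded rectangle.

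By Lemma \ref{thm:cE1[s,t]-finiteCombi}, $\cE_{\crv}\bOne_{[s,r]}=\sum_{|k|\le\Rot(\crv)}k\,\bOne_{D_k}$ with finitely many sets $D_k$ whose boundaries are piecewise smooth arcs of $\crv|_{[s,r]}$. For a.e.\ $\xi\in(0,\crv_{1}(r))$ (all but the finite set where the horizontal slice at height $\crv_2(r)$ meets a corner or a vertical tangency of some $\partial D_k$), Lemma \ref{thm:marginal-delta}(i)--(iii) applied to each $D_k$ yields a common limit for $\langle\bS_{j''}\bOne_{D_k},\bS_j\delta_{(\xi,\crv_{2}(r))}\rangle$ and $\langle\bS_{j''}\bOne_{D_k},\bS_{j'}\delta_{(\xi,\crv_{2}(r))}\rangle$ as $j,j',j''\to\infty$ (the value being $1$, $0$, or $1/2$ according as the point lies in $D_k^\circ$, outside $\overline{D_k}$, or on the smooth part of $\partial D_k$). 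Summing over $k$ and subtracting, the integrand $\langle \bS_{j''}\cE_{\crv}\bOne_{[s,r]},\,\bS_{j,j'}\delta_{(\xi,\crv_{2}(r))}\rangle$ tends to $0$ pointwise in $\xi$.

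For a dominating bound I observe that the proof of Lemma \ref{thm:|<S_jcEchi_[s,r1],f_{r2}>|<C} adapts directly to distinct Littlewood--Paley indices and to point-mass sources: since $\|\bS_{j_1}u\|_{L^\infty}\le\|\chchi_0\|_{L^1}\|u\|_{L^\infty}$ and $\|\bS_{j_2}\delta_x\|_{L^1}=\|\chchi_0\|_{L^1}$,
\[
\bigl|\langle \bS_{j''}\cE_\crv\bOne_{[s,r]},\,\bS_{j_2}\delta_x\rangle\bigr|\;\le\; \|\chchi_0\|_{L^1}^{2}\,\Rot(\crv)
\]
uniformly in $j'',j_2,x,r,s$. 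Dominated convergence in $\xi\in(0,\crv_1(r))$ then yields pointwise (in $r$) vanishing of the inner integral, and a second dominated convergence in $r\in[s,t]$, justified by the uniform bound $|\dot\crv_2(r)|\,\crv_1(r)\cdot 2\|\chchi_0\|_{L^1}^{2}\Rot(\crv)$, delivers (\ref{eq:lim_{j,j'}I_1=00003D0}). The main technical obstacle is the identification of the pointwise limit on the one-dimensional slice, which always sits on $\partial D_k$ for some $k$: the argument works only because Lemma \ref{thm:marginal-delta}(iii) gives the \emph{same} limit $1/2$ whichever of $\bS_j,\bS_{j'}$ is paired against $\delta_x$, so the difference genuinely vanishes rather than merely remaining bounded; one must also check that the exceptional set of $\xi$ (corners and vertical tangencies) is finite, which follows from the smoothness of $\crv$ and the finiteness of $\{D_k\}$ guaranteed by $\crv\in\Crv_\Rot$.
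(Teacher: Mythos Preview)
Your proof is correct and follows essentially the same route as the paper's: expand $\delta_{\crv,r}$ via (\ref{eq:def:delta_{c,t}}), apply Lemma~\ref{thm:marginal-delta} pointwise to see that the limits through $\bS_j$ and $\bS_{j'}$ coincide, and invoke the uniform bound of Lemma~\ref{thm:|<S_jcEchi_[s,r1],f_{r2}>|<C} for dominated convergence. The paper is terser, recording the common limit as $\tfrac12\dot\crv_2(r)\crv_1(r)$ and passing to the limit in a single $dr$-integral, whereas you make the decomposition via Lemma~\ref{thm:cE1[s,t]-finiteCombi} explicit and split the dominated convergence into $\xi$ and $r$ steps.
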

\begin{proof}
By Lemma \ref{thm:marginal-delta} and $\delta_{\crv,t}=\dot{\crv}_{2}(t)\int_{0}^{\crv_{1}(r)}\delta_{(\xi,\crv_{2}(t))}d\xi$,
we see
\[
\lim_{j',j''}\left\langle \bS_{j''}\cE_{\crv}\bOne_{[s,r]},\bS_{j'}\delta_{\crv,r}\right\rangle =\frac{1}{2}\dot{\crv}_{2}(r)\crv_{1}(r).
\]
Hence by Lemma \ref{thm:|<S_jcEchi_[s,r1],f_{r2}>|<C}, and the dominated
convergence, 
\[
\lim_{j',j''}\int_{s}^{t}\left\langle \bS_{j''}\cE_{\crv}\bOne_{[s,r]},\bS_{j'}\delta_{\crv,r}\right\rangle dr=\frac{1}{2}\int_{s}^{t}\dot{\crv}_{2}(r)\crv_{1}(r)dr\ 
\]
and hence (\ref{eq:lim_{j,j'}I_1=00003D0}) holds.\end{proof}
\begin{lem}
\label{thm:lim_{j,j'}I_2=00003D0}\thlab{thm:lim\_\{j,j'\}I\_2=0}Define
$I_{2}$ by (\ref{eq:def:I_2}). Then for any $\crv\in\Crv$, $\lim_{j,j'\to\infty}I_{2}=0$.\end{lem}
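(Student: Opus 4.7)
The plan is to rewrite $I_2$ as a squared $L^2$-norm on $\R^2\times\R^2$, perform a single integration by parts in the time variable, and exhibit uniform convergence to $0$ of the regularized kernels $v_r:=\bS_{j,j'}\cE_{\crv}\bOne_{[s,r]}$ in $L^2(\R^2)$. Writing also $u_r:=\bS_{j'}\cE_{\crv}\bOne_{[s,r]}$, so that $\dot u_r=\bS_{j'}\delta_{\crv,r}$ and $\dot v_r=\bS_{j,j'}\delta_{\crv,r}$, Fubini's theorem gives
\[
I_2 \;=\; \int_s^t\!\int_s^t \langle u_r,u_{r'}\rangle\,\langle \dot v_r,\dot v_{r'}\rangle\,dr\,dr' \;=\; \|G\|_{L^2(\R^2\times\R^2)}^2,\qquad G(x,y):=\int_s^t u_r(x)\,\dot v_r(y)\,dr.
\]
Integrating by parts in $r$, using $u_s=v_s=0$, yields $G=u_t\otimes v_t-H$ with $H(x,y):=\int_s^t \dot u_r(x)\,v_r(y)\,dr$, and hence $\|G\|_{L^2}^{2}\le 2\|u_t\|_{L^2}^{2}\|v_t\|_{L^2}^{2}+2\|H\|_{L^2}^{2}$.

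The first term tends to zero because $\|u_t\|_{L^2}\le\|\cE_{\crv}\bOne_{[s,t]}\|_{L^2}$ is bounded while $\|v_t\|_{L^2}=\|\bS_{j,j'}\cE_{\crv}\bOne_{[s,t]}\|_{L^2}\to 0$ as $j,j'\to\infty$, the latter because $\bS_j\to I$ strongly on $L^2$. For the second term, another application of Fubini gives
\[
\|H\|_{L^2}^{2} \;=\; \int_s^t\!\int_s^t \langle \dot u_r,\dot u_{r'}\rangle\,\langle v_r,v_{r'}\rangle\,dr\,dr' \;\le\; \Bigl(\sup_{r\in[s,t]}\|v_r\|_{L^2}\Bigr)^{\!2}\int_s^t\!\int_s^t|\langle \dot u_r,\dot u_{r'}\rangle|\,dr\,dr',
\]
and the last double integral is bounded by $C(t-s)$ uniformly in $j'$ by Prop.~\ref{thm:int_Rdr|<f1,f2>|<} applied to $f^{j'}_{\crv,\cdot}$.

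Everything thus reduces to the single uniform strong-convergence statement $\sup_{r\in[s,t]}\|\bS_{j,j'}\cE_{\crv}\bOne_{[s,r]}\|_{L^2}\to 0$, which I expect to be the main obstacle. It follows by a compactness argument: Lemma~\ref{thm:|cE.chi[s,t]|_{B_{2,infty}}<} together with the embedding $B_{2,\infty}^{\besovorder}\hookrightarrow L^2$ (valid for $\besovorder\in(0,1/2)$) shows that $r\mapsto\cE_{\crv}\bOne_{[s,r]}$ is H\"older continuous from $[s,t]$ into $L^2(\R^2)$, so its image is compact in $L^2(\R^2)$. Since $\{\bS_{j,j'}\}$ is uniformly bounded on $L^2$ and converges to $0$ in the strong operator topology, the convergence is automatically uniform on this compact image, and combining the three estimates yields $I_2\to 0$.
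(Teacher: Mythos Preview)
Your argument is clean and each step is sound, but it does not prove the lemma as stated: you invoke Prop.~\ref{thm:int_Rdr|<f1,f2>|<} to bound $\int_s^t\!\int_s^t|\langle\dot u_r,\dot u_{r'}\rangle|\,dr\,dr'$, and that proposition is stated only for $\crv\in\Crv_\infty$; likewise your compactness argument rests on Lemma~\ref{thm:|cE.chi[s,t]|_{B_{2,infty}}<}, which requires $\crv\in\Crv_{\Rot}$. So what you have actually established is $\lim_{j,j'}I_2=0$ for $\crv\in\Crv_\infty$, not for all $\crv\in\Crv$. This weaker statement is in fact all the paper ever uses (Prop.~\ref{thm:lim_{jj'}|bbX-bbX|=00003D0} already assumes $\crv\in\Crv_\infty$), so the gap is harmless for the downstream results, but you should flag the strengthened hypothesis.

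Your route is genuinely different from the paper's. The paper does not pass to $L^2(\R^2\times\R^2)$ or integrate by parts; instead it expands $\langle\bS_{j,j'}\delta_{\crv,r},\bS_{j,j'}\delta_{\crv,r'}\rangle$ via the kernel $R_{j,j'}(x)=\langle\chchi_{j,j'},\chchi_{j,j'}(\cdot-x)\rangle$, reorganises $I_2$ as a single pairing $\langle R_{j,j'},G\rangle$ with a continuous compactly supported $G$, and uses that $R_{j,j'}$ has bounded $L^1$-norm, vanishing total mass, and concentrates at the origin, so that $\langle R_{j,j'},G\rangle\to0$. This avoids both Prop.~\ref{thm:int_Rdr|<f1,f2>|<} and the Besov estimate, which is why the paper can assert the result for general $\crv\in\Crv$. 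Your approach, by contrast, is more transparent---once one sees $I_2=\|G\|_{L^2}^2\ge0$, the rest is soft functional analysis (strong operator convergence uniform on compacta), and the only hard input is the uniform bound from Prop.~\ref{thm:int_Rdr|<f1,f2>|<}. So: your proof is arguably simpler and more robust in spirit, at the cost of the extra hypothesis $\crv\in\Crv_\infty$; the paper's proof is more delicate but covers the generality actually claimed.
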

\begin{proof}
Suppose $j<j'$. Let

\[
\symb{R_{j,j'}(x)}:=\left\langle \chchi_{j,j'},\chchi_{j,j'}\left(\cdot-x\right)\right\rangle ,\quad x\in\R^{2}.
\]
Then we have

\begin{align}
 & \left\langle \bS_{j,j'}\delta_{\crv,r},\bS_{j,j'}\delta_{\crv,r'}\right\rangle \nonumber \\
 & =\left\langle \bS_{j,j'}\dot{\crv}_{2}(r)\int_{0}^{\crv_{1}(r)}\delta_{(x_{1},\crv_{2}(r)}dx_{1},\ \bS_{j,j'}\dot{\crv}_{2}(r')\int_{0}^{\crv_{1}(r')}\delta_{(x_{1}',\crv_{2}(r')}dx_{1}'\right\rangle \nonumber \\
 & =\dot{\crv}_{2}(r)\int_{0}^{\crv_{1}(r)}\dot{\crv}_{2}(r')\int_{0}^{\crv_{1}(r')}\left\langle \bS_{j,j'}\delta_{(x_{1},\crv_{2}(r))},\ \bS_{j,j'}\delta_{(x_{1}',\crv_{2}(r'))}\right\rangle dx_{1}'dx_{1}\ \nonumber \\
 & =\dot{\crv}_{2}(r)\int_{0}^{\crv_{1}(r)}\dot{\crv}_{2}(r')\int_{0}^{\crv_{1}(r')}\left\langle \bS_{j,j'}\delta,\ \bS_{j,j'}\delta_{(x_{1}',\crv_{2}(r'))-(x_{1},\crv_{2}(r))}\right\rangle dx_{1}'dx_{1}\nonumber \\
 & =\dot{\crv}_{2}(r)\int_{0}^{\crv_{1}(r)}dx_{1}\dot{\crv}_{2}(r')\int_{0}^{\crv_{1}(r')}dx_{1}'\left\langle \chchi_{j,j'},\chchi_{j,j'}\left(\cdot-(x_{1}',\crv_{2}(r'))+(x_{1},\crv_{2}(r)\right)\right\rangle \nonumber \\
 & =\dot{\crv}_{2}(r)\int_{0}^{\crv_{1}(r)}\dot{\crv}_{2}(r')\int_{0}^{\crv_{1}(r')}R_{j,j'}\left(-(x_{1}',\crv_{2}(r'))+(x_{1},\crv_{2}(r))\right)dx_{1}'dx_{1}\ \label{eq:=00003DcintcintR}
\end{align}
Let
\begin{align*}
 & \symb{F_{j,j',r'}(r)}:=\left\langle \bS_{j'}\cE_{\crv}\bOne_{[s,r]},\bS_{j'}\cE_{\crv}\bOne_{[s,r']}\right\rangle ,\quad\\
 & \symb{R_{j,j'}'(x)}:=R_{j,j'}\left(-(x_{1}',\crv_{2}(r'))+(x_{1},x_{2})\right).
\end{align*}
 Then from (\ref{eq:=00003DcintcintR}) we have
\begin{align*}
I_{2} & =\int_{s}^{t}\int_{s}^{t}F_{j,j',r'}(r)\dot{\crv}_{2}(r)\int_{0}^{\crv_{1}(r)}\dot{\crv}_{2}(r')\int_{0}^{\crv_{1}(r')}R_{j,j'}'(x_{1},\crv_{2}(r))dx_{1}'\, dx_{1}dr'\, dr\ \\
 & =\int_{s}^{t}\dot{\crv}_{2}(r')\int_{0}^{\crv_{1}(r')}\int_{s}^{t}\dot{\crv}_{2}(r)\int_{0}^{\crv_{1}(r)}F_{j,j',r'}(r)R_{j,j'}'(x_{1},\crv_{2}(r))dx_{1}dr\, dx_{1}'dr'\\
 & =\int_{s}^{t}\dot{\crv}_{2}(r')\int_{0}^{\crv_{1}(r')}\hat{\cE}_{\crv}(F_{j,j',r'},R_{j,j'}')dx_{1}'dr'\\
 & =\int_{s}^{t}\dot{\crv}_{2}(r')\int_{0}^{\crv_{1}(r')}\left\langle R_{j,j'}',\ \cE_{\crv}F_{j,j',r'}\right\rangle dx_{1}'dr'\\
 & =\left\langle R_{j,j'},\ \int_{s}^{t}\dot{\crv}_{2}(r')\int_{0}^{\crv_{1}(r')}\left(\tau_{(x_{1}',\crv_{2}(r'))}\cE_{\crv}F_{j,j',r'}\right)dx_{1}'dr'\right\rangle \ 
\end{align*}
where
\[
\symb{(\tau_{x}f)(y)}:=f(y+x).
\]
Notice the fact that for any function $G\in C(\R^{2})$ with compact
support, $\lim_{j,j'\to\infty}\left\langle R_{j,j'},G\right\rangle =0$
holds. We see that the function
\[
\R^{2}\ni x\mapsto\int_{s}^{t}\dot{\crv}_{2}(r')\int_{0}^{\crv_{1}(r')}\left(\tau_{(x_{1}',\crv_{2}(r'))}\cE_{\crv}F_{j,j',r'}\right)(x)dx_{1}'dr'
\]
is continuous, and its support is compact. Thus we have
\[
\lim_{j,j'\to\infty}I_{2}=0.
\]
\end{proof}
\begin{lem}
\label{thm:lim_{j,j'}I_3=00003D0}\thlab{thm:lim\_\{j,j'\}I\_3=0}Define
$I_{3}$ by (\ref{eq:def:I_3}). Then for any $\crv\in\Crv_{\Rot}$,
$\lim_{j,j'\to\infty}I_{3}=0$.\end{lem}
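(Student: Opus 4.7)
My plan is to reduce $I_{3}$ to a double integral over $[s,t]^{2}$ whose integrand is uniformly bounded and converges pointwise to $0$, and then invoke dominated convergence. Set
\[
G_{j,j'}(r,r'):=\left\langle \bS_{j'}\cE_{\crv}\bOne_{[s,r]},\ \bS_{j,j'}\delta_{\crv,r'}\right\rangle ,
\]
so that $I_{3}=\delta_{kl}\int_{s}^{t}\int_{s}^{t}G_{j,j'}(r,r')\,G_{j,j'}(r',r)\,dr'\,dr$.

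First I will establish a uniform bound $|G_{j,j'}(r,r')|\le C(\crv)$ on $[s,t]^{2}$. Writing $\bS_{j,j'}=\bS_{j'}-\bS_{j}$ and using the self-adjointness of each $\bS_{k}$ yields
\[
G_{j,j'}=\left\langle \bS_{j'}\cE_{\crv}\bOne_{[s,r]},\bS_{j'}\delta_{\crv,r'}\right\rangle -\left\langle \bS_{j}\bS_{j'}\cE_{\crv}\bOne_{[s,r]},\delta_{\crv,r'}\right\rangle .
\]
The first term equals $\langle \bS_{j'}\cE_{\crv}\bOne_{[s,r]},f_{\crv,r'}^{j'}\rangle$, so Lemma \ref{thm:|<S_jcEchi_[s,r1],f_{r2}>|<C} bounds it by a $\crv$-dependent constant. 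For the second, Young's convolution inequality together with Lemma \ref{thm:cE1[s,t]-finiteCombi} gives $\|\bS_{j}\bS_{j'}\cE_{\crv}\bOne_{[s,r]}\|_{L^{\infty}}\le\|\chchi_{0}\|_{L^{1}}^{2}\Rot(\crv)$, and pairing with $\delta_{\crv,r'}$ contributes at most $|\dot{\crv}_{2}(r')|\crv_{1}(r')$.

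Next I will show pointwise convergence $G_{j,j'}(r,r')\to 0$. Introduce $\tilde{G}_{a,b}(r,r'):=\langle \bS_{a}\cE_{\crv}\bOne_{[s,r]},\bS_{b}\delta_{\crv,r'}\rangle$, so that $G_{j,j'}=\tilde{G}_{j',j'}-\tilde{G}_{j',j}$. Using $\cE_{\crv}\bOne_{[s,r]}=\sum_{n}n\bOne_{D_{n}}$ from Lemma \ref{thm:cE1[s,t]-finiteCombi} and $\bS_{b}\delta_{\crv,r'}=\dot{\crv}_{2}(r')\int_{0}^{\crv_{1}(r')}\chchi_{b}(\cdot-(\xi,\crv_{2}(r')))\,d\xi$, I can express $\tilde{G}_{a,b}(r,r')$ as a finite linear combination of
\[
\dot{\crv}_{2}(r')\int_{0}^{\crv_{1}(r')}\langle \bS_{a}\bOne_{D_{n}},\bS_{b}\delta_{(\xi,\crv_{2}(r'))}\rangle\,d\xi.
\]
A direct extension of Lemma \ref{thm:marginal-delta} to independent indices $a,b\to\infty$ (the argument is unchanged, since $\chchi_{a}*\chchi_{b}\to\delta$ in $\scS'$ regardless of how $(a,b)\to(\infty,\infty)$) shows that the integrand converges in $\xi$ to $\bOne_{D_{n}^{\circ}}(\xi,\crv_{2}(r'))+\tfrac{1}{2}\bOne_{\di D_{n}}(\xi,\crv_{2}(r'))$ at every point where $\di D_{n}$ is locally smooth---a limit independent of $(a,b)$. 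Dominated convergence on the $\xi$-integral (using $\|\bS_{a}\bS_{b}\bOne_{D_{n}}\|_{L^{\infty}}\le\|\chchi_{0}\|_{L^{1}}^{2}$) then yields $\tilde{G}_{a,b}(r,r')\to L(r,r')$ for some $L$ that does not depend on the joint mode of convergence, whence $G_{j,j'}(r,r')\to L(r,r')-L(r,r')=0$.

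Combining the two steps, $G_{j,j'}(r,r')G_{j,j'}(r',r)$ is dominated by $C(\crv)^{2}$ on $[s,t]^{2}$ and tends pointwise to $0$; dominated convergence on the double integral finishes the proof. The only nontrivial point is the extension of Lemma \ref{thm:marginal-delta} to two independent smoothing indices, which is the main (and small) obstacle; apart from that, the result is a straightforward consequence of the preceding lemmas and mirrors the structure of the $I_{2}$ argument.
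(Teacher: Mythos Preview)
Your argument is correct and essentially identical to the paper's own proof: both establish the uniform bound on $G_{j,j'}(r,r')$ by splitting $\bS_{j,j'}=\bS_{j'}-\bS_{j}$ and invoking Lemma~\ref{thm:|<S_jcEchi_[s,r1],f_{r2}>|<C}, then obtain pointwise convergence to zero by writing $\cE_{\crv}\bOne_{[s,r]}$ as a finite combination of indicators (Lemma~\ref{thm:cE1[s,t]-finiteCombi}), applying Lemma~\ref{thm:marginal-delta} to each $\langle \bS_{a}\bOne_{D_{n}},\bS_{b}\delta_{(\xi,\crv_{2}(r'))}\rangle$, and concluding by dominated convergence. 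One remark: no ``extension'' of Lemma~\ref{thm:marginal-delta} is needed, since that lemma is already stated for two independent indices $j,j'\to\infty$; the paper uses it exactly as written.
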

\begin{proof}
By Lemma \ref{thm:|<S_jcEchi_[s,r1],f_{r2}>|<C} with $f_{\crv,r}^{j}=\bS_{j}\delta_{\crv,r}$,
there exists $C=C(\crv)>0$ such that for all $j,j'\ge-1$ and $r,r'\in[s,t]$,
\[
\left|\left\langle \bS_{j'}\cE_{\crv}\bOne_{[s,r]},\bS_{j,j'}\delta_{\crv,r'}\right\rangle \right|\le\left|\left\langle \bS_{j'}\cE_{\crv}\bOne_{[s,r]},\ \bS_{j}\delta_{\crv,r'}\right\rangle \right|+\left|\left\langle \bS_{j'}\cE_{\crv}\bOne_{[s,r]},\ \bS_{j'}\delta_{\crv,r'}\right\rangle \right|<C.
\]
By Lemmas \ref{thm:cE1[s,t]-finiteCombi} and \ref{thm:marginal-delta},
we find that for almost all $r,r'\in[s,t]$ and $x_{1},x_{1}'\in\R$,
\begin{align*}
\lim_{j',j\to\infty}\left\langle \bS_{j}\cE_{\crv}\bOne_{[s,r]},\ \bS_{j'}\delta_{(x_{1},\crv_{2}(r'))}\right\rangle  & =\lim_{j'}\left\langle \bS_{j'}\cE_{\crv}\bOne_{[s,r]},\ \bS_{j'}\delta_{(x_{1},\crv_{2}(r'))}\right\rangle \ \\
 & =\left(\cE_{\crv}\bOne_{[s,r]}\right)(x_{1},\crv_{2}(r')),
\end{align*}
and hence
\[
\lim_{j',j\to\infty}\left\langle \bS_{j,j'}\cE_{\crv}\bOne_{[s,r]},\ \bS_{j'}\delta_{(x_{1},\crv_{2}(r'))}\right\rangle =0.
\]
Thus, by $\delta_{\crv,t}=\dot{\crv}_{2}(t)\int_{0}^{\crv_{1}(r)}\delta_{(\xi,\crv_{2}(t))}d\xi$
and the dominated convergence, we have
\begin{align*}
 & \lim_{j,j'\to\infty}I_{3}\\
 & =\lim_{j,j'\to\infty}\delta_{kl}\int_{s}^{t}\int_{s}^{t}\dot{\crv}_{2}(r')\int_{0}^{\crv_{1}(r')}\dot{\crv}_{2}(r)\int_{0}^{\crv_{1}(r)}\left\langle \bS_{j'}\cE_{\crv}\bOne_{[s,r]},\ \bS_{j,j'}\delta_{(x_{1},\crv_{2}(r'))}\right\rangle \ \\
 & \quad\times\left\langle \bS_{j,j'}\delta_{(x_{1},\crv_{2}(r))},\ \bS_{j'}\cE_{\crv}\bOne_{[s,r']}\right\rangle dx_{1}dx_{1}'dr'dr\\
 & =0.
\end{align*}
\end{proof}
\begin{lem}
\label{thm:lim_{j,j'}N_1=00003D0}Define $N_{1}$ by (\ref{eq:def:N_1}).
Then for any $\crv\in\Crv_{\Rot}$, $\lim_{j,j'\to\infty}N_{1}=0$.\end{lem}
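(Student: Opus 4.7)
The plan is to combine the decomposition of $N_1^2$ provided by Lemma \ref{thm:N_1^2=00003DI_1^2+I_2+I_3-jj'} with the three convergence Lemmas \ref{thm:lim_{j,j'}I_1=00003D0}, \ref{thm:lim_{j,j'}I_2=00003D0}, \ref{thm:lim_{j,j'}I_3=00003D0} already established, so the proof will consist of essentially one line.

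More explicitly, I would first invoke Lemma \ref{thm:N_1^2=00003DI_1^2+I_2+I_3-jj'} to write
\[
N_{1}^{2} \;=\; I_{1}^{2} + I_{2} + I_{3},
\]
where $I_1, I_2, I_3$ are defined in (\ref{eq:def:I_1})--(\ref{eq:def:I_3}). Note this decomposition was obtained by applying the four-point Gaussian identity (\ref{eq:ExABCD-gauss}) to the double time integral defining $N_1^2$, relying only on the Gaussianity of $W$ and the fact that $f_{\crv,r}^{j'}-f_{\crv,r}^{j}=\bS_{j,j'}\delta_{\crv,r}$ (via (\ref{eq:def:f_t}) and (\ref{eq:def:delta_{c,t}})).

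Next, I apply the three preceding lemmas. Lemma \ref{thm:lim_{j,j'}I_1=00003D0} gives $\lim_{j,j'\to\infty} I_1 = 0$ (using the marginal-of-delta behavior in Lemma \ref{thm:marginal-delta} together with the dominated convergence theorem and the uniform bound of Lemma \ref{thm:|<S_jcEchi_[s,r1],f_{r2}>|<C}). Lemma \ref{thm:lim_{j,j'}I_2=00003D0} gives $\lim_{j,j'\to\infty} I_2 = 0$ (using that $\langle R_{j,j'}, G\rangle \to 0$ for $G\in C_c(\R^2)$). Lemma \ref{thm:lim_{j,j'}I_3=00003D0} gives $\lim_{j,j'\to\infty} I_3 = 0$ (again by Lemma \ref{thm:marginal-delta} and dominated convergence). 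Therefore
\[
\lim_{j,j'\to\infty} N_1^{2} \;=\; \lim_{j,j'\to\infty} I_{1}^{2} + \lim_{j,j'\to\infty} I_{2} + \lim_{j,j'\to\infty} I_{3} \;=\; 0,
\]
which yields the claim. There is no real obstacle; this lemma is essentially a bookkeeping step that packages together the three convergence results proved immediately above. The only mild point to check is that $I_2$ and $I_3$ (not just $I_1^2$) are themselves nonnegative or at least controllable in sign, but since $N_1^2 \ge 0$ and the three pieces individually tend to $0$, no cancellation issue arises.
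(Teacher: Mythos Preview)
Your proposal is correct and matches the paper's proof exactly: the paper simply states that the result follows from Lemmas \ref{thm:N_1^2=00003DI_1^2+I_2+I_3-jj'}, \ref{thm:lim_{j,j'}I_1=00003D0}, \ref{thm:lim_{j,j'}I_2=00003D0}, and \ref{thm:lim_{j,j'}I_3=00003D0}. Your additional remarks about signs and the mechanics of each $I_i$ are harmless elaboration, but the argument is indeed the one-line bookkeeping step you identify.
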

\begin{proof}
Follows from Lemmas \ref{thm:N_1^2=00003DI_1^2+I_2+I_3-jj'}, \ref{thm:lim_{j,j'}I_1=00003D0},
\ref{thm:lim_{j,j'}I_2=00003D0}, and \ref{thm:lim_{j,j'}I_3=00003D0}.\end{proof}
\begin{lem}
\label{thm:N_2^2=00003DJ_1^2+J_2+J_3}For any $\crv\in\Crv$,
\[
N_{2}^{2}=J_{1}^{2}+J_{2}+J_{3}
\]
where
\begin{align*}
 & \symb{J_{1}}:=\delta_{kl}\int_{s}^{t}\left\langle \bS_{j}\delta_{\crv,r},\ \bS_{j,j'}\cE_{\crv}\bOne_{[s,r]}\right\rangle dr,\\
 & \symb{J_{2}}:=\int_{s}^{t}\int_{s}^{t}\left\langle \bS_{j}\delta_{\crv,r},\ \bS_{j}\delta_{\crv,r'}\right\rangle \left\langle \bS_{j,j'}\cE_{\crv}\bOne_{[s,r]},\ \bS_{j,j'}\cE_{\crv}\bOne_{[s,r']}\right\rangle dr'dr\\
 & \symb{J_{3}}:=\delta_{kl}\int_{s}^{t}\int_{s}^{t}\left\langle \bS_{j}\delta_{\crv,r},\ \bS_{j,j'}\cE_{\crv}\bOne_{[s,r']}\right\rangle \left\langle \bS_{j,j'}\cE_{\crv}\bOne_{[s,r]},\ \bS_{j}\delta_{\crv,r'}\right\rangle dr'dr
\end{align*}
\end{lem}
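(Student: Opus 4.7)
The proof is an exact parallel of Lemma \ref{thm:N_1^2=00003DI_1^2+I_2+I_3-jj'}, with the roles of the $\cE_{\crv}\bOne_{[s,r]}$-factor and the $\delta_{\crv,r}$-factor swapped (and with the cutoffs $\bS_{j'}$ and $\bS_{j,j'}$ distributed accordingly). The plan is to first rewrite $N_{2}^{2}$ as
\[
N_{2}^{2}=\Ex\biggl[\Bigl(\int_{s}^{t}\left\langle W,\bS_{j}\delta_{\crv,r}\right\rangle ^{l}\left\langle W,\bS_{j,j'}\cE_{\crv}\bOne_{[s,r]}\right\rangle ^{k}dr\Bigr)^{2}\biggr]
\]
using that $f_{\crv,r}^{j}=\bS_{j}\delta_{\crv,r}$ and $\bS_{j'}-\bS_{j}=\bS_{j,j'}$, then use Fubini to turn the square into a double integral over $(r,r')\in[s,t]^{2}$ of a four-fold Gaussian moment.

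Next, I would apply the identity (\ref{eq:ExABCD-gauss}) for Gaussian random variables $A,B,C,D$ with
\[
A=\left\langle W,\bS_{j}\delta_{\crv,r}\right\rangle ^{l},\ B=\left\langle W,\bS_{j,j'}\cE_{\crv}\bOne_{[s,r]}\right\rangle ^{k},\ C=\left\langle W,\bS_{j}\delta_{\crv,r'}\right\rangle ^{l},\ D=\left\langle W,\bS_{j,j'}\cE_{\crv}\bOne_{[s,r']}\right\rangle ^{k},
\]
and evaluate each pairwise expectation through the covariance
\[
\Ex\bigl[\left\langle W,f\right\rangle ^{k}\left\langle W,g\right\rangle ^{l}\bigr]=\delta_{kl}\langle f,g\rangle_{L^{2}(\R^{2})}.
\]
The pairing $\Ex[AB]\Ex[CD]$ produces a product of two equal single integrals (the $r$- and $r'$-integrations decouple), which by $\delta_{kl}^{2}=\delta_{kl}$ is precisely $J_{1}^{2}$. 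The pairing $\Ex[AC]\Ex[BD]$ contains no $\delta_{kl}$ (since both factors pair the same component with itself) and gives exactly $J_{2}$. The remaining cross-pairing $\Ex[AD]\Ex[BC]$ produces the two mixed inner products $\langle\bS_{j}\delta_{\crv,r},\bS_{j,j'}\cE_{\crv}\bOne_{[s,r']}\rangle$ and $\langle\bS_{j,j'}\cE_{\crv}\bOne_{[s,r]},\bS_{j}\delta_{\crv,r'}\rangle$, together with a factor $\delta_{kl}$, which is $J_{3}$.

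No technical obstacle is anticipated: the computation is purely algebraic, a direct application of the Isserlis--Wick formula already used in Lemma \ref{thm:N_1^2=00003DI_1^2+I_2+I_3-jj'}. The only care needed is bookkeeping of the indices $k,l$ (to ensure the correct $\delta_{kl}$ appears in $J_{1}$ and $J_{3}$ but not in $J_{2}$) and the appearance of $\bS_{j}$ versus $\bS_{j,j'}$ in the correct slots after expanding $f_{\crv,r}^{j'}-f_{\crv,r}^{j}=\bS_{j,j'}\delta_{\crv,r}$-type identities mirrored on the $\cE_{\crv}$-side.
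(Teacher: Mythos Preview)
Your proposal is correct and is exactly the approach the paper takes: the paper's proof reads ``By a straightforward calculation,'' and the suppressed calculation consists precisely of rewriting $N_{2}$ via $f_{\crv,r}^{j}=\bS_{j}\delta_{\crv,r}$ and $\bS_{j'}-\bS_{j}=\bS_{j,j'}$, squaring, applying Fubini, and expanding the resulting four-point Gaussian moment with (\ref{eq:ExABCD-gauss}) to obtain $J_{1}^{2}+J_{2}+J_{3}$.
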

\begin{proof}
By a straightforward calculation.%
\end{proof}
\begin{lem}
\label{thm:|int_s^tdr|S_jdelta|<|}For any $\crv\in\Crv_{\infty}$,
there exists $C=C(\crv)$ such that for all $j$ and $0\le s<t\le1$,

\[
\left\Vert \int_{s}^{t}\left|\bS_{j}\delta_{\crv,r}\right|dr\right\Vert _{L^{2}(\R^{2})}\le C.
\]
\end{lem}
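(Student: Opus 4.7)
The naive approach via Minkowski's integral inequality,
\[
\Bigl\Vert\int_s^t |\bS_j \delta_{\crv,r}| dr \Bigr\Vert_{L^2(\R^2)} \le \int_s^t \|\bS_j \delta_{\crv,r}\|_{L^2(\R^2)} dr,
\]
must fail, because $\bS_j \delta_{\crv,r}$ is essentially a tube of length $\crv_1(r)$, width $2^{-j}$ and height $\sim 2^j$, so $\|\bS_j \delta_{\crv,r}\|_{L^2} \simeq 2^{j/2}$, which blows up. The point is that one cannot commute the absolute value past the $r$-integral; one must retain the smoothing effect obtained by integrating along the curve. Instead, I will expand the $L^2$ norm squared and reuse the template of the proof of Proposition \ref{thm:int_Rdr|<f1,f2>|<}.

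First, by Fubini,
\[
\Bigl\Vert \int_s^t |f_{\crv,r}^j| dr \Bigr\Vert_{L^2(\R^2)}^2 = \int_s^t\!\!\int_s^t \bigl\langle |f_{\crv,r_1}^j|,\, |f_{\crv,r_2}^j|\bigr\rangle\, dr_1 dr_2.
\]
From the explicit formula for $f_{\crv,r}^j = \bS_j \delta_{\crv,r}$ one has the pointwise bound
\[
|f_{\crv,r}^j(x)| \le \bar f_{\crv,r}^j(x) := |\dot{\crv}_2(r)| \int_0^{\crv_1(r)} \bigl|\chchi_j\bigl(x - (\xi,\crv_2(r))\bigr)\bigr|\, d\xi.
\]
Expanding $\langle \bar f_{\crv,r_1}^j, \bar f_{\crv,r_2}^j\rangle$ as in (\ref{eq:<f_s,f_t>}), one obtains the analogous formula with $\kappa_j$ replaced by $\bar\kappa_j(y) := \int |\chchi_j(x)||\chchi_j(x-y)|\, dx$, which satisfies $\|\bar\kappa_j\|_{L^1} \le \|\chchi_0\|_{L^1}^2$ by exactly the same calculation as in Lemma \ref{thm:kappa-L1}.

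Next, following the proof of Proposition \ref{thm:int_Rdr|<f1,f2>|<} verbatim with $\bar\kappa_j$ in place of $\kappa_j$, define
\[
\bar H_{\crv,j,r_1}(x_1,x_2) := \int_0^{\crv_1(r_1)} \bar\kappa_j\bigl((\xi,\crv_2(r_1)) - (x_1,x_2)\bigr) d\xi,
\]
for which $\|\bar H_{\crv,j,r_1}\|_{L^1(\R^2)} \le \crv_1(r_1)\|\chchi_0\|_{L^1}^2$. With $s_\crv$ as in that proof, Fubini gives
\[
\int_s^t \bigl\langle \bar f_{\crv,r_1}^j,\, \bar f_{\crv,r_2}^j\bigr\rangle\, dr_2
= |\dot{\crv}_2(r_1)|\,\hat{\cE}_\crv\bigl(\bar H_{\crv,j,r_1},\; s_\crv \bOne_{[s,t]}\bigr)
= |\dot{\crv}_2(r_1)|\,\bigl\langle \bar H_{\crv,j,r_1},\, \cE_\crv(s_\crv \bOne_{[s,t]}) \bigr\rangle,
\]
and hence, using H\"older and the defining property of $\Crv_\infty$,
\[
\int_s^t \bigl\langle \bar f_{\crv,r_1}^j,\, \bar f_{\crv,r_2}^j\bigr\rangle\, dr_2
\le |\dot{\crv}_2(r_1)|\,\crv_1(r_1)\,\|\chchi_0\|_{L^1}^2\,\|\cE_\crv\|_{\infty\infty}.
\]

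Finally, integrating in $r_1 \in [s,t] \subset [0,1]$ yields
\[
\Bigl\Vert \int_s^t |f_{\crv,r}^j|\, dr \Bigr\Vert_{L^2(\R^2)}^2
\le \|\dot{\crv}_2\|_{L^\infty}\|\crv_1\|_{L^\infty}\|\chchi_0\|_{L^1}^2\|\cE_\crv\|_{\infty\infty},
\]
which is a constant $C(\crv)$ independent of $j$ and of $0\le s<t\le 1$. The only real obstacle is the one flagged in the first paragraph: recognizing that one must expand the square and exploit the $\cE_\crv$-boundedness on $L^\infty$, rather than bounding each $\|\bS_j\delta_{\crv,r}\|_{L^2}$ separately; once this is done, the computation is a near-verbatim rerun of Proposition \ref{thm:int_Rdr|<f1,f2>|<}.
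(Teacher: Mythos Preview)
Your proof is correct, but the paper takes a shorter route. Rather than expanding the $L^2$ norm squared and rerunning Proposition~\ref{thm:int_Rdr|<f1,f2>|<} with the auxiliary kernel $\bar\kappa_j$, the paper bounds the single $r$-integral \emph{pointwise} in $y$: writing $|(\bS_j\delta_{\crv,r})(y)|\le|\dot\crv_2(r)|\int_0^{\crv_1(r)}|\chchi_j(y-(x_1,\crv_2(r)))|\,dx_1$ and recognising the $r$-integral as $\hat\cE_\crv(|\chchi_j(y-\cdot)|,\,s_\crv\bOne_{[s,t]})=(|\chchi_j|*\cE_\crv(s_\crv\bOne_{[s,t]}))(y)$, it then takes $L^2$ in $y$ and applies Young's inequality with $\|\chchi_j\|_{L^1}=\|\chchi_0\|_{L^1}$ and $\|\cE_\crv(s_\crv\bOne_{[s,t]})\|_{L^2}\le C(\crv)$. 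Both arguments rest on the same two inputs---the $L^1$-scale-invariance of $\chchi_j$ and the $L^\infty\!\to\!L^\infty$ boundedness of $\cE_\crv$ for $\crv\in\Crv_\infty$---but the paper's version avoids the double integral and the second kernel entirely. Your route has the compensating virtue of making the reduction to Proposition~\ref{thm:int_Rdr|<f1,f2>|<} explicit, so the two proofs are morally the same computation organised differently.
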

\begin{proof}
Let
\begin{align*}
 & \symb{H_{j,y}(x)}:=\left|\left(\bS_{j}\delta_{x}\right)(y)\right|=\left|\left(\bS_{j}\delta\right)(y-x)\right|,\quad x,y\in\R^{2}.\\
 & \symb{s_{[s,t]}(r)}:=\sgn(\dot{\crv}_{2}(r))\bOne_{[s,t]}(r)
\end{align*}
Then we have
\begin{align*}
\int_{s}^{t}\left|\left(\bS_{j}\delta_{\crv,r}\right)(y)\right|dr & \le\int_{s}^{t}\left|\dot{\crv}_{2}(r)\right|\int_{0}^{\crv_{1}(r)}\left|\left(\bS_{j}\delta_{(x_{1},\crv_{2}(r))}\right)(y)\right|dx_{1}dr\ \\
 & =\int_{0}^{1}\dot{\crv}_{2}(r)\int_{0}^{\crv_{1}(r)}\left|\left(\bS_{j}\delta_{(x_{1},\crv_{2}(r))}\right)(y)\right|\sgn(\dot{\crv}_{2}(r))\bOne_{[s,t]}(r)dx_{1}dr\\
 & =\hat{\cE}_{\crv}(H_{j,y},\ s_{[s,t]})=\left\langle H_{j,y},\ \cE_{\crv}s_{[s,t]}\right\rangle =\left(H_{j,0}*\cE_{\crv}s_{[s,t]}\right)(y)
\end{align*}
On the other hand we find
\begin{align*}
\left\Vert \cE_{\crv}s_{[s,t]}\right\Vert _{L^{2}} & \le\Leb\left(\supp\cE_{\crv}s_{[s,t]}\right)^{1/2}\left\Vert \cE_{\crv}s_{[s,t]}\right\Vert _{L^{\infty}}\\
 & \le\Leb\left(\supp\cE_{\crv}s_{[s,t]}\right)^{1/2}\left\Vert \cE_{\crv}\right\Vert _{\infty\infty}\le C_{1}(\crv).
\end{align*}
Thus
\begin{align*}
\left\Vert \int_{s}^{t}dr\left|\bS_{j}\delta_{\crv,r}\right|\right\Vert _{L^{2}} & \le\left\Vert H_{j,0}*\cE_{\crv}s_{[s,t]}\right\Vert _{L^{2}}\\
 & =\left\Vert \left(\bS_{j}\delta\right)*\cE_{\crv}s_{[s,t]}\right\Vert _{L^{2}}\\
 & \le\left\Vert \bS_{j}\delta\right\Vert _{L^{1}}\left\Vert \cE_{\crv}s_{[s,t]}\right\Vert _{L^{2}}\\
 & =\left\Vert \bS_{0}\delta\right\Vert _{L^{1}}\left\Vert \cE_{\crv}s_{[s,t]}\right\Vert _{L^{2}}\ \\
 & \le\left\Vert \bS_{0}\delta\right\Vert _{L^{1}}C_{1}(\crv)\le C_{2}(\crv).
\end{align*}
\end{proof}
\begin{lem}
\label{thm:lim_{jj'}N_2=00003D0}Define $N_{2}$ by (\ref{eq:def:N_2}).
Then for any $\crv\in\Crv_{\infty}$, $\lim_{j,j'\to\infty}N_{2}=0$.\end{lem}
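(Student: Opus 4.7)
The plan is to use Lemma~\ref{thm:N_2^2=00003DJ_1^2+J_2+J_3} to decompose $N_2^2=J_1^2+J_2+J_3$ and show that each of $J_1$, $J_2$, $J_3$ tends to $0$ as $j,j'\to\infty$. The common mechanism is that $\bS_{j,j'}=\bS_{j'}-\bS_{j}$ is small: in an $L^2$ sense (for $J_2$) coming from the Besov regularity of $\cE_{\crv}\bOne_{[s,r]}$, and in a pointwise sense via Lemma~\ref{thm:marginal-delta} (for $J_1$ and $J_3$) after decomposing $\delta_{\crv,r}$ along the horizontal line through $\crv(r)$.

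For $J_2$, I would apply Cauchy--Schwarz to obtain
\[
\bigl|\bigl\langle \bS_{j,j'}\cE_{\crv}\bOne_{[s,r]},\ \bS_{j,j'}\cE_{\crv}\bOne_{[s,r']}\bigr\rangle\bigr|\ \le\ \bigl\Vert \bS_{j,j'}\cE_{\crv}\bOne_{[s,r]}\bigr\Vert_{L^2}\,\bigl\Vert \bS_{j,j'}\cE_{\crv}\bOne_{[s,r']}\bigr\Vert_{L^2}.
\]
A uniform Besov bound $\sup_{r\in[0,1]}\|\cE_{\crv}\bOne_{[s,r]}\|_{B_{2,\infty}^{\besovorder}}\le C(\crv)$ for some $\besovorder\in(0,1/2)$, obtained by splitting $[s,r]$ into finitely many short subintervals on which Lemma~\ref{thm:|cE.chi[s,t]|_{B_{2,infty}}<} applies and summing (using linearity of $\cE_{\crv}$ in its second argument), yields $\sup_r\|\bS_{j,j'}\cE_{\crv}\bOne_{[s,r]}\|_{L^2}\lesssim 2^{-\besovorder j}\to 0$. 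The remaining double integral $\int_s^t\!\int_s^t|\langle f_{\crv,r}^{j},f_{\crv,r'}^{j}\rangle|\,dr'\,dr\le(t-s)C$ is uniformly controlled in $j$ by Proposition~\ref{thm:int_Rdr|<f1,f2>|<}, and the product tends to~$0$.

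For $J_1$, I would split $\bS_{j,j'}=\bS_{j'}-\bS_{j}$ and write $J_1=\delta_{kl}(A_{j,j'}-B_j)$ with $A_{j,j'}:=\int_s^t\langle \bS_j\delta_{\crv,r},\bS_{j'}\cE_{\crv}\bOne_{[s,r]}\rangle\,dr$ and $B_j:=\int_s^t\langle \bS_j\delta_{\crv,r},\bS_j\cE_{\crv}\bOne_{[s,r]}\rangle\,dr$. Substituting $\delta_{\crv,r}=\dot\crv_2(r)\int_0^{\crv_1(r)}\delta_{(\xi,\crv_2(r))}\,d\xi$ and applying Lemma~\ref{thm:marginal-delta} inside the $\xi$-integral (with bounded convergence supplied by Lemma~\ref{thm:|<S_jcEchi_[s,r1],f_{r2}>|<C}), both $A_{j,j'}$ and $B_j$ converge to the same limit $\tfrac{1}{2}\int_s^t\dot\crv_2(r)\crv_1(r)\,dr$ as the indices go to infinity---by the same computation as in the proof of Lemma~\ref{thm:lim_{j,j'}I_1=00003D0}---so $J_1\to 0$. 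For $J_3$, I would follow the proof of Lemma~\ref{thm:lim_{j,j'}I_3=00003D0}: decomposing both $\delta_{\crv,r}$ and $\delta_{\crv,r'}$ along horizontal lines, for a.e.\ $(r,r',\xi)$ the point $(\xi,\crv_2(r))$ lies in the interior of a level set of $\cE_{\crv}\bOne_{[s,r']}$ (and analogously for the other factor), so by Lemma~\ref{thm:marginal-delta} the limits of $\langle \bS_j\delta,\bS_{j'}\cE\rangle$ and $\langle \bS_j\delta,\bS_j\cE\rangle$ coincide and their $\bS_{j,j'}$-difference vanishes pointwise, while a uniform bound of the form of Lemma~\ref{thm:|<S_jcEchi_[s,r1],f_{r2}>|<C} provides the dominating estimate needed to invoke dominated convergence. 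The main obstacle I expect is establishing the uniform Besov estimate underlying $J_2$; once that is in hand, $J_1$ and $J_3$ follow by the same template that handles $I_1$ and $I_3$ in the treatment of $N_1$.
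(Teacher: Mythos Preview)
Your approach is essentially identical to the paper's: the same decomposition $N_2^2=J_1^2+J_2+J_3$, with $J_1$ and $J_3$ handled by the same mechanism as $I_1$ and $I_3$, and $J_2$ handled via the uniform Besov bound $\sup_r\|\cE_\crv\bOne_{[s,r]}\|_{B_{2,\infty}^\besovorder}<\infty$ from Lemma~\ref{thm:|cE.chi[s,t]|_{B_{2,infty}}<} to get $\|\bS_{j,j'}\cE_\crv\bOne_{[s,r]}\|_{L^2}\lesssim 2^{-\besovorder j}$. The only difference is cosmetic: to bound the leftover double integral $\int_s^t\!\int_s^t|\langle \bS_j\delta_{\crv,r},\bS_j\delta_{\crv,r'}\rangle|\,dr'\,dr$ in $J_2$, you invoke Proposition~\ref{thm:int_Rdr|<f1,f2>|<}, whereas the paper invokes Lemma~\ref{thm:|int_s^tdr|S_jdelta|<|} (after the pointwise inequality $|\langle f,g\rangle|\le\langle |f|,|g|\rangle$); both yield a uniform-in-$j$ constant, and both require $\crv\in\Crv_\infty$. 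Your anticipated ``main obstacle'' of establishing the uniform Besov bound is not a real obstacle: the first assertion of Lemma~\ref{thm:|cE.chi[s,t]|_{B_{2,infty}}<} (membership in $B_{2,\infty}^\besovorder$) already gives finiteness for each $r$, and the splitting argument you describe (or a direct inspection of the constants in its proof) gives the uniformity over $r\in[s,t]$.
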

\begin{proof}
By Lemma (\ref{thm:N_2^2=00003DJ_1^2+J_2+J_3}), it suffices to show
that

\[
\lim_{j,j'}J_{i}=0,\quad i=1,2,3,
\]
The proof of $\lim_{j,j'}J_{1}=0$ is similar to that of $\lim_{j,j'}I_{1}=0$.
The proof of $\lim_{j,j'}J_{3}=0$ is similar to $\lim_{j,j'}I_{3}=0$.
We will show $\lim_{j,j'\to\infty}J_{2}=0$. By Lemmas \ref{thm:B2infty^{1/2}(R^2)cond-leng}
and \ref{thm:|cE.chi[s,t]|_{B_{2,infty}}<}, for any $\besovorder\in(0,1/2]$
we have 

\[
\symb{\cN_{\crv,\besovorder}}:=\sup_{r\in[s,t]}\left\Vert \cE_{\crv}\bOne_{[s,r]}\right\Vert _{B_{2,\infty}^{\besovorder}(\R^{2})}<\infty
\]
Thus we have

\[
\left\Vert \bDelta_{j}\cE_{\crv}\bOne_{[s,r]}\right\Vert _{L^{2}(\R^{2})}\le\cN_{\crv,\besovorder}2^{-\besovorder j}
\]
and hence we find that if $j\le j'$,
\begin{align*}
 & \left\Vert \bS_{j,j'}\cE_{\crv}\bOne_{[s,r]}\right\Vert _{L^{2}(\R^{2})}=\Bigl\Vert\sum_{i=j}^{j'-1}\bDelta_{i}\cE_{\crv}\bOne_{[s,r]}\Bigr\Vert_{L^{2}(\R^{2})}\ \\
 & \le\sum_{i=j}^{j'-1}\left\Vert \bDelta_{i}\cE_{\crv}\bOne_{[s,r]}\right\Vert _{L^{2}(\R^{2})}\le\frac{\cN_{\crv,\besovorder}}{1-2^{-\besovorder}}2^{-\besovorder j}
\end{align*}
and so
\begin{align*}
\left|\left\langle \bS_{j,j'}\cE_{\crv}\bOne_{[s,r]},\bS_{j,j'}\cE_{\crv}\bOne_{[s,r']}\right\rangle \right| & \le\left\Vert \bS_{j,j'}\cE_{\crv}\bOne_{[s,r]}\right\Vert _{L^{2}(\R^{2})}\left\Vert \bS_{j,j'}\cE_{\crv}\bOne_{[s,r']}\right\Vert _{L^{2}(\R^{2})}\\
 & \le C2^{-2\besovorder j}.\ \ 
\end{align*}
Thus we have
\begin{align*}
\left|J_{2}\right| & =\left|\int_{s}^{t}\int_{s}^{t}\left\langle \bS_{j}\delta_{\crv,r},\bS_{j}\delta_{\crv,r'}\right\rangle \left\langle \bS_{j,j'}\cE_{\crv}\bOne_{[s,r]},\bS_{j,j'}\cE_{\crv}\bOne_{[s,r']}\right\rangle dr'dr\right|\\
 & \le\int_{s}^{t}\int_{s}^{t}\left|\left\langle \bS_{j}\delta_{\crv,r},\bS_{j}\delta_{\crv,r'}\right\rangle \left\langle \bS_{j,j'}\cE_{\crv}\bOne_{[s,r]},\bS_{j,j'}\cE_{\crv}\bOne_{[s,r']}\right\rangle \right|dr'dr\ \\
 & \le C2^{-2\besovorder j}\int_{s}^{t}\int_{s}^{t}\left|\left\langle \bS_{j}\delta_{\crv,r},\bS_{j}\delta_{\crv,r'}\right\rangle \right|dr'dr\\
 & =C2^{-2\besovorder j}\int_{s}^{t}\int_{s}^{t}\left\langle \left|\bS_{j}\delta_{\crv,r}\right|,\left|\bS_{j}\delta_{\crv,r'}\right|\right\rangle dr'dr\\
 & =C2^{-2\besovorder j}\left\Vert \int_{s}^{t}\left|\bS_{j}\delta_{\crv,r}\right|dr\right\Vert _{L^{2}(\R^{2})}^{2}\\
 & \le C_{2}2^{-2\besovorder j}\ 
\end{align*}
where the last inequality is by Lemma \ref{thm:|int_s^tdr|S_jdelta|<|}.
Thus we have shown $\lim_{j,j'\to\infty}J_{2}=0$. This completes
the proof.
\end{proof}

\section{Rough path convergence}
\begin{lem}
[Uniform rough path bounds in $L^p$] \label{thm:roughPathBoundsInLp}Let
$\crv\in\Crv_{\infty}$, $q\in[1,\infty)$ and $\al\in(1/3,1/2)$.
Then

\[
\sup_{j}\left\Vert d_{{\rm CC};\al\hHol:[0,1]}\bigl(\bX_{\crv}^{(j)},o\bigr)\right\Vert _{L^{q}(\Prob)}<\infty.
\]
\end{lem}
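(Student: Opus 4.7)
The strategy is to combine the $L^q$-bounds already established for the first and second levels of $\bX_\crv^{(j)}$ with the Kolmogorov-type criterion for $G^{(2)}$-valued processes (Theorem \ref{thm:G2process-Lq-bound}).

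First I would produce a uniform (in $j$) pointwise $L^q$-bound on the Carnot--Caratheodory distance: using the comparison
\[
d_{{\rm CC}}(\bX_{\crv;s,t}^{(j)},o)\simeq \bigl|X_{s,t}^{(j)}\bigr|+\bigl|\bbX_{\crv;s,t}^{(j)}\bigr|^{1/2},
\]
the triangle inequality in $L^q(\Prob)$, and the identity $\bigl\Vert|Y|^{1/2}\bigr\Vert_{L^q}=\|Y\|_{L^{q/2}}^{1/2}$, I obtain
\[
\bigl\Vert d_{{\rm CC}}(\bX_{\crv;s,t}^{(j)},o)\bigr\Vert_{L^q(\Prob)}\lesssim \bigl\Vert X_{s,t}^{(j)}\bigr\Vert_{L^q(\Prob)}+\bigl\Vert\bbX_{\crv;s,t}^{(j)}\bigr\Vert_{L^{q/2}(\Prob,\fg\otimes\fg)}^{1/2}.
\]
Proposition \ref{thm:X^(j)-L^p-bdd} bounds the first term by $C(\crv,q)(t-s)^{1/2-\besovorder}$ for any $\besovorder\in(0,1/2]$, and Proposition \ref{thm:Ex[|bbX|^2]<} bounds the second term by $C(\crv,q)(t-s)^{1/2}$. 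Hence for every $q\in[1,\infty)$ and every $\besovorder\in(0,1/2)$,
\[
\sup_{j\ge-1}\bigl\Vert d_{{\rm CC}}(\bX_{\crv;s,t}^{(j)},o)\bigr\Vert_{L^q(\Prob)}\le C(\crv,q,\besovorder)\,(t-s)^{1/2-\besovorder}.
\]

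Next I would fix $\al\in(1/3,1/2)$ and choose $\besovorder\in(0,1/2-\al)$, so that setting $\ordera:=1/2-\besovorder$ and $\orderb:=\al$ we have $0\le\orderb<\ordera$. Theorem \ref{thm:G2process-Lq-bound} then produces a threshold $q_0=q_0(\ordera,\orderb)$ such that, for every $q\ge q_0$ and every $j\ge-1$,
\[
\bigl\Vert d_{{\rm CC};\al\hHol;[0,1]}\bigl(\bX_\crv^{(j)},o\bigr)\bigr\Vert_{L^q(\Prob)}\le C(\crv,q,\al)<\infty,
\]
with the right-hand side independent of $j$ (since the pointwise bound above is uniform in $j$). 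Here I use left-invariance of $d_{{\rm CC}}$, which gives $d_{{\rm CC}}(\bX_s^{(j)},\bX_t^{(j)})=d_{{\rm CC}}(o,\bX_{\crv;s,t}^{(j)})$, so the hypothesis of Theorem \ref{thm:G2process-Lq-bound} is met.

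Finally, for $q<q_0$ I would invoke monotonicity of $L^q$-norms on the finite probability space $(\samples,\Prob)$: $\|\cdot\|_{L^q(\Prob)}\le\|\cdot\|_{L^{q_0}(\Prob)}$, transferring the bound from $L^{q_0}(\Prob)$ to $L^q(\Prob)$. This yields the claimed uniform bound for every $q\in[1,\infty)$. There is no essential obstacle here: the real work has already been done in Propositions \ref{thm:X^(j)-L^p-bdd} and \ref{thm:Ex[|bbX|^2]<}; the only point requiring attention is the choice $\besovorder<1/2-\al$ so that the Kolmogorov hypothesis $\ordera>\orderb$ is satisfied.
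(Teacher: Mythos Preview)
Your proposal is correct and follows essentially the same approach as the paper: bound $\|d_{{\rm CC}}(\bX_{s}^{(j)},\bX_{t}^{(j)})\|_{L^q}$ via Propositions \ref{thm:X^(j)-L^p-bdd} and \ref{thm:Ex[|bbX|^2]<}, then feed this into Theorem \ref{thm:G2process-Lq-bound}. The only cosmetic differences are that the paper invokes Chen's relation explicitly to identify $\bbX_{s,t}^{(j)}$ with the second-level increment (where you invoke left-invariance of $d_{{\rm CC}}$), and the paper does not spell out the $q<q_0$ reduction via monotonicity of $L^q$-norms on a probability space, which you do.
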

\begin{proof}
Notice that $d_{{\rm CC}}(\bX_{s}^{(j)},\bX_{t}^{(j)})\simeq\bigl|X_{t}^{(j)}-X_{s}^{(j)}\bigr|+\bigl|\bbX_{t}^{(j)}-\bbX_{s}^{(j)}-X_{s}^{(j)}\otimes(X_{t}^{(j)}-X_{s}^{(j)})\bigr|^{1/2}$.
Because $\left(1,X^{(j)},\bbX^{(j)}\right)=\Sn(X^{(j)})$ and $X_{0}^{(j)}=0$,
it follows from Chen's relation (Theorem \ref{thm:Chen's-relation})
that $\bbX_{s,t}^{(j)}=\bbX_{t}^{(j)}-\bbX_{s}^{(j)}-X_{s}^{(j)}\otimes\left(X_{t}^{(j)}-X_{s}^{(j)}\right)$.
Thus we see $d_{{\rm CC}}(\bX_{s}^{(j)},\bX_{t}^{(j)})\simeq\left|X_{s,t}^{(j)}\right|+\left|\bbX_{s,t}^{(j)}\right|^{1/2}$,
and hence

\begin{align*}
\left\Vert d_{{\rm CC}}(\bX_{s}^{(j)},\bX_{t}^{(j)})\right\Vert _{L^{q}(\Prob)} & \lesssim\left\Vert \bigl|X_{s,t}^{(j)}\bigr|\right\Vert _{L^{q}(\Prob)}+\left\Vert \bigl|\bbX_{s,t}^{(j)}\bigr|^{1/2}\right\Vert _{L^{q}(\Prob)}\ \\
 & =\left\Vert X_{s,t}^{(j)}\right\Vert _{L^{q}(\Prob,\fg)}+\left\Vert \bbX_{s,t}^{(j)}\right\Vert _{L^{q/2}(\Prob,\fg)}^{1/2}
\end{align*}
By Prop. \ref{thm:X^(j)-L^p-bdd} and Prop. \ref{thm:Ex[|bbX|^2]<},
we have for all $j\ge-1,\ \beta\in(0,1/2)$ and $\ q\in[1,\infty)$,
\[
\bigl\Vert X_{s,t}^{(j)}\bigr\Vert_{L^{q}(\Prob,\fg)}\le C_{1}\left|t-s\right|^{\beta},\quad\bigl\Vert\bbX_{s,t}^{(j)}\bigr\Vert_{L^{q}(\Prob,\fg)}\le C_{2}\left|t-s\right|^{2\beta},\quad\ 
\]
Hence there exists $C_{3}$ such that 
\[
\left\Vert d_{{\rm CC}}(\bX_{s}^{(j)},\bX_{t}^{(j)})\right\Vert _{L^{q}(\Prob)}\le C_{3}\left|t-s\right|^{\beta}\quad\forall j\ge-1,\ \beta\in(0,1/2),\ q\in[1,\infty)\ 
\]
For $0\le\orderb<\ordera$, let $C(\ordera,\orderb,T)$ be of Theorem
\ref{thm:G2process-Lq-bound} with $M=C_{3}$. Then we see 

\[
\left\Vert d_{{\rm CC};\al\hHol:[0,T]}\left(\bX^{(j)},o\right)\right\Vert _{L^{q}(\Prob)}\le C(\beta,\al,1)C_{3},\quad\forall j\ge-1,\ \al\in(0,\beta).
\]
This completes the proof.
\end{proof}
{}
\begin{lem}
[pointwise $L^{p}$ convergence] \label{thm:pointwise-Lp-convergence}For
each $p\in[1,\infty)$ and $0\le s<t\le1$, $\bX_{s,t}^{(j)}=\bigl(1,X_{s,t}^{(j)},\bbX_{s,t}^{(j)}\bigr)$
converges to an element $\symb{\bX_{s,t}}=\bigl(1,X_{s,t},\bbX_{s,t}\bigr)$
in $L^{p}$, that is, 

\[
\lim_{j}\bigl\Vert X_{s,t}-X_{s,t}^{(j)}\bigr\Vert_{L^{p}(\Prob,\fg)}=\lim_{j}\bigl\Vert\bbX_{s,t}-\bbX_{s,t}^{(j)}\bigr\Vert_{L^{p}(\Prob,\fg\otimes\fg)}=0
\]
hold. Equivalently, 

\[
\lim_{j\to\infty}\bigl\Vert d_{{\rm CC}}(\bX_{s,t}^{(j)},\bX_{s,t})\bigr\Vert_{L^{q}(\Prob)}=0.
\]
\end{lem}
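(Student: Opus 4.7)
The plan is to combine the two convergence results already established and then transfer them to the $d_{{\rm CC}}$-formulation. Define $X_{s,t}$ by Eq.~(\ref{eq:def:X}), so the difference $X_{s,t}^{(j)}-X_{s,t}=\left\langle W,(\bS_{j}-I)\cE_{\crv}\bOne_{[s,t]}\right\rangle$ is a Gaussian random variable lying in the first Wiener chaos of $W$. Proposition~\ref{thm:X^(j)-L^p-converge} gives convergence to $0$ in $L^{2}(\Prob,\fg)$, and by the hypercontractive equivalence of $L^{p}$-norms on a fixed Wiener chaos (\cite[Theorem~3.50]{Jan97}, as invoked already in the proof of Proposition~\ref{thm:X^(j)-L^p-bdd}), this upgrades automatically to $L^{p}(\Prob,\fg)$ convergence for every $p\in[1,\infty)$.

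For the iterated-integral component, Proposition~\ref{thm:lim_{jj'}|bbX-bbX|=00003D0} asserts that $\bigl(\bbX_{s,t}^{(j)}\bigr)_{j\ge-1}$ is Cauchy in $L^{p}(\Prob,\fg\otimes\fg)$ for every $p\in[1,\infty)$. By completeness I take its $L^{p}$-limit and denote it $\bbX_{s,t}$; the limit is independent of $p$, since $L^{p_{2}}$-convergence for $p_{2}\ge p_{1}$ implies $L^{p_{1}}$-convergence and an a.s.\ subsequence identifies a common pointwise limit. Set $\bX_{s,t}:=(1,X_{s,t},\bbX_{s,t})$.

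Finally, I transfer to the $d_{{\rm CC}}$-formulation. Using the asymptotic $d_{{\rm CC}}(\mathbf{x},\mathbf{y})\simeq|y-x|+|\bbm y-\bbm x-x\otimes(y-x)|^{1/2}$ with $\mathbf{x}=\bX_{s,t}^{(j)}$, $\mathbf{y}=\bX_{s,t}$, and the triangle inequality to split off the cross term $X_{s,t}^{(j)}\otimes(X_{s,t}-X_{s,t}^{(j)})$, one obtains
\[
d_{{\rm CC}}\bigl(\bX_{s,t}^{(j)},\bX_{s,t}\bigr)\lesssim\bigl|X_{s,t}-X_{s,t}^{(j)}\bigr|+\bigl|\bbX_{s,t}-\bbX_{s,t}^{(j)}\bigr|^{1/2}+\bigl|X_{s,t}^{(j)}\bigr|^{1/2}\bigl|X_{s,t}-X_{s,t}^{(j)}\bigr|^{1/2}.
\]
Taking $L^{q}(\Prob)$-norms and applying Cauchy--Schwarz to the cross term gives
\[
\bigl\Vert d_{{\rm CC}}\bigl(\bX_{s,t}^{(j)},\bX_{s,t}\bigr)\bigr\Vert_{L^{q}(\Prob)}\lesssim\bigl\Vert X_{s,t}-X_{s,t}^{(j)}\bigr\Vert_{L^{q}}+\bigl\Vert\bbX_{s,t}-\bbX_{s,t}^{(j)}\bigr\Vert_{L^{q/2}}^{1/2}+\bigl\Vert X_{s,t}^{(j)}\bigr\Vert_{L^{q}}^{1/2}\bigl\Vert X_{s,t}-X_{s,t}^{(j)}\bigr\Vert_{L^{q}}^{1/2},
\]
and each term vanishes as $j\to\infty$, using the two previous steps together with the uniform bound on $\Vert X_{s,t}^{(j)}\Vert_{L^{q}}$ from Proposition~\ref{thm:X^(j)-L^p-bdd}.

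The hard part here is essentially nonexistent: all the genuine analytic work has been done in the preceding two propositions, so this lemma is a packaging step. The only bookkeeping concern is that $\bX_{s,t}$ actually lies in $G^{(2)}(\fg)$, but this holds because $G^{(2)}(\fg)$ is cut out of $T_{1}^{(2)}(\fg)$ by the closed algebraic condition that $\bbm x-\tfrac{1}{2}x\otimes x$ be antisymmetric---a condition preserved under almost-sure (hence $L^{p}$) limits.
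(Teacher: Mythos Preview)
Your proof is correct and follows essentially the same approach as the paper: cite Proposition~\ref{thm:X^(j)-L^p-converge} for the first-level convergence and Proposition~\ref{thm:lim_{jj'}|bbX-bbX|=00003D0} for the second-level convergence. You have in fact been more careful than the paper, which leaves the $L^{2}\to L^{p}$ upgrade via Gaussianity, the $d_{{\rm CC}}$-equivalence computation, and the membership $\bX_{s,t}\in G^{(2)}(\fg)$ entirely implicit.
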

\begin{proof}
The convergence of $\lim_{j}X_{s,t}^{(j)}$ in $L^{p}(\Prob,\fg)$
follows from Prop. \ref{thm:X^(j)-L^p-converge}. The convergence
of $\lim_{j}\bbX_{s,t}^{(j)}$ in $L^{p}(\Prob,\fg\otimes\fg)$ follows
from Prop. \ref{thm:lim_{jj'}|bbX-bbX|=00003D0}.\end{proof}
\begin{thm}
[rough path convergence in $L^p$] \label{thm:roughPathConvergeLp}\thlab{thm:roughPathConvergeLp}
Suppose $\crv\in\Crv_{\infty}$, $\rpHol\in(1/3,1/2)$, and $p\ge1$.
Let $\bX_{s,t}=\lim_{j}\bX_{s,t}^{(j)}$ be given by Lemma \ref{thm:pointwise-Lp-convergence},
and $\symb{\bX_{t}}:=\bX_{0,t}=\bigl(1,X_{t},\bbX_{t}\bigr)$. Then
$\bX$ is a weak geometric $\rpHol$-H\"older rough path, i.e. $\bX\in C^{\rpHol\hHol}\bigl([0,1],G^{(2)}(\fg)\bigr)$,
and $\bX^{(j)}\to\bX$ in $C^{\rpHol\hHol}\bigl([0,1],G^{(2)}(\fg)\bigr)$
and $L^{p}(\Prob)$, i.e.
\[
\lim_{j\to\infty}\left\Vert d_{{\rm CC},\rpHol\hHol;[0,1]}\bigl(\bX,\bX^{(j)}\bigr)\right\Vert _{L^{p}(\Prob)}=0.
\]
\end{thm}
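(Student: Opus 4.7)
The plan is to verify the hypotheses of the Kolmogorov-type convergence theorem for rough paths (Theorem~\ref{thm:roughPath-Lq-convergence-G2}) and apply it to the sequence $\bX^{(j)}$ together with the candidate limit $\bX$ constructed by Lemma~\ref{thm:pointwise-Lp-convergence}.

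First, since $X^{(j)}$ is smooth, $\bX^{(j)} = \mathrm{lift}(X^{(j)})$ takes values in $G^{(2)}(\fg)$ by definition (cf.\ \eqref{eq:def:G^(2)(V)}). To apply Theorem~\ref{thm:roughPath-Lq-convergence-G2}, I fix an auxiliary exponent $\alpha \in (\rpHol, 1/2)$. The bound hypothesis \eqref{eq:KC-cond-bound} then follows directly from Lemma~\ref{thm:roughPathBoundsInLp}, which says
\[
\sup_{j \ge -1}\bigl\Vert d_{{\rm CC};\alpha\hHol:[0,1]}\bigl(\bX^{(j)},o\bigr)\bigr\Vert_{L^{q}(\Prob)} < \infty
\]
for any $q \in [1,\infty)$. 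The pointwise convergence hypothesis \eqref{eq:KC-cond-converge} is exactly the content of Lemma~\ref{thm:pointwise-Lp-convergence}: for each $t \in [0,1]$, $\bigl\Vert d_{{\rm CC}}(\bX_t^{(j)}, \bX_t)\bigr\Vert_{L^{q}(\Prob)} \to 0$, where I use that $X_t^{(j)}$ lies in the first and $\bbX_t^{(j)}$ in the second inhomogeneous Wiener chaos, so $L^{p}$ and $L^{q}$ norms are all equivalent and Lemma~\ref{thm:pointwise-Lp-convergence} upgrades to arbitrary $q$.

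Applying Theorem~\ref{thm:roughPath-Lq-convergence-G2} with the exponent $\alpha' = \rpHol \in (0,\alpha)$ then yields
\[
\lim_{j\to\infty}\bigl\Vert d_{{\rm CC},\rpHol\hHol;[0,1]}\bigl(\bX^{(j)},\bX\bigr)\bigr\Vert_{L^{p}(\Prob)} = 0,
\]
which is the convergence statement of the theorem. It remains to check that $\bX \in C^{\rpHol\hHol}\bigl([0,1],G^{(2)}(\fg)\bigr)$. The $L^{p}$ convergence above gives an a.s.\ convergent subsequence $\bX^{(j_k)} \to \bX$ in the $\rpHol$-Hölder topology, which in particular forces $d_{{\rm CC},\rpHol\hHol;[0,1]}(\bX, o) < \infty$ a.s.\ and $\bX_{s,t} \in G^{(2)}(\fg)$ a.s.\ for each pair $(s,t)$, since $G^{(2)}(\fg)$ is closed in $T_{1}^{(2)}(\fg)$ (by its explicit algebraic description). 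Continuity of $t \mapsto \bX_t$ into $G^{(2)}(\fg)$ then follows from the $\rpHol$-Hölder control.

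The only real subtlety is the requirement that the \emph{same} subsequence realize membership in $G^{(2)}(\fg)$ for all dyadic $(s,t)$ simultaneously, which follows by a standard diagonal extraction; after that every remaining step is a direct invocation of the already-established lemmas. I expect no further obstacle.
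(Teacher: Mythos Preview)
Your proposal is correct and follows essentially the same approach as the paper: the paper's proof is the one-line statement that the result ``immediately follows from Prop.~\ref{thm:roughPathBoundsInLp}, Prop.~\ref{thm:pointwise-Lp-convergence}, and Theorem~\ref{thm:roughPath-Lq-convergence-G2},'' and you have simply unpacked how these three ingredients fit together (choosing $\alpha\in(\rpHol,1/2)$ and $\alpha'=\rpHol$, and adding the routine check that the limit lies in $C^{\rpHol\hHol}([0,1],G^{(2)}(\fg))$). One small remark: the hypothesis \eqref{eq:KC-cond-bound} in Theorem~\ref{thm:roughPath-Lq-convergence-G2} requires the uniform bound also at $n=\infty$, which you do not address explicitly, but this follows immediately from Fatou's lemma applied to the bounds for finite $j$ together with the pointwise convergence.
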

\begin{proof}
This immediately follows from Prop. \ref{thm:roughPathBoundsInLp},
Prop. \ref{thm:pointwise-Lp-convergence}, and Theorem \ref{thm:roughPath-Lq-convergence-G2}.\end{proof}
\begin{cor}
\label{thm:roughPathConverge-ae}\thlab{thm:roughPathConverge-ae}
Suppose $\crv\in\Crv_{\infty}$, $\rpHol\in(1/3,1/2)$. Then if $n:\N\to\N$
increases rapidly enough, 
\[
\Prob\left[\lim_{k\to\infty}d_{{\rm CC},\rpHol\hHol;[0,1]}\bigl(\bX,\bX^{(n(k))}\bigr)=0\right]=1.
\]

\end{cor}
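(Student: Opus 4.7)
The plan is to upgrade the $L^p$-convergence from Theorem \ref{thm:roughPathConvergeLp} to almost-sure convergence along a suitably sparse subsequence $n(k)$, by the standard Borel--Cantelli argument.

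First, fix any $p \ge 1$ (say $p=1$ suffices). By Theorem \ref{thm:roughPathConvergeLp},
\[
a_j := \bigl\Vert d_{{\rm CC},\rpHol\hHol;[0,1]}\bigl(\bX,\bX^{(j)}\bigr)\bigr\Vert_{L^{p}(\Prob)} \to 0
\]
as $j \to \infty$. Hence we can choose $n:\N \to \N$ strictly increasing so rapidly that $a_{n(k)} \le 2^{-2k}$ for every $k \in \N$.

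Next, for each $k$, apply Markov's inequality with threshold $2^{-k}$:
\[
\Prob\Bigl[ d_{{\rm CC},\rpHol\hHol;[0,1]}\bigl(\bX,\bX^{(n(k))}\bigr) > 2^{-k} \Bigr] \le 2^{kp} a_{n(k)}^{p} \le 2^{kp} \cdot 2^{-2kp} = 2^{-kp}.
\]
Since $\sum_{k} 2^{-kp} < \infty$, the first Borel--Cantelli lemma yields
\[
\Prob\Bigl[ d_{{\rm CC},\rpHol\hHol;[0,1]}\bigl(\bX,\bX^{(n(k))}\bigr) > 2^{-k} \text{ for infinitely many } k \Bigr] = 0.
\]
On the complementary event, which has probability one, eventually $d_{{\rm CC},\rpHol\hHol;[0,1]}\bigl(\bX,\bX^{(n(k))}\bigr) \le 2^{-k}$, so in particular the distance tends to zero as $k \to \infty$. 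This gives the claim.

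There is no real obstacle: the only ingredients are the $L^p$ rough-path convergence already established in Theorem \ref{thm:roughPathConvergeLp} and Markov plus Borel--Cantelli. The mild care is in choosing the rate (here $2^{-2k}$ versus threshold $2^{-k}$) so that the tail probabilities become summable; any faster-decaying choice would work equally well, and this is precisely what the phrase ``if $n:\N\to\N$ increases rapidly enough'' refers to.
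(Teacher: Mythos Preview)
Your argument is correct and is exactly the standard extraction of an almost-sure subsequence from $L^{p}$ convergence via Markov's inequality and Borel--Cantelli; the paper states the corollary without proof, relying implicitly on precisely this well-known passage from Theorem~\ref{thm:roughPathConvergeLp}.
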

{}

Now the ODE (\ref{eq:ptrans-normalform}) for the $j$th \term{approximate holonomy}
$\symb{\ptrans_{\crv,A}^{(j)}}$ associated with $W^{(j)}$ is written
as

\[
d\ptrans_{\crv,A}^{(j)}=\cV(\ptrans_{\crv,A}^{(j)})dX_{\crv}^{(j)},\quad\ptrans_{\crv,A}^{(j)}(0)=1_{G}\in G.
\]
Recall that $X_{\crv}^{(j)}$ is expressed by $W^{(j)}$ by (\ref{eq:X^(j)=00003DcE(W,1)}).

\begin{thm}
\label{thm:aeRoughPathConverge} For any countable subset $\Gamma\subset\Crv_{\infty}$,
and $n:\N\to\N$ increasing rapidly enough,
\[
\Prob\Bigl[\forall\crv\in\Gamma,\ \symb{\ptrans_{\crv}^{(\infty)}}:=\lim_{k\to\infty}\ptrans_{\crv}^{(n(k))}\,(\text{uniform})\in C([0,1],G)\Bigr]=1.
\]
Moreover, for, $\rpHol\in(1/3,1/2)$, $\lift(\ptrans_{\crv}^{(n(k))})$
converges to $\symb{\hat{\ptrans}_{\crv}^{(\infty)}}=(1,\hat{\ptrans}_{\crv}^{(\infty)[1]},\hat{\ptrans}_{\crv}^{(\infty)[2]})\in C^{\rpHol\hHol}([0,1],G^{(2)}(\Mat))$
a.s., where $\hat{\ptrans}_{\crv}^{(\infty)[1]}=\ptrans_{\crv}^{(\infty)}$.
That is,
\[
\Prob\Bigl[\forall\crv\in\Gamma,\ \lim_{k\to\infty}d_{{\rm CC},\rpHol\hHol;[0,1]}\left(\hat{\ptrans}_{\crv}^{(\infty)},\lift(\ptrans_{\crv}^{(n(k))})\right)=0\Bigr]=1.
\]
\end{thm}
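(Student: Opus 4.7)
The plan is to combine the $L^p$ rough-path convergence of Theorem \ref{thm:roughPathConvergeLp} with a Borel--Cantelli/diagonal argument, thereby extracting a single subsequence $n(k)$ that works simultaneously for every $\crv \in \Gamma$, and then to transfer this convergence to the approximate holonomies $\ptrans_\crv^{(n(k))}$ via the continuity of the It\^o--Lyons map (Theorem \ref{thm:fullRDE-conti}).

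Enumerate $\Gamma = \{\crv_i\}_{i\in\N}$ and fix an auxiliary exponent $\rpHol_2 \in (\rpHol, 1/2)$ and some $p \ge 1$. By Theorem \ref{thm:roughPathConvergeLp}, for each $i$ the quantity $\bigl\Vert d_{{\rm CC},\rpHol_2\hHol;[0,1]}(\bX_{\crv_i}^{(j)},\bX_{\crv_i})\bigr\Vert_{L^p(\Prob)}$ tends to $0$ as $j\to\infty$. Using Markov's inequality I inductively pick a strictly increasing $n:\N\to\N$ so rapidly that
$$\Prob\bigl[d_{{\rm CC},\rpHol_2\hHol;[0,1]}(\bX_{\crv_i}^{(n(k))},\bX_{\crv_i}) > 2^{-k}\bigr] < 2^{-k}/k, \qquad \forall\, i \le k;$$
this is feasible at each step because only finitely many indices $i\le k$ need be accommodated. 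For each fixed $i$ the tail $\sum_{k\ge i}2^{-k}/k$ converges, so Borel--Cantelli yields $d_{{\rm CC},\rpHol_2\hHol}(\bX_{\crv_i}^{(n(k))},\bX_{\crv_i}) \to 0$ a.s. Intersecting the countably many probability-$1$ events produces a full-measure set $\Omega_0$ on which $\bX_\crv^{(n(k))} \to \bX_\crv$ in the $\rpHol_2$-H\"older rough-path topology for every $\crv \in \Gamma$ simultaneously.

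Now fix $\omega \in \Omega_0$ and $\crv \in \Gamma$. The $\rpHol_2$-H\"older convergence implies that $\{\bX_\crv^{(n(k))}\}_k$ lies in some ball $C_{\le R}^{\rpHol_2\hHol}$ (with $R = R(\omega,\crv)$), and since $\rpHol < \rpHol_2$ and $[0,1]$ is bounded the convergence also holds in the weaker metric $d_{{\rm CC},\rpHol\hHol}$. The bounded smoothness of $\cV$ provides $\cV \in \Lip^\gamma$ for any $\gamma > 1/\rpHol_2 > 2$. Applying Theorem \ref{thm:fullRDE-conti} with the exponent pair of the cited statement equal to $(\rpHol_2,\rpHol)$, the full RDE outputs $\lift(\ptrans_\crv^{(n(k))}) = \bpi_{(\cV)}(0,1_G;\bX_\crv^{(n(k))})$ converge in $d_{{\rm CC},\rpHol\hHol}$ to a limit $\hat\ptrans_\crv^{(\infty)} \in C^{\rpHol_2\hHol}([0,1],G^{(2)}(\Mat)) \subset C^{\rpHol\hHol}([0,1],G^{(2)}(\Mat))$. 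Projecting onto the first tensor level gives uniform convergence $\ptrans_\crv^{(n(k))} \to \ptrans_\crv^{(\infty)}$ in $C([0,1],G)$ (the limit lies in $G$ because $G$ is closed in $\Mat$).

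The main non-routine point is the simultaneous diagonal choice of $n(k)$: a single subsequence must control all countably many curves at once, which is possible precisely because Theorem \ref{thm:roughPathConvergeLp} provides a quantitative $L^p$ rate. The other mild technical issue is the need to upgrade the rough-path convergence from $\rpHol$-H\"older (as stated in Corollary \ref{thm:roughPathConverge-ae}) to the strictly higher regularity $\rpHol_2$, so that Theorem \ref{thm:fullRDE-conti} delivers output convergence in $\rpHol$-H\"older rather than a weaker topology.
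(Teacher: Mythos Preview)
Your proof is correct and follows essentially the same route as the paper: extract a single subsequence working for all $\crv\in\Gamma$ from the $L^p$ rough-path convergence of Theorem~\ref{thm:roughPathConvergeLp} (the paper phrases this tersely as taking $n(k)=\max_{i\le k}n_i(k)$ with each $n_i$ coming from Corollary~\ref{thm:roughPathConverge-ae}, while you spell out the Borel--Cantelli diagonalisation explicitly), and then transfer the convergence to the holonomies via the continuity of the It\^o--Lyons map. Your introduction of the auxiliary exponent $\rpHol_2>\rpHol$ is harmless but not actually needed, since Theorem~\ref{thm:fullRDE-conti} permits $\rpHol'=\rpHol$: convergence of $\bX_\crv^{(n(k))}$ in $d_{{\rm CC},\rpHol\hHol}$ already gives both the bounded-ball membership and the input convergence required there.
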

\begin{proof}
Note that if we let $n_{i}:\N\to\N$ be increasing for each $i\in\N$,
then $n(k):=\max_{1\le i\le n}n_{i}(k)\ (k\in\N)$ increases more
rapidly than each $n_{i}$. Thus the theorem follows from Theorems
\ref{thm:Existence-of-RDE-sol}, \ref{thm:Existence-of-RDE-sol-full},
\ref{thm:fullRDE-conti} and Corollary \ref{thm:roughPathConverge-ae}.
\end{proof}
We call $\ptrans_{\crv}^{(\infty)}(1)$ the \term{holonomy-valued random variable}
(or simply the \term{holonomy variable}) along $\crv\in\Lasso\cap\Crv_{\infty}$.

\section{Wilson loop}

\label{sec:wilson-loop}

The \term{law of Wilson loops in  the YM theory} on $\R^{2}$ (with
the usual Euclidean metric) is described as follows (e.g. \cite{Lev03}):
Let $\cL$ be a set of lassos with some regularity condition. Then 

(i) The Wilson loop $\ptrans_{\crv}(1)$ is independent of $\ptrans_{\crv'}(1)$
if $\crv,\crv'\in\cL$ and $D(\crv)^{\circ}\cap D(\crv')^{\circ}=\emptyset$ 

(ii) The density $\rho$ of the Wilson loop $\ptrans_{\crv^{k}}(1)$
on $G$ with respect to Haar measure $dg$ is given by $\rho(g)=Q_{\Leb(D(\crv))}(g)$,
where $\symb{Q_{t}(x)}$ ($t\ge0$) denotes the fundamental solution
to the heat equation on the group $G$. 

In this section we show that holonomy variables $\ptrans_{\crv^{k}}^{(\infty)}$
given by Theorem \ref{thm:aeRoughPathConverge} obey the law the Wilson
loops in  the YM theory on $\R^{2}$.

Recall that $\symb{\fD}$ is the set of subsets $D\subset\R^{2}$
such that there exists a simple loop $\crv\in\Crv$ enclosing $D$,
and that $\rectlikedomain_{1}$ is the set of $E\in\fD$ such that
$E$ is convex w.r.t. $x_{1}$ (see (\ref{eq:def:rectlike})).

We use the following lemma in the proof of Theorem \ref{thm:wilson-indep-gauss-rect}.
\begin{lem}
{\rm{}\cite[Lemma 3.2.3]{Sen92}} \label{thm:gMg-indep}Let $M:\samples\to\fg$
be a random variable, $\Sigma$ a $\sigma$-algebra of measurable
subsets of $\samples$, and $g:\samples\to G$ a random variable which
is measurable with respect to $\Sigma$. Assume that $M$ is independent
of $\Sigma$ and that the distribution of $M$ is the same as that
of $xMx^{-1}$ for every $x\in G$. Then the $\fg$-valued random
variable $gMg^{-1}$ is independent of $\Sigma$ and has the same
distribution as $M$.
\end{lem}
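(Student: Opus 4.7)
The plan is to show that for every bounded Borel function $f:\fg\to\R$ and every $A\in\Sigma$,
\[
\Ex\bigl[f(gMg^{-1})\bOne_{A}\bigr]=\Ex[f(M)]\,\Prob[A],
\]
because this single identity simultaneously gives (a) independence of $gMg^{-1}$ from $\Sigma$ (by choosing arbitrary $A\in\Sigma$) and (b) equality in distribution of $gMg^{-1}$ and $M$ (by choosing $A=\samples$).

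First I would condition on $\Sigma$: since $g$ and $\bOne_{A}$ are $\Sigma$-measurable while $M$ is independent of $\Sigma$, a standard ``freezing lemma'' (sometimes called the independence lemma for conditional expectation) gives
\[
\Ex\bigl[f(gMg^{-1})\,\bigm|\,\Sigma\bigr](\omega)=\Phi(g(\omega)),\qquad\Phi(x):=\Ex\bigl[f(xMx^{-1})\bigr],\ x\in G.
\]
Here $\Phi$ is a well-defined bounded Borel function on $G$ because $(x,m)\mapsto f(xmx^{-1})$ is jointly measurable and bounded. The conjugation-invariance hypothesis says that $xMx^{-1}$ has the same law as $M$ for each fixed $x\in G$, so $\Phi(x)=\Ex[f(M)]$ is constant in $x$. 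Consequently
\[
\Ex\bigl[f(gMg^{-1})\,\bigm|\,\Sigma\bigr]=\Ex[f(M)]\quad\text{a.s.,}
\]
and multiplying by $\bOne_{A}$ and taking expectation yields the displayed identity.

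From there the conclusion is immediate: taking $A=\samples$ shows $\Ex[f(gMg^{-1})]=\Ex[f(M)]$ for all bounded Borel $f$, so $gMg^{-1}$ and $M$ have the same law; and for general $A\in\Sigma$ the identity factors as a product, giving independence of $gMg^{-1}$ from $\Sigma$. The only step requiring any care is the freezing identity, which is routine once one notes that $G$ is a (compact) Polish group so that $(x,m)\mapsto xmx^{-1}$ is continuous and hence jointly measurable; no genuine obstacle arises.
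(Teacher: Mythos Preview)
Your argument is correct and is the standard conditioning proof of this fact. Note, however, that the paper does not actually prove this lemma: it is quoted verbatim from \cite[Lemma~3.2.3]{Sen92} and used as a black box, so there is no ``paper's own proof'' to compare against. Your proof is essentially the one Sengupta gives, and nothing more is needed here.
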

{}

If $E$ is a measurable subset of $\R^{2}$ then $\symb{\tau(E)}$
will denote the $\sigma$-algebra generated by all the random variables
$W(E')$ as $E'$ runs over the measurable subsets of $E$.
\begin{thm}
\label{thm:wilson-indep-gauss-rect}Let $\crv\in\Crv_{\infty}\cap\Lasso(x)$
satisfy $D(\crv)\in\rectlikedomain_{1}$. Then 

(i) The $G$-valued random variable $\ptrans_{\crv}^{(\infty)}(1)$
is independent of the $\sigma$-algebra $\tau(\R^{2}\setminus D(\crv))$.

(ii) The density $\rho$ of the Wilson loop $\ptrans_{\crv}^{(\infty)}(1)$
on $G$ with respect to Haar measure $dg$ is given by $\rho(g)=Q_{\Leb(D(\crv))}(g).$
In other words,
\[
\Ex\left[f(\ptrans_{\crv}^{(\infty)}(1))\right]=\int_{G}f(g)Q_{\Leb(D(\crv))}(g)dg.
\]
for every bounded Borel function $f$ on $G$. \end{thm}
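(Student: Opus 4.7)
I reduce to the decomposition of Section \ref{sec:axial-gauge} and take a rough-path limit. Write $\crv^{\rm L}, \crv^{\rm R}$ for the left/right boundary arcs of $D := D(\crv)$ (the curves called $\crv^{1}, \crv^{2}$ in Section \ref{sec:axial-gauge}; renamed to avoid clash with the lasso decomposition of $\crv$). A nontrivial lasso tail contributes only an outer conjugation of the untailed Wilson loop $U(b) := \ptrans_{\crv^{\rm L}}^{(\infty)}(b)^{-1}\,\ptrans_{\crv^{\rm R}}^{(\infty)}(b)$ by the tail holonomy $g$; this is handled at the end via Lemma \ref{thm:gMg-indep} (in its $G$-valued analogue) together with the fact that the heat kernel $Q_{t}$ is a class function on $G$. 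Thus the main content is to show: $U(b)$ is Brownian motion on $G$ at time $\Leb(D)$, independent of $\tau(\R^{2}\setminus D)$.

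\textbf{Step 1 (approximating RDE and the $\fg$-Brownian motion).} At the $j$-th smooth approximation, Section \ref{sec:axial-gauge} applied to $A^{(j)}$ gives
\[
dU^{(j)}(t) = -U^{(j)}(t)\,dB_{t}^{D,(j)}, \qquad U^{(j)}(a)=I,
\]
with $F_{t}^{D,(j)} := \int_{D_{t}} W^{(j)}(x)\,dx$ and $B_{t}^{D,(j)} := \int_{a}^{t} \ptrans_{\crv^{\rm L}}^{(j)}(s)^{-1}\,dF_{s}^{D,(j)}\,\ptrans_{\crv^{\rm L}}^{(j)}(s)$. In $L^{p}(\Prob,\fg)$, $F_{t}^{D,(j)} \to F_{t}^{D} := W(\bOne_{D_{t}})$, a centered Gaussian with covariance $\Ex[\langle F_{t}^{D}, F_{s}^{D}\rangle_{\fg}] = (\dim\fg)\,\Leb(D_{s\wedge t})$; hence $F^{D}$ is a $\fg$-valued Brownian motion time-changed by $\Leb(D_{\cdot})$, with Ad-invariant covariance (the HS inner product on $\mathfrak{su}(\nmat)$ is a multiple of the Killing form).

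\textbf{Step 2 (independence from axial-gauge support).} The test function $\cE_{\crv^{\rm L}}\bOne_{[0,t]}$ is supported in $L := \{(x_{1},x_{2}) : \exists\, s,\ \crv^{\rm L}_{2}(s)=x_{2},\ 0\le x_{1}\le \crv^{\rm L}_{1}(s)\}$, lying to the left of $\crv^{\rm L}$. Since $D \in \rectlikedomain_{1}$ with left boundary $\crv^{\rm L}$, we have $L \cap D^{\circ} = \emptyset$, so $X_{\crv^{\rm L}}^{(\infty)}(t) = W(\cE_{\crv^{\rm L}}\bOne_{[0,t]})$ is $\tau(\R^{2}\setminus D)$-measurable for every $t$. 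By Theorem \ref{thm:aeRoughPathConverge}, the same holds for $\ptrans_{\crv^{\rm L}}^{(\infty)}$. Conversely, $F^{D}$ is $\tau(D)$-measurable, and the white-noise property gives $F^{D}$ independent of $\tau(\R^{2}\setminus D)$, in particular of $\ptrans_{\crv^{\rm L}}^{(\infty)}$.

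\textbf{Step 3 (rough-path limit, conclusion, and main obstacle).} I pass the ODE of Step 1 to a rough-path limit by applying Theorems \ref{thm:Existence-of-RDE-sol}--\ref{thm:fullRDE-conti} to a joint rough-path lift of $(X_{\crv^{\rm L}}^{(j)}, F^{D,(j)})$, whose convergence requires extending the estimates of Sections 8--11 to the joint driver. This yields an RDE $dU = -U\,dB^{D}$, $U(a)=I$. The conjugation-invariance argument (a process-level version of Lemma \ref{thm:gMg-indep}): by Step 2 and Ad-invariance of the covariance of $F^{D}$, conditionally on $\ptrans_{\crv^{\rm L}}^{(\infty)}$ the process $B^{D}$ has the same Gaussian law as $F^{D}$, proved by approximating $\ptrans_{\crv^{\rm L}}^{(\infty)}$ by step processes on partitions $a=t_{0}<\cdots<t_{N}=b$ and applying Lemma \ref{thm:gMg-indep} increment by increment (each $F_{t_{k+1}}^{D}-F_{t_{k}}^{D}$ is independent of earlier increments and of $\ptrans_{\crv^{\rm L}}^{(\infty)}$). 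Hence $B^{D}$ is a $\fg$-valued Brownian motion time-changed by $\Leb(D_{\cdot})$ and independent of $\tau(\R^{2}\setminus D)$, and $U(b)$ is Brownian motion on $G$ at time $\Leb(D)$ with density $Q_{\Leb(D)}$, independent of $\tau(\R^{2}\setminus D)$. The main technical obstacle is this Step 3: constructing the joint rough-path lift of $(X_{\crv^{\rm L}}^{(j)}, F^{D,(j)})$ and defining $B^{D}$ as a rough integral, together with the process-level extension of Lemma \ref{thm:gMg-indep}; once $B^{D}$ is identified as a time-changed Brownian motion on $\fg$ independent of the outside, identification of $U(b)$ with Brownian motion on $G$ of density $Q_{\Leb(D)}$ is classical.
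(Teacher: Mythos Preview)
Your argument follows the same architecture as the paper's: the decomposition of Section~\ref{sec:axial-gauge} into $U^{(j)}$ satisfying $dU^{(j)}=-U^{(j)}\,dB^{D,(j)}$, the identification of $F^{D,(\infty)}=W(\bOne_{D_t})$ as a time-changed $\fg$-Brownian motion, the use of Lemma~\ref{thm:gMg-indep} to transfer independence and law from $F^{D}$ to $B^{D}$, and the final identification of $U^{(\infty)}$ as $G$-Brownian motion via the Stratonovich reading of the limiting RDE. Your Step~2 (support of $\cE_{\crv^{\rm L}}\bOne_{[0,t]}$ lies outside $D^\circ$, hence $\ptrans_{\crv^{\rm L}}^{(\infty)}$ is $\tau(\R^2\setminus D)$-measurable) makes explicit something the paper leaves implicit.

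The one substantive difference is your Step~3. You frame the passage to the limit as requiring a \emph{joint} rough-path lift of $(X_{\crv^{\rm L}}^{(j)},F^{D,(j)})$, and you flag this as the main obstacle. The paper bypasses this: it observes that $F^{D,(\infty)}$ is Brownian and that $\ptrans_{\crv^{\rm L}}^{(\infty)}$ (already constructed by Theorem~\ref{thm:aeRoughPathConverge} applied to $\crv^{\rm L}$ alone) is independent of it, so the integral $B_t^{D,(\infty)}=\int_a^t \ptrans_{\crv^{\rm L}}^{(\infty)}(s)^{-1}\,dF_s^{D,(\infty)}\,\ptrans_{\crv^{\rm L}}^{(\infty)}(s)$ can be defined directly as a Stratonovich (or Brownian rough path) integral, with no need to revisit the estimates of Sections~8--10 for a two-component driver. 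In other words, the paper decouples the two limits rather than coupling them. This is less work than what you propose, though your route would also succeed and is arguably more in the spirit of the rest of the paper. The paper's proof is explicitly a sketch referencing \cite{Sen92,Dri89}, and your version is in places more careful about what needs checking.
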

\begin{proof}
The proof of (i) is similar to that of \cite[Lemma 3.2.6]{Sen92},
and the proof of (ii) is to that of \cite[Theorem 3.2.10]{Sen92}
(see also \cite{Dri89}), and so we will give only a sketch. 

(i) In the settings of Sec. \ref{sec:axial-gauge}, let $F_{12}=W^{(j)}$,
and denote the corresponding $F_{t}^{D}$, $B_{t}^{D}$ and $U$ by
$\symb{F_{t}^{D,(j)}}$, $\symb{B_{t}^{D,(j)}}$ and $\symb{U^{(j)}}$,
respectively. Let
\begin{equation}
\symb{F_{t}^{D,(\infty)}}:=\lim_{j\to\infty}F_{t}^{D,(j)}=W(D_{t}),\quad\symb{B_{t}^{D,(\infty)}}:=\lim_{j\to\infty}B_{t}^{D,(j)}\label{eq:def:B^(D,infty)}
\end{equation}
Let us write $B_{t}^{D,(\infty)}$ as a formal integral
\begin{equation}
B_{t}^{D,(\infty)}=\int_{a}^{t}\ptrans_{\crv^{1}}^{(\infty)}(s)^{-1}dF_{s}^{D,(\infty)}\ptrans_{\crv^{1}}^{(\infty)}(s).\label{eq:B^(D,infty)-int}
\end{equation}
We see that $F_{t}^{D,(\infty)}$is a $t$-reparametrization of a
standard $\fg$-valued Brownian motion such that
\[
\Ex\Bigl[\bigl\Vert F_{t}^{D,(\infty)}\bigr\Vert_{\HS}^{2}\Bigr]=\Leb(D_{t}).
\]
Hence the formal integral (\ref{eq:B^(D,infty)-int}) can be justified
as a rough integral for Brownian rough paths \cite{FH14}, and also
as a stochastic integral in the Stratonovich sense. Thus we see that
$B_{t}^{D,(\infty)}$ is also $t$-reparametrization of a standard
$\fg$-valued Brownian motion with $\Ex\Bigl[\bigl\Vert B_{t}^{D,(\infty)}\bigr\Vert_{\HS}^{2}\Bigr]=\Leb(D_{t})$.
By Theorem \ref{thm:aeRoughPathConverge}, we see $B_{t}^{D,(n(k))}\to B_{t}^{D,(\infty)}$
as $k\to\infty$ uniformly a.s., if $n:\N\to\N$ increases rapidly
enough; Moreover we find that $\lift(B_{t}^{D,(n(k))})$ converges
to $\symb{\bB^{D,(\infty)}}=(1,B^{D,(\infty)},\bbB^{D,(\infty)})$
in $C^{\rpHol\hHol}([0,1],G^{(2)}(\Mat))$. 

The ODE (\ref{eq:dU=00003D-UdB}) is now written as
\begin{align}
dU^{(j)}(t)= & -U^{(j)}(t)dB_{t}^{D,(j)}.\quad
\end{align}
By Theorem \ref{thm:aeRoughPathConverge} and \ref{thm:Existence-of-RDE-sol},
we find that $\symb{U^{(\infty)}}:=\pi\left(0,I;-\bB^{D,(\infty)}\right)$
is well-defined, that is, the solution of the RDE
\begin{align}
dU^{(\infty)}(t)=-U^{(\infty)}(t)d\bB_{t}^{D,(\infty)},\label{eq:dU^infty=00003D-U^inftydbB}
\end{align}
uniquely exists. Since $F_{t}^{D,(\infty)}$ is independent of $\tau(\R^{2}\setminus D(\crv))$,
we see from (\ref{eq:def:B^(D,infty)}) and Lemma \ref{thm:gMg-indep}
that $B_{t}^{D,(\infty)}$ is independent of $\tau(\R^{2}\setminus D(\crv))$,
and so is $\bB^{D,(\infty)}$. Hence $U^{(\infty)}(t)$, especially
$\ptrans_{\crv}(1)=U^{(\infty)}(1)$, is also independent of $\tau(\R^{2}\setminus D(\crv))$. 

(ii) Since $B_{t}^{D,(\infty)}$ is a reparametrization of a standard
$\fg$-valued Brownian motion with $\Ex\bigl[\bigl\Vert B_{t}^{D,(\infty)}\bigr\Vert_{\HS}^{2}\bigr]=\Leb(D_{t})$,
Eq. (\ref{eq:dU^infty=00003D-U^inftydbB}) leads to the Stratonovich
SDE
\[
dU^{(\infty)}(t)=-U^{(\infty)}(t)\circ dB_{t}^{D,(\infty)},
\]
which implies that $U^{(\infty)}(t)$ is a $t$-reparametrization
of a $G$-valued Brownian motion with density $Q_{\Leb(D_{t})}$.
Thus the Wilson loop $\ptrans_{\crv}^{(\infty)}(1)=U^{(\infty)}(1)$
has the density $Q_{\Leb(D_{1})}=Q_{\Leb(D(\crv))}$.
\end{proof}
Let $\symb{\rectlikedomain_{1,{\rm fin}}}$be the family of the finite
unions of sets in $\rectlikedomain_{1}$ which is : $\symb{\rectlikedomain_{1,{\rm fin}}}:=\{\bigcup_{k=1}^{n}D_{k};\ D_{k}\in\rectlikedomain_{1},1\le k\le n\in\N\}$.
\begin{cor}
Let $\crv\in\Crv_{\infty}\cap\Lasso(x)$ satisfy $D(\crv)\in\rectlikedomain_{1,{\rm fin}}$.
Then (i) and (ii) in Theorem \ref{thm:wilson-indep-gauss-rect} hold.\end{cor}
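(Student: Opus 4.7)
The plan is to reduce to Theorem \ref{thm:wilson-indep-gauss-rect} via the decomposition Lemma \ref{thm:ptrans-finite-lassos-prod}, combined with the convolution semigroup property of the heat kernel on $G$.

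First I would decompose the domain. By definition of $\rectlikedomain_{1,{\rm fin}}$, write $D(\crv)=\bigcup_{k=1}^{n}\tilde D_{k}$ with $\tilde D_{k}\in\rectlikedomain_{1}$. By successively slicing along horizontal lines through the extremal $x_{2}$-coordinates of the $\tilde D_{k}$, refine this to a decomposition $D(\crv)=\bigcup_{k=1}^{N}D_{k}$ with $D_{k}\in\rectlikedomain_{1}$ and pairwise disjoint interiors. Since $D(\crv)^{\circ}$ is connected, reorder the $D_{k}$ along a spanning tree of their adjacency graph so that $\bigl(\bigcup_{l\le k}D_{l}\bigr)^{\circ}$ is connected for each $k$. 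Applying Lemma \ref{thm:ptrans-finite-lassos-prod} pathwise to the smooth form $A^{(j)}$ produces lassos $\crv^{1},\dots,\crv^{N}\in\Crv\cap\Lasso(x)$, independent of $j$ and $\omega$, with $D(\crv^{k})=D_{k}$ and
\[
\ptrans_{\crv,A^{(j)}}(1)=\ptrans_{\crv^{N},A^{(j)}}(1)\cdots\ptrans_{\crv^{1},A^{(j)}}(1).
\]
Because each $D_{k}\in\rectlikedomain_{1}$, we may arrange $\crv^{k}\in\Crv_{\infty}$. Applying Theorem \ref{thm:aeRoughPathConverge} to the countable family $\{\crv\}\cup\{\crv^{k}\}_{k=1}^{N}$ and passing to the limit through the chosen subsequence $j=n(m)\to\infty$ yields
\[
\ptrans_{\crv}^{(\infty)}(1)=\ptrans_{\crv^{N}}^{(\infty)}(1)\cdots\ptrans_{\crv^{1}}^{(\infty)}(1)\qquad\text{a.s.}
\]

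Next I would invoke Theorem \ref{thm:wilson-indep-gauss-rect} for each $\crv^{k}$. Each $\ptrans_{\crv^{k}}^{(\infty)}(1)$ has density $Q_{\Leb(D_{k})}$ and is independent of $\tau(\R^{2}\setminus D_{k})$. Tracing the construction in that proof, the driving rough path $\bB^{D_{k},(\infty)}$ is built from $F^{D_{k},(\infty)}_t=W(D_{k,t})$ and from an axial-gauge holonomy along a boundary arc of $D_{k}$; both are $j\to\infty$ limits of quantities measurable with respect to $\tau(\overline{D_{k}})=\tau(D_{k})$ (boundaries carry no white-noise mass). Hence $\ptrans_{\crv^{k}}^{(\infty)}(1)$ is $\tau(D_{k})$-measurable. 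Since the restrictions $W\upha D_{k}$ are mutually independent and jointly independent of $W\upha(\R^{2}\setminus D(\crv))$, the tuple $\bigl(\ptrans_{\crv^{k}}^{(\infty)}(1)\bigr)_{k=1}^{N}$ is jointly independent and jointly independent of $\tau(\R^{2}\setminus D(\crv))$. Statement (i) follows immediately. For (ii), joint independence together with the convolution semigroup identity $Q_{s_{1}}*\cdots*Q_{s_{N}}=Q_{s_{1}+\cdots+s_{N}}$ for the heat kernel on the compact group $G$ gives that $\ptrans_{\crv}^{(\infty)}(1)$ has density $Q_{\sum_{k}\Leb(D_{k})}=Q_{\Leb(D(\crv))}$.

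The hardest step will be the geometric bookkeeping in the first paragraph: one must simultaneously arrange that the refined pieces lie in $\rectlikedomain_{1}$, have pairwise disjoint interiors, and admit an ordering satisfying hypothesis (iii) of Lemma \ref{thm:ptrans-finite-lassos-prod}. This is not conceptually hard but requires a careful argument. A secondary technical point is the $\tau(D_{k})$-measurability claim used in the independence argument, which is implicit in but not explicit in the proof of Theorem \ref{thm:wilson-indep-gauss-rect}; establishing it rigorously requires tracking how $\bB^{D_{k},(\infty)}$ depends on $W$ through the rough path limit guaranteed by Theorem \ref{thm:aeRoughPathConverge}.
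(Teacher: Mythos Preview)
Your overall strategy---decompose via Lemma \ref{thm:ptrans-finite-lassos-prod}, apply Theorem \ref{thm:wilson-indep-gauss-rect} to each piece, then assemble using the heat-kernel semigroup---is exactly what the paper's one-line proof intends.

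However, your $\tau(D_{k})$-measurability claim is wrong, and this breaks the independence argument as written. In axial gauge one has $A_{2}(x)=\int_{0}^{x_{1}}W(\xi,x_{2})\,d\xi$, so the parallel transport along the left boundary arc of $D_{k}$ depends on $W$ restricted to the horizontal strip between that arc and the $x_{2}$-axis. That strip lies in $\R^{2}\setminus D_{k}^{\circ}$, not in $D_{k}$; likewise the handle $\gamma(\crv^{k})$ contributes dependence on $W$ outside $D_{k}$. So neither $\bB^{D_{k},(\infty)}$ nor $\ptrans_{\crv^{k}}^{(\infty)}(1)$ is $\tau(D_{k})$-measurable, and you cannot deduce joint independence of the factors from disjointness of the $D_{k}$ alone.

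The correct route is the one already used inside the proof of Theorem \ref{thm:wilson-indep-gauss-rect}: the conjugator $\ptrans_{\crv^{1}}^{(\infty)}(s)$ is $\tau(\R^{2}\setminus D_{k})$-measurable (precisely because the strip lies outside $D_{k}$), while the increments $dF^{D_{k},(\infty)}$ are $\tau(D_{k})$-measurable with conjugation-invariant law; Lemma \ref{thm:gMg-indep} then yields that $\ptrans_{\crv^{k}}^{(\infty)}(1)$ is independent of $\tau(\R^{2}\setminus D_{k})$ and has law $Q_{\Leb(D_{k})}$. To upgrade this to the joint statement you need---namely that $(\ptrans_{\crv^{k}}^{(\infty)}(1))_{k}$ is jointly independent and jointly independent of $\tau(\R^{2}\setminus D(\crv))$---one applies the same conjugation lemma iteratively, conditioning successively on $\tau(\R^{2}\setminus D_{k})$ for $k=N,N-1,\ldots,1$, as in Sengupta's original argument. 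This is the step the paper suppresses; your proposal correctly locates where the work lies but mis-identifies which $\sigma$-algebra carries the conjugator.
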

\begin{proof}
Follows from Lemma \ref{thm:ptrans-finite-lassos-prod}.\end{proof}
\begin{cor}
Let $\crv^{1},\crv^{2},...\in\Crv_{\infty}\cap\Lasso$, and suppose
that $D(\crv^{k})\in\rectlikedomain_{1,{\rm fin}}$ for all $k\in\N$,
and $D(\crv^{k})^{\circ}\cap D(\crv^{l})^{\circ}=\emptyset$ for $k\neq l$.
Then the Wilson loop $\ptrans_{\crv^{k}}^{(\infty)}(1)$ is independent
of $\ptrans_{\crv^{l}}^{(\infty)}(1)$ if $k\neq l$, and has the
density $Q_{\Leb(D(\crv^{k}))}$.
\end{cor}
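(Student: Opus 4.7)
The density claim for each individual Wilson loop $\ptrans_{\crv^{k}}^{(\infty)}(1)$ is immediate from the preceding corollary applied to $\crv^{k}$, so the substantive content is the joint independence. By a standard Dynkin/$\pi$-$\lambda$ argument it suffices to prove mutual independence of every finite subfamily, so fix $n$ and consider $\crv^{1},\dots,\crv^{n}$.

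My plan is to carry out the strategy of Theorem~\ref{thm:wilson-indep-gauss-rect} jointly in $k$. Recall that each $\ptrans_{\crv^{k}}^{(\infty)}(1)=U_{k}^{(\infty)}(1)$ arises as the $t=1$ value of the RDE $dU_{k}=-U_{k}\,d\bB^{D(\crv^{k}),(\infty)}$, whose driver is built from the $\fg$-valued Gaussian process $F_{t}^{D(\crv^{k}),(\infty)}=W(D(\crv^{k})_{t})$ by conjugation with the whisker parallel transport $\ptrans_{\crv^{k,1}}^{(\infty)}$. The key observation is that $F^{D(\crv^{k}),(\infty)}$ is $\tau(D(\crv^{k}))$-measurable, so the hypothesis that the interiors $D(\crv^{k})^{\circ}$ are pairwise disjoint together with the white-noise property yields that the family $\{F^{D(\crv^{k}),(\infty)}\}_{k=1}^{n}$ is mutually independent, each term having an $\mathrm{Ad}$-invariant distribution.

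The plan is then to apply Lemma~\ref{thm:gMg-indep} inductively in $k$: at step $k$ take $M=F^{D(\crv^{k}),(\infty)}$, $g=\ptrans_{\crv^{k,1}}^{(\infty)}$, and $\Sigma$ to be the $\sigma$-algebra generated by the previously constructed $\bB^{D(\crv^{l}),(\infty)}$ for $l<k$ together with $W\upharpoonright(\R^{2}\setminus D(\crv^{k}))$. The $\mathrm{Ad}$-invariance and independence of $M$ from $\Sigma$ then force the conjugated increment $g^{-1}M g$ (hence, after integration, $\bB^{D(\crv^{k}),(\infty)}$) to be independent of $\Sigma$. Iterating yields mutual independence of $\{\bB^{D(\crv^{k}),(\infty)}\}_{k=1}^{n}$; since each $U_{k}^{(\infty)}(1)$ is a measurable function of the corresponding $\bB^{D(\crv^{k}),(\infty)}$ alone through the It\^o--Lyons map $\bpi_{(\cV)}$, the Wilson loops $\ptrans_{\crv^{k}}^{(\infty)}(1)$ are mutually independent with the stated densities.

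The main obstacle I anticipate is verifying the measurability hypothesis in each invocation of Lemma~\ref{thm:gMg-indep}, namely that $\ptrans_{\crv^{k,1}}^{(\infty)}$ is $\Sigma$-measurable. In axial gauge the whisker transport depends on $W$ along the one-dimensional ``shadow'' strip stretching horizontally from the $x_{2}$-axis to $\crv^{k,1}$, and one must choose the whiskers so that this strip avoids $D(\crv^{k})$; Lemma~\ref{thm:ptrans-finite-lassos-prod} allows one to reshape lassos as needed to achieve this. This bookkeeping is the delicate step; once carried out, the inductive application of Lemma~\ref{thm:gMg-indep} and the RDE continuity from Theorem~\ref{thm:fullRDE-conti} combine to give the corollary.
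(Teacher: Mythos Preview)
The paper states this corollary without proof, evidently regarding it as immediate from the preceding corollary (which already gives, for each $k$, that $\ptrans_{\crv^{k}}^{(\infty)}(1)$ is independent of $\tau(\R^{2}\setminus D(\crv^{k}))$ and has density $Q_{\Leb(D(\crv^{k}))}$).

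Your proposal is considerably more explicit than what the paper supplies, but it follows the same underlying mechanism: you are essentially re-running the proof of Theorem~\ref{thm:wilson-indep-gauss-rect} jointly in $k$, iterating Lemma~\ref{thm:gMg-indep} to pass from the manifestly independent family $\{F^{D(\crv^{k}),(\infty)}\}$ (white noise on disjoint domains) to the conjugated drivers $\{\bB^{D(\crv^{k}),(\infty)}\}$, and then invoking the It\^o--Lyons map. This is the right strategy, and it is the only honest way to close the gap left by the paper's one-line treatment: statement~(i) alone gives $\ptrans_{\crv^{k}}^{(\infty)}(1)\perp\tau(\R^{2}\setminus D(\crv^{k}))$, but $\ptrans_{\crv^{l}}^{(\infty)}(1)$ is \emph{not} a priori $\tau(\R^{2}\setminus D(\crv^{k}))$-measurable (its whisker shadow may meet $D(\crv^{k})$), so pairwise independence does not follow automatically.

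You correctly identify this whisker-measurability bookkeeping as the delicate point. In the paper's setup the curve $\crv^{1}$ in Section~\ref{sec:axial-gauge} is the \emph{left} boundary of $D\in\rectlikedomain_{1}$, so its axial-gauge shadow lies in $\R^{2}\setminus D$ and the hypothesis of Lemma~\ref{thm:gMg-indep} is met with $\Sigma=\tau(\R^{2}\setminus D)$; your inductive $\Sigma$ should likewise be taken large enough to contain all the relevant shadows, which is automatic once one enlarges it to $\tau(\R^{2}\setminus D(\crv^{k}))$. With that choice the induction goes through without needing to reshape whiskers via Lemma~\ref{thm:ptrans-finite-lassos-prod}. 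In short: your approach is correct and is exactly what the paper's omitted proof would have to contain.
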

{}

Our results are summarized as follows:
\begin{thm}
\label{thm:main-summ}Let $\crv^{1},\crv^{2},...\in\Crv_{\infty}\cap\Lasso$,
and suppose that $D(\crv^{k})\in\rectlikedomain_{1,{\rm fin}}$ for
all $k\in\N$. Then there exists a probability space $(\samples,\Prob)$
and a sequence of $\Omega^{1}(\R^{2},\fg)$-valued random variables
$A^{(n)}$ such that 
\[
\Prob\Bigl[\forall i\in\N,\ \ptrans_{\crv^{i}}:=\lim_{n\to\infty}\ptrans_{\crv^{i},A^{(n)}}\text{ (uniform) }\in C([0,1],G)\Bigr]=1,\ 
\]
and the set of the $G$-valued random variables $\{\ptrans_{\crv^{i}}\}_{i\in\N}$
obeys the law the Wilson loops in  the YM theory on $\R^{2}$.
\end{thm}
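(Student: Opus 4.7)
The plan is to assemble the results proved in the preceding sections with essentially no new analytic work. Let $(\samples,\Prob)$ be the probability space carrying the $\fg$-valued white noise $W$ on $\R^{2}$ introduced in Section 3. For each $m\ge-1$, set $W^{(m)}:=\bS_{m}W$ and define $A^{(m)}\in\Omega^{1}(\R^{2},\fg)$ by the axial-gauge formula (\ref{eq:axial-gauge-fix}) with $F_{12}=W^{(m)}$; this produces a bona fide $\Omega^{1}$-valued random variable. By hypothesis the set $\Gamma:=\{\crv^{i}:i\in\N\}\subset\Crv_{\infty}\cap\Lasso$ is countable, so Theorem \ref{thm:aeRoughPathConverge} supplies an increasing function $n:\N\to\N$ such that, with probability one, the approximate parallel transports $\ptrans_{\crv^{i},A^{(n(k))}}=\ptrans_{\crv^{i}}^{(n(k))}$ converge uniformly on $[0,1]$ to limits $\ptrans_{\crv^{i}}\in C([0,1],G)$ simultaneously for every $i\in\N$. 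Setting $A^{(k)}:=A^{(n(k))}$ after reindexing gives the first assertion of the theorem.

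For the second assertion, I would proceed in two steps. First, fix any finite subcollection $\crv^{i_{1}},\ldots,\crv^{i_{m}}$. Using the assumption $D(\crv^{i_{l}})\in\rectlikedomain_{1,{\rm fin}}$ and Lemma \ref{thm:ptrans-finite-lassos-prod}, I can decompose each lasso $\crv^{i_{l}}$ into a finite concatenation of lassos whose enclosed domains are elements of $\rectlikedomain_{1}$, and refine all these decompositions simultaneously so that the resulting family of domains has pairwise disjoint interiors across all indices. This expresses each $\ptrans_{\crv^{i_{l}}}(1)$ as a deterministic word in the holonomies attached to the refined disjoint lasso family. Second, I apply the last corollary before Theorem \ref{thm:main-summ} to that refined disjoint family: its holonomy-valued random variables are mutually independent with densities $Q_{\Leb(\cdot)}$ given by the heat kernel on $G$. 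This is precisely the axiomatization of the YM Wilson-loop law on $\R^{2}$ in axial gauge in Driver \cite{Dri89} and Sengupta \cite{Sen92}, so the joint law of $\{\ptrans_{\crv^{i}}(1)\}_{i\in\N}$ matches the Wilson-loop law of the YM theory.

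The main obstacle is not in this final assembly, which is largely bookkeeping, but upstream: the almost-sure, uniform, rough-path convergence asserted in Theorem \ref{thm:aeRoughPathConverge} relies on (i) uniform $L^{q}$ bounds for $\bX_{\crv}^{(j)}$ in the weak geometric $\rpHol$-H\"older topology (Lemma \ref{thm:roughPathBoundsInLp}) and (ii) pointwise $L^{p}$-Cauchy convergence of $(X^{(j)}_{s,t},\bbX^{(j)}_{s,t})$ (Proposition \ref{thm:pointwise-Lp-convergence}), both of which required the Besov-space estimates of Section 8 for $\cE_{\crv}\bOne_{[s,t]}$ and the delicate Wick-type computation of the second-level term in Section 9. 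The one subtlety in the present step is that the YM law characterization is usually stated for a prescribed axial-gauge defining direction $\theta=0$; because the theorem statement is gauge-invariant through its formulation in terms of holonomies of loops, and all conjugacy classes are accessible within a single gauge choice via Lemma \ref{thm:ptrans-finite-lassos-prod}, this does not cause an issue, and so the reduction to the disjoint case via the corollary closes the argument.
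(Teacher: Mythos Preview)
Your proposal is correct and matches the paper's approach: the paper presents Theorem \ref{thm:main-summ} purely as a summary statement with no separate proof, intending it to follow directly from Theorem \ref{thm:aeRoughPathConverge} together with Theorem \ref{thm:wilson-indep-gauss-rect} and its two corollaries, and your write-up is exactly this assembly. Your extra remark about refining overlapping domains into a common disjoint family (and including the refined lassos in the countable set $\Gamma$ fed to Theorem \ref{thm:aeRoughPathConverge}) makes explicit a step the paper leaves tacit, and is the right way to handle the general case where the $D(\crv^{i})$ are not assumed pairwise disjoint.
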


\section{Open problems}
\begin{conjecture}
\label{conj:main}Let $\Crv_{*}$ denote one of $\Crv_{\infty}$,
$\Crv_{\Rot}$, $\Crv$ and $C^{1\hvar}$ (continuous curves of bounded
variation). There exists a probability space $(\samples,\Prob)$ and
a sequence of $\Omega^{1}(\R^{2},\fg)$-valued random variables $A^{(n)}$
such that 
\[
\Prob\Bigl[\forall\crv\in\Crv_{*},\ \ptrans_{\crv}:=\lim_{n\to\infty}\ptrans_{\crv,A^{(n)}}\text{ (uniform) }\in C([0,1],G)\Bigr]=1,\ 
\]
and the set of the holonomy variables $\{\ptrans_{\crv}(1):\ \crv\in\Crv_{*}\cap\Lasso\}$
obeys the law the Wilson loops in the YM theory on $\R^{2}$.
\end{conjecture}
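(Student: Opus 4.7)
The plan is to upgrade Theorem \ref{thm:aeRoughPathConverge} from a fixed countable family of curves to the entire class $\Crv_{*}$ by means of a joint continuity / density argument on curve space. Fix a metric $d_{*}$ on $\Crv_{*}$ adapted to the class (e.g.\ $C^{\infty}$-topology on bounded sets for $\Crv_{\infty}$, $\pvar$-metric with $p<2$ on bounded subsets of $C^{1\hvar}$), and choose a countable dense subset $\Gamma\subset\Crv_{*}$. Theorem \ref{thm:aeRoughPathConverge} produces a single event $\samples_{0}$ with $\Prob(\samples_{0})=1$ and a rapidly increasing $n:\N\to\N$ such that on $\samples_{0}$, for every $\crv\in\Gamma$, $\lift(\ptrans_{\crv,A^{(n(k))}})\to\hat{\ptrans}_{\crv}^{(\infty)}$ in $C^{\rpHol\hHol}([0,1],G^{(2)}(\Mat))$.

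The central additional ingredient needed is a \emph{stochastic equicontinuity} statement of the form: there exist $\al,\be>0$ and $q$ large such that, for every bounded subset $K\subset\Crv_{*}$,
\begin{equation*}
\sup_{n\ge-1}\,\Ex\Bigl[d_{{\rm CC}}\bigl(\bX_{\crv;s,t}^{(n)},\bX_{\crv';s,t}^{(n)}\bigr)^{q}\Bigr]\le C_{K}\,d_{*}(\crv,\crv')^{\al q}\,|t-s|^{\be q},\qquad\crv,\crv'\in K.
\end{equation*}
This is proved by repeating the $L^{2}$ computations of Sections 7--8 \emph{with $\cE_{\crv}\bOne_{[s,t]}$ replaced by $\cE_{\crv}\bOne_{[s,t]}-\cE_{\crv'}\bOne_{[s,t]}$} and similarly for $f_{\crv,r}^{j}-f_{\crv',r}^{j}$, relying on a Besov estimate of the type
\begin{equation*}
\bigl\Vert\cE_{\crv}\bOne_{[s,t]}-\cE_{\crv'}\bOne_{[s,t]}\bigr\Vert_{B_{2,\infty}^{\besovorder}}\lesssim d_{*}(\crv,\crv')^{\al}(t-s)^{1/2-\besovorder},
\end{equation*}
which in turn reduces, via Lemmas \ref{thm:BesovNorm(p,infty)OfRectangle}--\ref{thm:B2infty^{1/2}(R^2)cond-leng}, to controlling the symmetric difference of the support regions of $\cE_{\crv}\bOne_{[s,t]}$ and $\cE_{\crv'}\bOne_{[s,t]}$. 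Combining this two-parameter moment bound with a two-parameter Kolmogorov--Chentsov criterion and Theorem \ref{thm:roughPath-Lq-convergence-G2} applied on $K\times[0,1]$ yields, after passing to a common rapid subsequence, an a.s.\ uniform-in-$(\crv,n)$ modulus of continuity in the $\crv$-variable on every bounded $K$. By density of $\Gamma$ and the joint continuity of the It\^o--Lyons map (Theorem \ref{thm:fullRDE-conti}), the limits $\hat{\ptrans}_{\crv}^{(\infty)}$ extend continuously from $\Gamma$ to all of $\Crv_{*}$, and $\lift(\ptrans_{\crv,A^{(n(k))}})\to\hat{\ptrans}_{\crv}^{(\infty)}$ holds simultaneously for every $\crv\in\Crv_{*}$ on a full-measure event. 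The YM Wilson loop law on all of $\Crv_{*}\cap\Lasso$ then follows from Theorem \ref{thm:wilson-indep-gauss-rect} for lassos with $D(\crv)\in\rectlikedomain_{1,{\rm fin}}$ by continuity, provided one also shows that the latter class is dense in $\Crv_{*}\cap\Lasso$ in $d_{*}$.

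The main obstacle is the uniform-in-$\crv$ Besov bound, and this is where the difficulty of the conjecture concentrates: all existing estimates (e.g.\ Lemma \ref{thm:cE_ch-well-def-1}, Lemmas \ref{thm:|<S_jcEchi_[s,r1],f_{r2}>|<C}--\ref{thm:|int_s^tdr|S_jdelta|<|}) carry prefactors built from $\Rot(\crv)$, $\Vert\crv_{1}\Vert_{L^{\infty}}$, $\Vert\dot\crv_{2}\Vert_{L^{\infty}}$, $\Vert\ddot\crv_{2}\Vert_{L^{\infty}}$ and $\Vert\cE_{\crv}\Vert_{\infty\infty}$, none of which is controlled when $\Crv_{*}$ is as large as $\Crv$ or $C^{1\hvar}$. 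For $\Crv_{*}=\Crv_{\infty}$ one may plausibly push through the argument by restricting to subsets where these quantities stay bounded, but extending to $C^{1\hvar}$ appears to require substantially new estimates (perhaps treating $\cE_{\crv}\bOne_{[s,t]}$ via Green's-theorem style integration by parts so the bounds depend only on $\pvar$-variations of $\crv$). A secondary, related obstacle is to show that, in the $d_{*}$-topology chosen, simple lassos with rectangle-like domains are dense in all lassos of $\Crv_{*}$, which is needed to transfer the Wilson-loop law from Theorem \ref{thm:wilson-indep-gauss-rect} to every $\crv\in\Crv_{*}\cap\Lasso$.
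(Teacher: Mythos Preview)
The statement you are attempting to prove is labelled \emph{Conjecture} in the paper and appears in the section ``Open problems.'' The paper does not give a proof; it explicitly leaves this open, remarking only that the conjecture ``seems plausible for $\Crv_{*}=\Crv_{\infty},\Crv_{\Rot}$, but the plausibility is obscurer for $\Crv_{*}=\Crv,\ C^{1\hvar}$.'' So there is no proof in the paper to compare your proposal against.

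That said, your outline is a sensible strategy and you have correctly located the genuine difficulty. The paper's own commentary matches your diagnosis: all of the quantitative estimates in Sections~8--9 carry curve-dependent constants ($\Rot(\crv)$, $\Vert\cE_{\crv}\Vert_{\infty\infty}$, sup-norms of $\crv_{1}$, $\dot\crv_{2}$, $\ddot\crv_{2}$), and these are precisely what block a uniform-in-$\crv$ Kolmogorov argument over an uncountable class. Your observation that the program might go through on bounded subsets of $\Crv_{\infty}$ but would require new ideas for $C^{1\hvar}$ is exactly the distinction the paper draws. Note also that the paper raises, as a separate open problem (Problem~12.2), whether the map $\crv\mapsto\ptrans_{\crv}$ has any rough-path continuity at all --- and states ``thus far, we have no positive evidence of this continuity.'' Your approach assumes such continuity can be established as an intermediate step, so you are in effect proposing to resolve Problem~12.2 on the way to Conjecture~12.1; the paper regards both as open.
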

{}

This conjecture seems plausible for $\Crv_{*}=\Crv_{\infty},\Crv_{\Rot}$,
but the plausibility is obscurer for $\Crv_{*}=\Crv,\ C^{1\hvar}$.
If the conjecture is the case, the following question will arise:
\begin{problem}
Does the mapping $\Crv_{*}\ni\crv\mapsto\ptrans_{\crv}$ given in
Conj. \ref{conj:main} have any continuity in the sense of rough paths? 
\end{problem}
{}

This continuity is desirable to establish the notion of `rough gauge
fields.' However, thus far, we have no positive evidence of this continuity.

The method of \cite{Dri89,Sen92,Sen93,Sen97} strongly depend on special
gauge fixing (axial gauge in \cite{Dri89}, radial gauge in \cite{Sen92,Sen93,Sen97}),
and seem difficult to be generalized to other gauges; Generally, the
notions of gauge transformation and gauge symmetry are usually defined
on the \emph{classical} level (in terms of differential geometry),
and the rigorous treatment of those notions is more difficult in the
\emph{quantum} level. Although in this paper we confined ourselves
to the case of axial gauge, we conjecture that our method can be generalized
to other gauges, simply because a quantum gauge field can be approximated
by a classical (smooth) gauge fields in our method.

\section*{Acknowledgement}

The author thanks Professor Yuzuru Inahama of Kyushu University for
valuable advices. 

%\bibliographystyle{alpha}
%\bibliography{/home/leibniz/art/ybib}
\providecommand{\noopsort}[1]{}\providecommand{\singleletter}[1]{#1}%

%<index label>

%<index label dense>
\end{document}